\newtheorem{theorem}{Theorem}[section]
\newtheorem{lemma}[theorem]{Lemma}
\newtheorem{corollary}[theorem]{Corollary}
\newcommand{\colourable}{properly quasi-transitively colourable}
\newcommand{\uniquelycolourable}{uniquely quasi-transitively colourable}
\title{An Analogue of Quasi-Transitivity for Edge-Coloured Graphs}
\begin{document}

\maketitle

\begin{center}
	\large{Christopher Duffy\footnote{christopher.duffy@usask.ca, Research supported by the Natural Science and Engineering Research Council of Canada}, Todd Mullen}
\end{center}

\begin{center}
Department of Mathematics and Statistics, University of Saskatchewan, CANADA.\\
\end{center}

\begin{abstract}
We extend the notion of quasi-transitive orientations of graphs to 2-edge-coloured graphs.
By relating quasi-transitive $2$-edge-colourings to an equivalence relation on the edge set of a graph, we classify those graphs that admit a quasi-transitive $2$-edge-colouring. 
As a contrast to Ghouil\'{a}-Houri's classification of quasi-transitively orientable graphs as comparability graphs, we find quasi-transitively $2$-edge-colourable graphs do not admit a forbiddden subgraph characterization.
Restricting the problem to comparability graphs, we show that the family of uniquely quasi-transitively orientable comparability graphs is exactly the family of comparabilty graphs that admit no quasi-transitive $2$-edge-colouring.
\end{abstract}

\section{Introduction and Preliminaries}
Using the definition of graph colouring based on homomorphism, one may define a notion of proper vertex colouring for oriented graphs and $2$-edge-coloured graphs that respectively takes into account the orientation of the arcs and the colours of the edges.
In surveying the literature on \emph{oriented colourings} and \emph{$2$-edge-coloured colourings}, one finds seemingly endless instances of similar techniques and results.
For example, in both cases, one obtains an upper bound of $80$ for the respective chromatic number of oriented and $2$-edge-coloured planar graphs by constructing a universal target for homomorphisms of orientations/$2$-edge-colourings of planar graphs \cite{AM98,RASO94}.
For further details of oriented colouring and $2$-edge-coloured colourings see  \cite{SO15} and \cite{MOPRS10}.

A key feature of oriented colourings is that vertices at the end of a directed $2$-path (i.e., \emph{a $2$-dipath}) must receive different colours.
So one may bound the oriented chromatic number of an oriented graph $\overrightarrow{G}$ from below by computing the \emph{$2$-dipath chromatic number}, the chromatic number of $\overrightarrow{G}^2$, where $\overrightarrow{G}^2$ is the undirected graph formed from $\overrightarrow{G}$ by first adding an edge between vertices at the end of an induced $2$-dipath and then removing the orientation from the arcs of $\overrightarrow{G}$ \cite{MAYO12,KMY09}.
From this, one observes that if $\overrightarrow{G}$ has no induced $2$-dipaths, then the $2$-dipath chromatic number of $\overrightarrow{G}$ is equal to the chromatic number of the simple graph underyling $\overrightarrow{G}$.
Oriented graphs with no induced $2$-dipaths are studied in the literature under the name \emph{quasi-transitive digraphs} \cite{B95}.

In \cite{B95}, a recursive method is used to characterize the graphs that can have their edges oriented into a quasi-transitive digraph.
In \cite{H12}, a related term is defined, \emph{$k$-quasi-transitive digraphs}, in which vertices must be adjacent if there exists a directed path of length $k$ connecting them. Much of the research on quasi-transitive digraphs and $k$-quasi-transitive digraphs have focused on \emph{strong} digraphs. A \emph{strong} digraph is a graph in which, for every pair of vertices, there exists a directed path containing those two vertices. A characterization of strong $3$-quasi-transitive digraphs is given in \cite{H10}.

We will opt for the name \emph{quasi-transitive orientations} instead of \emph{quasi-transitive digraphs} to relate more closely to the following two terms.
A graph $G$ is \emph{quasi-transitively orientable} if there exists a quasi-transitive orientation for which $G$ is the underlying graph.
A graph $G$ is \emph{uniquely quasi-transitively orientable} if there exist exactly two quasi-transitive orientations for which $G$ is the underlying graph.
We use the word ``unique" to describe the case in which there are two possible orientations because one can be created by reversing the direction of every arc in the other.

Quasi-transitive orientations arise only as certain orientations of comparability graphs, which are graphs formed from making adjacent related elements of a partial order \cite{GH62}. We include this theorem now for ease of reference.

\begin{theorem} \cite{GH62} \label{thm:ghouila}
	A graph $G$ is quasi-transitively orientable if and only if $G$ is a comparability graph.
\end{theorem}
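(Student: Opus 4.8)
The plan is to prove the two directions separately, with the reverse implication routine and the forward implication carrying all of the content. For the easy direction, recall that a comparability graph is precisely a graph admitting a \emph{transitive} orientation, i.e.\ an orientation in which arcs $u \to v$ and $v \to w$ force the arc $u \to w$; this equivalence is just the translation between a strict partial order and its associated digraph. Any transitive orientation is automatically quasi-transitive: a directed $2$-path $u \to v \to w$ yields the arc $u \to w$, so its endpoints are adjacent and no induced $2$-dipath occurs. Hence every comparability graph is quasi-transitively orientable, and this direction needs only the observation that transitivity is formally stronger than quasi-transitivity.

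For the hard direction, I suppose $\overrightarrow{G}$ is a quasi-transitive orientation of $G$ and aim to produce a transitive orientation of $G$, possibly a different one. My main tool is the \emph{forcing relation} $\Gamma$ on the arcs of $G$: declare $(a,b) \mathrel{\Gamma} (a,c)$ whenever $bc \notin E(G)$ (shared tail, non-adjacent heads) and $(b,a) \mathrel{\Gamma} (c,a)$ whenever $bc \notin E(G)$ (shared head, non-adjacent tails), and let the \emph{implication classes} be the equivalence classes of the reflexive--transitive closure $\Gamma^{*}$. The key observation is that quasi-transitivity forces $\overrightarrow{G}$ to be closed under $\Gamma$: if $a \to b$ lies in $\overrightarrow{G}$ and $bc \notin E(G)$, then the arc between $a$ and $c$ must be $a \to c$, since the alternative $c \to a$ would create the directed $2$-path $c \to a \to b$ with non-adjacent endpoints $b,c$, contradicting quasi-transitivity; the shared-head case is symmetric. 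Consequently $\overrightarrow{G}$ is a union of implication classes.

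From this I deduce that no implication class is self-paired. Writing $A^{-1}$ for the class of reversed arcs, the fact that $\overrightarrow{G}$ contains exactly one arc per edge and is a union of classes means that whenever $A \subseteq \overrightarrow{G}$, its reverse $A^{-1}$ is disjoint from $\overrightarrow{G}$; and since the two classes of each reverse pair are proper or improper together, $A \cap A^{-1} = \emptyset$ and $A \neq A^{-1}$ for \emph{every} implication class. This is exactly the Gilmore--Hoffman/Gallai condition characterising comparability graphs, so $G$ is a comparability graph and the proof is complete.

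I expect the main obstacle to be this final step: passing from ``every implication class is proper'' to ``$G$ admits a transitive orientation.'' It cannot be finished by orienting along $\overrightarrow{G}$ itself, since the directed triangle is quasi-transitive but not transitive; instead one assembles a transitive orientation by choosing one class from each reverse pair $\{A, A^{-1}\}$ and arguing, via Gallai's Triangle Lemma on the internal structure of a single implication class, that such a union is transitive. Reproving this assembly step in full, rather than citing it, is where the real combinatorial work lies. An alternative route that localises the difficulty differently is to show, using the same $2$-dipath argument, that the strong components of $\overrightarrow{G}$ are modules whose quotient is transitive, and then to induct, invoking closure of comparability graphs under substitution and treating strong (prime) quasi-transitive digraphs as the base case.
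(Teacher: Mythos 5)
The paper does not actually prove this statement: Theorem~\ref{thm:ghouila} is quoted from Ghouil\'{a}-Houri \cite{GH62} purely for ease of reference, so there is no internal proof to compare yours against. Judged on its own merits, your outline is sound and follows the standard modern route (Gallai's implication-class machinery, as in Golumbic's treatment). The easy direction is exactly right: transitivity is formally stronger than quasi-transitivity. Your key observation in the hard direction is also correct and is the genuine content of your argument: if $a \to b$ is an arc and $bc \notin E(G)$ while $ac \in E(G)$, then orienting $c \to a$ would create the $2$-dipath $c \to a \to b$ with non-adjacent endpoints, so a quasi-transitive orientation is closed under the forcing relation $\Gamma$, hence is a union of implication classes, hence no class is self-paired. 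The one caveat is that your final step invokes the Gilmore--Hoffman/Gallai theorem (properness of every implication class implies the existence of a transitive orientation), a result of essentially the same depth as the theorem being proved; you flag this honestly and correctly locate the real combinatorial work there (Gallai's triangle lemma and the assembly of one class per reverse pair). As a self-contained proof it is therefore incomplete at precisely the point you identify, but as a reduction it is correct and non-circular, since that characterization is proved independently of quasi-transitivity. Your alternative route --- strong components as modules with transitive quotient, plus closure of comparability graphs under substitution --- is also viable, and is in fact closer in spirit to the recursive characterization of quasi-transitive digraphs in \cite{B95} that the paper cites.
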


Unsurprisingly, one may follow a similar line of thought for proper colourings of $2$-edge-coloured graphs.
In doing so, one finds that vertices at the ends of a $2$-path in which one edge is red and the other is blue (i.e., an alternating $2$-path) must receive different colours.
To date, however, no work has been done in the study of $2$-edge-coloured graphs with no induced alternating $2$-path.
In this work we undertake the first such study of these objects and find a full classification of those graphs that admit a $2$-edge-colouring so as to have no induced alternating $2$-path.
For this end, we define the following.

A \emph{quasi-transitive $2$-edge-colouring} of a graph $G$ is a mapping $c:E(G) \to \{R,B\}$ so that for all pairs $xy,yz \in E(G)$, with $c(xy)\neq c(yz)$, we have $xz \in E(G)$.
A quasi-transitive $2$-edge-colouring is \emph{trivial} when $c(e) = c(f)$ for all $e,f \in E(G)$.
In other words, the edge colouring is \emph{monochromatic}.
As trivial quasi-transitive $2$-edge-colourings exist for all graphs, all graphs are quasi-transitively colourable. 
A graph $G$ is \emph{\colourable} if there exists a nontrivial quasi-transitive $2$-edge-colouring of $G$. 
A graph $G$ is \emph{\uniquelycolourable} if there exist exactly 2 nontrivial quasi-transitive $2$-edge-colourings of $G$. 
We use the term unique for graphs with exactly two colourings because the existence of a single nontrivial quasi-transitive $2$-edge-colouring implies the existence of a second, created by interchanging all of the colours. 
Examples of quasi-transitive $2$-edge-colourings are shown in Figure~\ref{fig:QT2EC}.

\begin{figure}[H]
	\centering	
	\begin{tikzpicture}[-,-=stealth', auto,node distance=1.5cm,
		thick,scale=0.8, main node/.style={scale=0.8,circle,draw,font=\sffamily\Large\bfseries}]
		
		\node[main node] (1) 					    {};			
		\node[main node] (2)  [right = 2cm of 1]        {};
		\node[main node] (3)  [below right = 2cm and 1cm of 2]        {};  
		\node[main node] (4)  [below left = 2cm and 1cm of 3]	      {};				
		\node[main node] (5)  [left = 2cm of 4]        {};
		\node[main node] (6)  [above left = 2cm and 1cm of 5]        {};	
		
		\node[main node] (7)  [right = 4cm of 2]					    {};			
		\node[main node] (8)  [right = 2cm of 7]        {}; 
		\node[main node] (9)  [below right = 2cm and 1cm of 8]        {};
		\node[main node] (10)  [below left = 2cm and 1cm of 9]        {};	
		
		\node[main node] (11)  [left = 2cm of 10]        {};  
		\node[main node] (12)  [above left = 2cm and 1cm of 11]        {};

		
		\draw[dotted]
		(1) -- (2)
		(2) -- (3)
		(2) -- (6)
		(5) -- (4)
		(1) -- (4)
		(2) -- (5)
		(3) -- (4)
		(4) -- (6)
		
		(7) -- (8)
		(8) -- (9)
		(9) -- (10)
		(10) -- (11)
		(11) -- (12)
		(7) -- (12)
		(7) -- (11)
		(8) -- (12)
		(9) -- (11)
		(10) -- (12);
		

		\draw[]
		(1) -- (5)
		(3) -- (5)
		(1) -- (6)
		(5) -- (6);
		
		
	\end{tikzpicture}
	\caption{Quasi-transitive $2$-edge-colourings shown on two graphs with $R$ edges shown as dotted lines and $B$ edges shown as full lines (note that the second graph only has trivial quasi-transitive $2$-edge-colourings)}
	\label{fig:QT2EC}	
\end{figure}
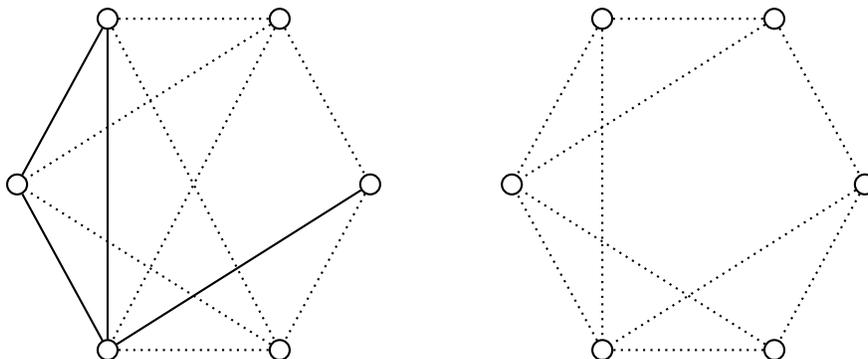	
	
Given a set of edges $E$, we denote by $V(E)$ the set of endpoints of edges in $E$. 
Also, we denote by $G[E]$ the graph with edge set $E$ and vertex set $V(E)$. 
For other graph theoretic terminology not defined herein, we refer the reader to \cite{bondy}.

Our work proceeds as follows.
In Section \ref{sec:SECHF1F2} we introduce an equivalence relation on the set of edges of a graph that partitions the edges into subsets of edges that must be assigned the same colour in any quasi-transitive $2$-edge-colouring.
With the aim of classifying those graphs that are \colourable, we study the structure of the subgraphs induced by the equivalence classes.
Our work yields a full classification of \colourable\ graphs and \uniquelycolourable\ graphs.
In doing so, we provide an infinite family of \uniquelycolourable\ graphs.

In Section \ref{sec:3equivClasses} we provide a characterization of those graphs for which the equivalence relation yields exactly three equivalence classes.
In doing so, we provide examples of infinite families of such graphs.

In Section \ref{sec:QTOrient} we introduce the notion of uniquely quasi-transitively orientable graphs.
By restricting our work to comparability graphs, we find that the equivalence relation introduced for the study of \colourable\ graphs yields insight into those comparability graphs for which there are exactly two quasi-transitive orientations.
In particular, we show that the family of uniquely quasi-transitively orientable graphs is exactly the family of comparability graphs that admit only the trivial quasi-transitive $2$-edge-colouring.

\section{Edge Partitions and Quasi-Transitive $2$-Edge-Colourings}\label{sec:SECHF1F2}

In this section, we show the respective sets of red and blue edges in a quasi-transitive $2$-edge-colouring arise as unions of equivalence classes under an equivalence relation on the edges of a graph. 
From this observation, we show the family of \colourable\ graphs are exactly those graphs with at least two equivalence classes with respect to this equivalence relation.
We begin with a number of results leading up to the definition of this equivalence relation.

Let $G$ be a graph and let $e \in E(G)$.
Let $\mathcal{S}_{e}$ be the set comprising all subsets $S$ of $E(G)$ with the following properties:
\begin{enumerate}
\item $e \in S$; and 
\item for all $d\notin S$, there is no induced copy of $P_3$ in $G$ comprising $d$ and an edge of $S$.
\end{enumerate}

	\begin{theorem}\label{thm:intersection}
		The set $\mathcal{S}_e$ is closed with respect to intersection.
	\end{theorem}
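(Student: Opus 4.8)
The plan is to verify the two defining conditions of membership in $\mathcal{S}_e$ directly for the intersection of two arbitrary members. The conditions defining $\mathcal{S}_e$ are closure-type conditions---condition (2) says precisely that $S$ is closed under passing from an edge of $S$ to any edge that forms an induced $P_3$ with it---so one expects intersection-closure to follow formally, and the proof should amount to making this precise while keeping the quantifier structure of condition (2) straight.

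Concretely, I would fix $S_1, S_2 \in \mathcal{S}_e$, set $T = S_1 \cap S_2$, and check conditions (1) and (2) for $T$. Condition (1) is immediate: $e \in S_1$ and $e \in S_2$, so $e \in T$. For condition (2) I would argue by contraposition. Suppose some edge $d$ forms an induced copy of $P_3$ in $G$ together with an edge $s \in T$; the goal is to show $d \in T$. Since $T \subseteq S_1$, the edge $s$ lies in $S_1$, so $d$ and $s \in S_1$ form an induced $P_3$; were $d \notin S_1$, this would violate condition (2) for $S_1$, hence $d \in S_1$. The identical argument with $S_2$ gives $d \in S_2$, so $d \in S_1 \cap S_2 = T$. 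This shows that no edge outside $T$ can form an induced $P_3$ with an edge of $T$, which is exactly condition (2).

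I do not anticipate a genuine obstacle here, as the result is essentially a formal consequence of the definition. The one point requiring care is the direction of the implication in condition (2): it constrains edges \emph{outside} the set in terms of edges \emph{inside} it, so one must note that the common edge $s$ remains a member of each $S_i$ (because $T \subseteq S_i$), and that it is this membership of $s$, rather than any membership of $d$, that activates the closure property of $S_i$. The same reasoning applies verbatim to an arbitrary family $\{S_\alpha\} \subseteq \mathcal{S}_e$: their intersection contains $e$, and any edge $d$ forming an induced $P_3$ with an edge of $\bigcap_\alpha S_\alpha$ lies in every $S_\alpha$ and hence in the intersection. Thus $\mathcal{S}_e$ is in fact closed under arbitrary intersections, which I would record, since it guarantees a unique minimal member of $\mathcal{S}_e$ containing $e$---presumably the object from which the advertised equivalence relation will be built.
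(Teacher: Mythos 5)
Your proof is correct and is essentially the paper's own argument: both rest on the observation that an edge forming an induced $P_3$ with an edge of $S_1 \cap S_2$ must, by condition (2) applied to each $S_i$ separately, lie in both sets and hence in the intersection; the paper merely phrases this as a contradiction while you verify the two conditions directly. Your closing remark that the same reasoning gives closure under arbitrary intersections (and hence a unique minimal member) is a valid mild strengthening, matching what the paper derives in its subsequent corollary.
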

	
	\begin{proof}
		Let $G$ be a graph and let $e \in E(G)$.
		Toward a contradiction, suppose  $S$ and $T$ are two sets in $\mathcal{S}_e$ and  $S \cap T$ is not in $\mathcal{S}_e$.
		Since $e$ is in $S \cap T$ and $S \cap T$ is not in $\mathcal{S}_e$, there must exist some induced copy of $P_3$ containing an edge $d$ in $S \cap T$ and an edge $f$ in $E(G) \setminus (S \cap T)$.
		Therefore, any set in $\mathcal{S}$ that contains every edge in $S \cap T$ must contain $f$ as well.
		This is a contradiction, as $f$ is in at most one of $S$ and $T$.
		Therefore, the intersection of any two sets in $\mathcal{S}_e$ is a set in $\mathcal{S}_e$.
	\end{proof}

	\begin{corollary} \label{lem:fewestunique}
		The element of $\mathcal{S}_{e}$ with the fewest number of edges is unique.
	\end{corollary}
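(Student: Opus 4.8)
The plan is to deduce uniqueness directly from the intersection-closure established in Theorem~\ref{thm:intersection}, via a standard minimality argument. I would argue by contradiction: suppose that $\mathcal{S}_e$ contains two \emph{distinct} elements $S$ and $T$, each attaining the minimum cardinality $m$ among members of $\mathcal{S}_e$.

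First I would invoke Theorem~\ref{thm:intersection} to conclude that $S \cap T$ is itself a member of $\mathcal{S}_e$. In particular, $S \cap T$ is a candidate whose cardinality is constrained by the minimality of $m$, so $|S \cap T| \geq m$. The next step is the observation that since $S$ and $T$ are distinct sets of equal cardinality $m$, neither can be contained in the other; consequently $S \cap T$ is a proper subset of $S$, giving $|S \cap T| < |S| = m$. These two inequalities are incompatible, yielding the contradiction. Hence no two distinct minimum-cardinality elements can exist, and the minimum element is unique.

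This argument is short and essentially a routine consequence of closure under intersection, so I do not anticipate a genuine obstacle. The only point requiring care is the justification that two distinct finite sets of the same size force their intersection to be strictly smaller than each of them; this follows because $S \neq T$ forces at least one element of $S$ to lie outside $T$ (or vice versa), so $S \cap T \subsetneq S$. Since $E(G)$ is finite, a minimum-cardinality element certainly exists, so the statement is non-vacuous.
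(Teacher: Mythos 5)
Your proof is correct and takes essentially the same approach as the paper: both argue by contradiction, using Theorem~\ref{thm:intersection} to place $S \cap T$ in $\mathcal{S}_e$ and then deriving a cardinality contradiction with minimality. In fact your write-up is slightly more careful than the paper's, since you explicitly justify that distinctness of two equal-size sets forces $S \cap T \subsetneq S$, a step the paper asserts without comment.
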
	
	 
	 \begin{proof}
	 	Let $S$ and $T$ be elements of $\mathcal{S}_e$.
	 	Toward a contradiction, suppose that no set in $\mathcal{S}_e$ contains fewer edges than either $S$ or $T$.
	 	The set $S \cap T$ contains fewer edges than either $S$ or $T$.
	 	By Theorem~\ref{thm:intersection}, $S \cap T$ is in $\mathcal{S}_e$.
	 	This is a contradiction.
	 	Therefore, the element of $\mathcal{S}_{e}$ with the fewest number of edges is unique.
	 \end{proof}
	 
	 	Denote by $S_e$ the unique set of least order in $\mathcal{S}_e$. 
 
	\begin{corollary} \label{cor:connected}
		The graph $G[S_e]$ is connected.
	\end{corollary}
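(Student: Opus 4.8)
The plan is to argue by contradiction using the uniqueness and minimality of $S_e$ established in Corollary~\ref{lem:fewestunique}. Suppose $G[S_e]$ is disconnected. Since every vertex of $G[S_e]$ is by definition an endpoint of some edge of $S_e$, every connected component of $G[S_e]$ contains at least one edge. Let $C$ be the component containing the endpoints of $e$, and let $S'$ denote its edge set; then $e \in S'$ and, because some other component contributes at least one edge, $S'$ is a proper subset of $S_e$.

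First I would show that $S' \in \mathcal{S}_e$, which would immediately give a contradiction: $S'$ would be an element of $\mathcal{S}_e$ with strictly fewer edges than $S_e$, contradicting the fact that $S_e$ is the unique set of least order. Condition (1) holds since $e \in S'$. For condition (2), suppose toward a contradiction that some edge $d \notin S'$ forms an induced copy of $P_3$ with an edge $f \in S'$. Since $f \in S' \subseteq S_e$ and $S_e \in \mathcal{S}_e$ satisfies condition (2), the edge $d$ cannot lie outside $S_e$; hence $d \in S_e$.

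The key observation is that an induced $P_3$ on $d$ and $f$ forces $d$ and $f$ to share an endpoint. Thus $d$ and $f$ are adjacent edges both lying in $S_e$, so they belong to the same component of $G[S_e]$. As $f \in S'$ is an edge of $C$, this places $d$ in $C$ as well, i.e.\ $d \in S'$---contradicting the choice of $d$. Therefore $S'$ satisfies condition (2), so $S' \in \mathcal{S}_e$, completing the contradiction. I expect the main subtlety to be precisely this last point: translating the closure property defining $\mathcal{S}_e$ (a global, $P_3$-based condition) into the statement that the closure cannot ``escape'' the component of $e$, which rests on the simple but essential fact that the two edges of an induced $P_3$ meet at a vertex.
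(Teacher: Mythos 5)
Your proof is correct and follows essentially the same route as the paper's: both arguments take the edge set of the component of $G[S_e]$ containing $e$ (the paper calls it the largest connected edge set $S_1$ containing $e$), use the fact that the two edges of an induced $P_3$ must be incident, and derive a contradiction with the minimality of $S_e$ in $\mathcal{S}_e$. If anything, your version is slightly more careful, since you explicitly rule out the crossing edge $d$ lying outside $S_e$ by invoking condition (2) for $S_e$, a step the paper leaves implicit.
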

			
	\begin{proof}
		Toward a contradiction, suppose $G[S_e]$ is disconnected. 
		Let $S_1$ be the largest set of edges in $S_e$ that both contains $e$ and is such that $G[S_1]$ is connected. 
		Since $S_e$ is the unique set of least order in $\mathcal{S}_e$, there must exist some edge $d$ in $S_1$ and some edge $f$ in $S_e \setminus S_1$ such that $d$ and $f$ exist in some induced copy of $P_3$.
		Therefore, $f$ is an edge in $S_e \setminus S_1$ that is incident with an edge in $S_1$.
		This, however, is a contradiction as $S_1$ is the largest set of edges in $S_e$ that both contain $e$ and is such that $G[S_1]$ is connected. 
		Thus $G[S_e]$ is connected.
	\end{proof}

With Theorem~\ref{thm:equalsets}, we establish that these sets of least order form a partition on the edges.	

	\begin{lemma}\label{lem:ab}
		For every pair of edges $uv, xy \in E(G)$, if $uv \in S_{xy}$, then $S_{uv} \subseteq S_{xy}$.
	\end{lemma}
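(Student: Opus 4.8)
The plan is to exploit a simple but decisive feature of the definition: the second defining condition of $\mathcal{S}_e$ makes no reference whatsoever to the base edge $e$. Property~(2) asks only that the set $S$ be closed in the sense that every edge lying in an induced $P_3$ together with some edge of $S$ is itself in $S$, and this is a property of the set $S$ alone. Consequently, if a set $S \subseteq E(G)$ satisfies property~(2), then $S$ belongs to $\mathcal{S}_g$ for \emph{every} edge $g$ that it contains, since the only remaining requirement, property~(1), is merely that $g \in S$. This observation is the crux of the argument; everything else is bookkeeping with results already in hand.

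Concretely, I would first note that $S_{xy}$, being an element of $\mathcal{S}_{xy}$, satisfies property~(2). Since by hypothesis $uv \in S_{xy}$, the set $S_{xy}$ also satisfies property~(1) with respect to the edge $uv$, and therefore $S_{xy} \in \mathcal{S}_{uv}$. It then remains to deduce the containment $S_{uv} \subseteq S_{xy}$ from this membership. Here I would appeal to Theorem~\ref{thm:intersection}: the intersection $S_{uv} \cap S_{xy}$ again lies in $\mathcal{S}_{uv}$. Since $S_{uv}$ is the unique element of $\mathcal{S}_{uv}$ of least order by Corollary~\ref{lem:fewestunique}, and $S_{uv} \cap S_{xy} \subseteq S_{uv}$, minimality forces $S_{uv} \cap S_{xy} = S_{uv}$, that is, $S_{uv} \subseteq S_{xy}$, as desired.

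The only step demanding genuine care, and really the main obstacle, is recognizing that property~(2) is insensitive to the choice of base edge; once this is internalized, the conclusion follows immediately from the intersection-closure and minimality already established. The plan is therefore to state that observation cleanly, verify directly from the definition that no induced-$P_3$ condition is silently tied to $e$, and then let Theorem~\ref{thm:intersection} and Corollary~\ref{lem:fewestunique} do the remaining work. I would also flag that this argument in fact yields the two-sided statement $S_{uv} \subseteq S_{xy}$ \emph{and} $S_{xy} \subseteq S_{uv}$ once one knows $xy \in S_{uv}$ as well, which is presumably how the subsequent Theorem~\ref{thm:equalsets} upgrades this lemma to an equality and hence to a genuine partition.
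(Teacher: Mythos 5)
Your proposal is correct and takes essentially the same route as the paper: both arguments consider the intersection $S_{uv} \cap S_{xy}$, show it lies in $\mathcal{S}_{uv}$, and then conclude $S_{uv} \cap S_{xy} = S_{uv}$ (hence $S_{uv} \subseteq S_{xy}$) by the minimality of $S_{uv}$ guaranteed by Corollary~\ref{lem:fewestunique}. The only cosmetic difference is that the paper verifies the closure property of the intersection directly by hand (implicitly using the same base-edge-insensitivity of property~(2) that you state explicitly), whereas you cleanly reuse Theorem~\ref{thm:intersection} after observing that $S_{xy} \in \mathcal{S}_{uv}$.
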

	
	\begin{proof}
		Let $uv,xy \in E(G)$ and let $uv \in S_{xy}$. 
		The intersection, $S = S_{xy} \cap S_{uv}$, contains the edge $uv$. 
		If there exists any edge $s$ in $S$ such that $s$ is in an induced copy of $P_3$ with an edge $d$ in $E(G) \setminus S$, then $d$ is in both $S_{xy}$ and $S_{uv}$. 
		This however, cannot be the case since $d$ would then be in the intersection $S$. 
		Thus $S$ is the smallest set of edges that both contains $uv$ and is such that for all edges $e$ in $E(G) \setminus S$, the edge $e$ does not exist in an induced copy of $P_3$ with any edge in $S$. 
		Therefore, $S = S_{uv}$ and $S_{uv} \subseteq S_{xy}$.
	\end{proof}
	
	 \begin{theorem}\label{thm:equalsets}
	 	For every pair of edges $xy$ and $uv$ in $E(G)$, we have $S_{xy} = S_{uv}$ if and only if $xy \in S_{uv}$.
	 \end{theorem}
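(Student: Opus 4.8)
The plan is to prove the biconditional $S_{xy} = S_{uv} \iff xy \in S_{uv}$ by establishing each direction separately, leaning heavily on Lemma~\ref{lem:ab}, which already does most of the work.

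First I would handle the easy direction: suppose $S_{xy} = S_{uv}$. Since every set $S_e$ contains its generating edge $e$ by property (1) in the definition of $\mathcal{S}_e$, we have $xy \in S_{xy} = S_{uv}$, giving $xy \in S_{uv}$ immediately. This requires only the defining membership property and no further machinery.

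For the converse, suppose $xy \in S_{uv}$. I would apply Lemma~\ref{lem:ab} directly with the substitution $(uv, xy) \mapsto (xy, uv)$ in its statement (the lemma says that if $uv \in S_{xy}$ then $S_{uv} \subseteq S_{xy}$). Concretely, from $xy \in S_{uv}$ the lemma yields $S_{xy} \subseteq S_{uv}$. The remaining task is to obtain the reverse inclusion $S_{uv} \subseteq S_{xy}$, so that the two sets coincide. The natural route is to show that $xy \in S_{uv}$ forces $uv \in S_{xy}$, after which a second application of Lemma~\ref{lem:ab} gives $S_{uv} \subseteq S_{xy}$.

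The main obstacle, then, is establishing the symmetry claim that $xy \in S_{uv}$ implies $uv \in S_{xy}$. I would argue this using the inclusion $S_{xy} \subseteq S_{uv}$ just obtained together with the minimality characterization of these sets. Since $S_{xy} \subseteq S_{uv}$ and $S_{xy}$ contains $xy$ and is closed under the induced-$P_3$ condition (property (2)), I expect to show that $S_{xy}$ itself qualifies as a member of $\mathcal{S}_{uv}$ precisely when it also contains $uv$; the cleanest way to force $uv \in S_{xy}$ is to observe that if $uv \notin S_{xy}$ then $S_{xy}$ would be a proper subset of $S_{uv}$ still satisfying the closure property but failing to contain $uv$, contradicting either the minimality of $S_{uv}$ (via Corollary~\ref{lem:fewestunique}) or the definition of $S_{uv}$ as the least-order set containing $uv$. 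Care is needed here because $S_{xy}$ need not contain $uv$ a priori, so I would argue that $S_{xy} \cup S_{uv} = S_{uv}$ and use the intersection-closure of Theorem~\ref{thm:intersection} to pin down that $S_{xy}$, being contained in $S_{uv}$ and satisfying closure, is forced to equal $S_{uv}$ once we verify it contains $uv$. Making this last deduction airtight — ensuring no smaller valid set sneaks between $S_{xy}$ and $S_{uv}$ — is the delicate step, and I would rely on the uniqueness of the least-order element guaranteed by Corollary~\ref{lem:fewestunique} to close the argument.
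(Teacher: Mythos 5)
Your easy direction and the first application of Lemma~\ref{lem:ab} (giving $S_{xy} \subseteq S_{uv}$ from $xy \in S_{uv}$) are fine, and your overall skeleton — reduce everything to the symmetry claim that $xy \in S_{uv}$ forces $uv \in S_{xy}$, then apply Lemma~\ref{lem:ab} a second time — would indeed finish the proof. But your justification of the symmetry claim has a genuine gap. You argue that if $uv \notin S_{xy}$, then $S_{xy}$ is a proper subset of $S_{uv}$ satisfying the closure property but not containing $uv$, ``contradicting the minimality of $S_{uv}$.'' There is no contradiction there: the minimality guaranteed by Corollary~\ref{lem:fewestunique} is minimality among members of $\mathcal{S}_{uv}$, all of which contain $uv$ by property (1). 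A closed proper subset of $S_{uv}$ that omits $uv$ is simply not a member of $\mathcal{S}_{uv}$, so it does not compete with $S_{uv}$ at all. Your fallback tools do not repair this: the observation $S_{xy} \cup S_{uv} = S_{uv}$ merely restates the inclusion, and Theorem~\ref{thm:intersection} applied to $S_{xy}$ and $S_{uv}$ returns $S_{xy}$ itself, so intersection-closure produces nothing new; and the phrase ``once we verify it contains $uv$'' assumes exactly the claim being proven, so the sketch is circular at its crucial point.

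The missing idea — and the one the paper's own proof uses in its Case 1 — is to pass to the set difference rather than to $S_{xy}$ itself. Suppose $uv \notin S_{xy}$ and let $T = S_{uv} \setminus S_{xy}$. Then $uv \in T$, and $T$ satisfies property (2): if $d \notin T$, then either $d \in S_{xy}$, in which case $d$ lies in no induced copy of $P_3$ with any edge of $T$ (edges of $T$ are outside $S_{xy}$ and $S_{xy}$ is closed, and the relation of forming an induced $P_3$ is symmetric), or $d \notin S_{uv}$, in which case $d$ lies in no induced copy of $P_3$ with any edge of $S_{uv} \supseteq T$. Hence $T \in \mathcal{S}_{uv}$. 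But $xy \in S_{xy} \subseteq S_{uv}$, so $xy \in S_{uv} \setminus T$, making $T$ a proper subset of $S_{uv}$ and contradicting the minimality of $S_{uv}$. With this substituted for the flawed step, your structure goes through — and is in fact slightly leaner than the paper's proof, whose Case 2 is rendered vacuous once Lemma~\ref{lem:ab} is used to establish $S_{xy} \subseteq S_{uv}$ at the outset.
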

	
	\begin{proof}
		One direction of this statement is clear because $xy$ is, by definition, in $S_{xy}$. Thus if $S_{xy} = S_{uv}$, then $xy \in S_{uv}$.
		
		Consider now  $xy \in S_{uv}$ with $xy \neq uv$. 
		Toward a contradiction, suppose  $S_{uv} \neq S_{xy}$. 
		Let $S = S_{uv} \cap S_{xy}$. 
		The set $S$ contains at least one edge, $xy$.
		
		\emph{Case 1:} $S = S_{xy}$\\
		No edge in $S_{uv} \setminus S_{xy}$ exists in an induced copy of $P_3$ with an edge in $S_{xy}$. 
		Thus, by Lemma~\ref{lem:ab}, for all edges $ab$ in $S_{xy}$, the set $S_{ab}$ does not contain any edges in $S_{uv} \setminus S_{xy}$. 
		This implies that $uv \in S_{uv} \setminus S_{xy}$. 
		However, since $uv \in S_{uv} \setminus S_{xy}$ and no edge in $S_{uv} \setminus S_{xy}$ exists in an induced copy of $P_3$ with an edge in $S_{xy}$, we  conclude that $xy$ is not in $S_{uv}$, which is a contradiction.
		
		\emph{Case 2:} $S \neq S_{xy}$\\ 
		Since $xy \in S$ and $S$ is a proper subset of $S_{xy}$, there must exist some edge $ab$ in $E(G) \setminus S$ such that $ab$ exists in an induced copy of $P_3$ with some edge in $S$. 
		This implies that for all edges $e$ in $E(G)$, if $S$ is a subset of $S_e$, then $S_e$ contains $ab$. 
		However, this implies that $ab$ is in both $S_{xy}$ and $S_{uv}$, but not in $S_{xy} \cap S_{uv}$. 
		This is a contradiction. 
		
		Therefore $S_{uv} = S_{xy}$.
	\end{proof}
	
\begin{corollary}\label{cor:dichromaticargument}
	Let $G$ be a connected graph and let $uv,vw \in E(G)$.
	If $S_{uv} \neq S_{vw}$, then $uw \in E(G)$.
\end{corollary}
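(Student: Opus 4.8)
The plan is to argue by contradiction: I would assume $uw \notin E(G)$ and derive $S_{uv} = S_{vw}$, directly contradicting the hypothesis. The crucial first step is a structural observation. Since $uv$ and $vw$ share the vertex $v$, the three vertices $u,v,w$ together with the edges $uv$ and $vw$ form a copy of $P_3$, and this copy is \emph{induced} precisely when $uw \notin E(G)$. This is exactly the bridge between the desired conclusion and the defining property of the sets $S_e$, which is phrased entirely in terms of induced copies of $P_3$.

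Next I would determine whether $vw$ lies in $S_{uv}$. By property (2) defining the family $\mathcal{S}_{uv}$, and hence satisfied by its minimal member $S_{uv}$, no edge outside $S_{uv}$ can occur in an induced $P_3$ together with an edge of $S_{uv}$. Now $uv \in S_{uv}$, and under the assumption $uw \notin E(G)$ the edges $uv$ and $vw$ form an induced $P_3$ on $\{u,v,w\}$. If $vw$ were outside $S_{uv}$, this would furnish exactly the forbidden configuration, contradicting property (2). Hence $vw \in S_{uv}$.

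Finally, I would invoke Theorem~\ref{thm:equalsets} applied to the pair of edges $vw$ and $uv$: since $vw \in S_{uv}$, the theorem yields $S_{vw} = S_{uv}$. This contradicts the hypothesis $S_{uv} \neq S_{vw}$, and therefore $uw \in E(G)$.

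I do not expect a genuine obstacle, as the argument collapses to a single contradiction once the reduction is in place; the only point demanding care is the equivalence between ``$u,v,w$ induce a copy of $P_3$'' and ``$uw \notin E(G)$,'' since the machinery of $\mathcal{S}_e$ is built on induced $P_3$'s rather than arbitrary paths. I would also note that the connectedness hypothesis on $G$ appears unnecessary for this particular argument and presumably reflects the context in which the corollary is later applied.
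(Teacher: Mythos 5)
Your proof is correct and follows exactly the route the paper intends: the paper states this as an immediate consequence of Theorem~\ref{thm:equalsets} and the defining property of $S_e$, which is precisely your argument (if $uw \notin E(G)$ then $uv,vw$ form an induced $P_3$, forcing $vw \in S_{uv}$ and hence $S_{uv}=S_{vw}$). Your side remark that connectedness is not needed here is also accurate.
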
	
	
By Theorem \ref{thm:equalsets}, for $e,f \in E(G)$ it follows that either $S_e = S_f$ or $S_e \cap S_f = \emptyset$.
Thus there exists a subset of edges $E^\prime(G) \subset E(G)$ so that
\begin{itemize}
	\item $\bigcup\limits_{e \in E^\prime(G)} S_e = E(G)$; and
	\item $S_e \cap S_f = \emptyset$ for all $e,f \in E^\prime(G)$ with $e \neq f$.
\end{itemize}
Using this partition of the edges, we study quasi-transitive $2$-edge-colourings of graphs.

Let $\mathcal{C}_G$ be the relation on $E(G)$ so that $e \sim f$ when $c(e) = c(f)$ for every quasi-transitive $2$-edge-colouring of $G$. The relation $\mathcal{C}_G$ will serve as a partition on the edges, so $\mathcal{C}_G$ is an equivalence relation. Let $[e]_{\mathcal{C}}$ denote the equivalence class of $e$ with respect to this relation.

\begin{theorem}\label{thm:equivalencerelation2}
	Let $G$ be a graph.
	For every $e \in E(G)$, we have $[e]_{\mathcal{C}} =S_{e}$.
\end{theorem}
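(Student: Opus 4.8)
The plan is to prove the two inclusions $S_e \subseteq [e]_{\mathcal{C}}$ and $[e]_{\mathcal{C}} \subseteq S_e$ separately, using as the central link the observation that the two edges of any induced $P_3$ must be assigned the same colour by every quasi-transitive $2$-edge-colouring. Indeed, if $d = xy$ and $f = yz$ form an induced $P_3$, then $xz \notin E(G)$, so the contrapositive of the defining condition of a quasi-transitive $2$-edge-colouring forces $c(d) = c(f)$. This is precisely the combinatorial content encoded in the second condition defining $\mathcal{S}_e$, and it is what lets me pass freely between the colouring relation $\mathcal{C}_G$ and the set $S_e$.

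For the inclusion $S_e \subseteq [e]_{\mathcal{C}}$, I would argue that $[e]_{\mathcal{C}}$ is itself a member of $\mathcal{S}_e$, whence minimality of $S_e$ (Corollary~\ref{lem:fewestunique}) gives the inclusion. The first condition is immediate, since $c(e) = c(e)$ in every colouring. For the second, suppose some edge $d \notin [e]_{\mathcal{C}}$ formed an induced $P_3$ with an edge $g \in [e]_{\mathcal{C}}$. By definition of the equivalence class there is a quasi-transitive $2$-edge-colouring with $c(d) \neq c(e)$, while $c(g) = c(e)$ in that same colouring; hence $c(d) \neq c(g)$, contradicting the forcing observation above. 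Thus $[e]_{\mathcal{C}} \in \mathcal{S}_e$ and $S_e \subseteq [e]_{\mathcal{C}}$.

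For the reverse inclusion $[e]_{\mathcal{C}} \subseteq S_e$, I would exhibit, for each edge $f \notin S_e$, a single quasi-transitive $2$-edge-colouring separating $e$ from $f$; this shows $f \notin [e]_{\mathcal{C}}$ and gives the inclusion by contraposition. The natural candidate is the colouring $c$ that assigns one colour to every edge of $S_e$ and the other colour to every edge of $E(G) \setminus S_e$. The crucial point is that this $c$ is genuinely quasi-transitive: by the second condition in the definition of $\mathcal{S}_e$, no induced $P_3$ has one edge in $S_e$ and the other outside $S_e$, so no induced $P_3$ is bichromatic, and therefore no bichromatic pair of adjacent edges can have non-adjacent far endpoints. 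Since $e \in S_e$ and $f \notin S_e$, we have $c(e) \neq c(f)$, as required.

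The steps are individually short, so I do not anticipate a genuine obstacle; the only point requiring care is the verification that the two-block colouring built from $S_e$ is actually quasi-transitive, and this reduces immediately to the closure property of $S_e$. I would also note in passing that the separating colouring above is nontrivial precisely when both $S_e$ and its complement are nonempty, which is exactly the situation arising when such an $f$ exists. This is the bridge that this theorem provides between the edge partition studied in this section and the classification of \colourable\ graphs.
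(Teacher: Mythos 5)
Your proof is correct, but it is organized differently from the paper's. The paper argues by contradiction: it sets $S = [e]_{\mathcal{C}} \cap S_e$, supposes $[e]_{\mathcal{C}} \neq S_e$, picks an edge $d \notin S$ incident with an edge of $S$ (this is where the paper quietly assumes $G$ is connected), and shows in two cases that $d$ cannot lie in an induced copy of $P_3$ with any edge of $S$; from this it concludes both $S = S_e$ and, via the two-block colouring, $S = [e]_{\mathcal{C}}$. You instead prove the two inclusions directly: $[e]_{\mathcal{C}} \in \mathcal{S}_e$ (via the forcing observation that the two edges of an induced $P_3$ get the same colour in every quasi-transitive $2$-edge-colouring), giving $S_e \subseteq [e]_{\mathcal{C}}$; and the two-block colouring built from $S_e$ is quasi-transitive and separates $e$ from every edge outside $S_e$, giving $[e]_{\mathcal{C}} \subseteq S_e$. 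The key ingredients (the $P_3$-forcing observation and the quasi-transitivity of the two-block colouring) are the same in both arguments, but your decomposition buys something: it avoids the case analysis, it requires no connectivity hypothesis (so it covers disconnected $G$, which the theorem statement permits but the paper's proof tacitly excludes), and it isolates the reusable fact $[e]_{\mathcal{C}} \in \mathcal{S}_e$, which the paper never states explicitly. One small point to tighten: minimality of $S_e$ in cardinality alone does not immediately convert $[e]_{\mathcal{C}} \in \mathcal{S}_e$ into the inclusion $S_e \subseteq [e]_{\mathcal{C}}$; you should first intersect, noting that $S_e \cap [e]_{\mathcal{C}} \in \mathcal{S}_e$ by Theorem~\ref{thm:intersection} and has at most as many edges as $S_e$, so by Corollary~\ref{lem:fewestunique} it equals $S_e$, which is exactly the desired inclusion.
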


\begin{proof}
	Let $G$ be a connected graph and let $e \in E(G)$.
	Let $S = [e]_{\mathcal{C}} \cap S_e$.
	Since $e \in [e]_{\mathcal{C}}$ and $e \in S_e$, necessarily $S$ is non-empty.
	Toward a contradiction, suppose $[e]_{\mathcal{C}} \neq S_{e}$.
	Thus there exists an edge in $E(G)$ that is not in $S$, and since $G$ is connected, there exists an edge in $E(G)$ that is both incident with an edge in $S$ and not, itself, in $S$.
	
	Let $d$ be an edge in $E(G) \setminus S$ that is incident with an edge $f$ in $S$.

	Since the edge $d$ is not in $S$, either $d$ is not in $[e]_{\mathcal{C}}$ or $d$ is not in $S_e$.
	We will show that, in both cases, there does not exist an induced copy of $P_3$ comprising $d$ and $f$.
	
	\textit{Case 1:} $d$ is not in $[e]_{\mathcal{C}}$.
	This implies there exists a quasi-transitive $2$-edge-colouring of $G$ in which $c(f)$ and $c(d)$ are not equal. 
	Thus $df$ is not an induced copy of $P_3$ because that would imply every proper quasi-transitive $2$-edge-colouring of $G$ has $c(d)$ equal to $c(f)$. 
	
	\textit{Case 2:} $d$ is not in $S_e$.
	This implies there does not exist an induced copy of $P_3$ containing $d$ and an edge in $S_e$. Thus $df$ is not an induced copy of $P_3$.

	So no edge in $S$ is contained in an induced copy of $P_3$ with the edge $d$.
	Therefore, since $S$ is a subset of $S_e$, and $S$ contains $e$, we have that $S = S_{e}$.
	However in addition, with no edges in $S$ existing in an induced copy of $P_3$ with an edge in $E(G) \setminus S$, there exist quasi-transitive $2$-edge-colourings of $G$ in which every edge in $E(G) \setminus S$ maps to $R$ and every edge in $S$ maps to $B$.
	So no edges in $E(G) \setminus S$ are in $[e]_{\mathcal{C}}$. 
	
	Therefore, since $S$ is a subset of $[e]_{\mathcal{C}}$, we have $S_{e} = S = [e]_{\mathcal{C}}$.

\end{proof}

\begin{corollary}\label{cor:countingColourings}
	If $G$ is a connected graph such that $\mathcal{C}_G$ has $k$ equivalence classes, then there are $2^{k}$ quasi-transitive $2$-edge-colourings of $G$.
\end{corollary}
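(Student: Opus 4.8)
The plan is to exhibit a bijection between the quasi-transitive $2$-edge-colourings of $G$ and the set of the $2^k$ functions assigning a colour in $\{R,B\}$ to each of the $k$ equivalence classes of $\mathcal{C}_G$. First I would record the easy ``upper bound'' half, which is essentially built into the definition of $\mathcal{C}_G$: by construction $e \sim f$ implies $c(e) = c(f)$ for every quasi-transitive $2$-edge-colouring $c$, so each such $c$ is constant on every equivalence class. Hence $c$ is completely determined by the colour it assigns to each class, which shows the number of quasi-transitive $2$-edge-colourings is at most $2^k$. Since each equivalence class is non-empty, two distinct assignments of colours to the classes disagree on at least one edge and therefore give distinct colourings, so the map from class-assignments to colourings is injective.

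The remaining, and principal, task is the ``validity'' direction: every one of the $2^k$ assignments of colours to the classes must in fact be realised by a genuine quasi-transitive $2$-edge-colouring. Equivalently, colouring each class monochromatically by an arbitrary and independent choice of colour should never produce an induced alternating $P_3$. I would argue this directly. Let $c$ be any edge-colouring that is constant on each equivalence class, and consider two adjacent edges $uv, vw \in E(G)$ with $c(uv) \neq c(vw)$; I must show $uw \in E(G)$. Because $c$ is constant on classes and $c(uv) \neq c(vw)$, the edges $uv$ and $vw$ lie in different classes, so by Theorem \ref{thm:equivalencerelation2} we have $S_{uv} \neq S_{vw}$. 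Corollary \ref{cor:dichromaticargument} then gives $uw \in E(G)$, exactly the quasi-transitivity condition. Thus $c$ is a quasi-transitive $2$-edge-colouring, and every class-assignment arises from one.

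Putting the two directions together, the map from $\{R,B\}^k$ to the quasi-transitive $2$-edge-colourings of $G$ is a well-defined bijection, and therefore $G$ has exactly $2^k$ quasi-transitive $2$-edge-colourings.

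The hard part is the validity direction of the second paragraph, since the upper bound is immediate from the definition of $\mathcal{C}_G$. The crux is showing that no independent choice of colours on the classes can create a forbidden induced alternating $P_3$, and this is precisely where Corollary \ref{cor:dichromaticargument} does the essential work: it converts the statement that $uv$ and $vw$ lie in different classes (via the non-equality $S_{uv} \neq S_{vw}$) into the presence of the closing edge $uw$, which is what quasi-transitivity requires. I expect no genuine obstacle beyond invoking this corollary in its contrapositive form, together with Theorem \ref{thm:equivalencerelation2} identifying the classes of $\mathcal{C}_G$ with the sets $S_e$; the connectivity hypothesis on $G$ is needed only to apply Corollary \ref{cor:dichromaticargument}.
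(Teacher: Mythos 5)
Your proof is correct, and it is actually more complete than the paper's own proof. The paper's argument consists only of what your first paragraph records: every quasi-transitive $2$-edge-colouring is constant on each equivalence class (immediate from the definition of $\mathcal{C}_G$), and there are $2^k$ ways to assign colours to $k$ classes; it then simply asserts that $2^k$ colourings \emph{exist}, without checking that each of these assignments really yields a quasi-transitive colouring. That realizability step is the substantive half of the statement, and it is exactly what your second paragraph supplies: for a class-constant colouring, an alternating pair $uv, vw$ must lie in distinct classes, Theorem~\ref{thm:equivalencerelation2} translates this into $S_{uv} \neq S_{vw}$, and Corollary~\ref{cor:dichromaticargument} then forces $uw \in E(G)$. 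In the paper this direction appears only implicitly, inside the proof of Theorem~\ref{thm:equivalencerelation2}, where it is shown that colouring a single class $S_e$ blue and \emph{all} remaining edges red is quasi-transitive; that argument handles one class against the rest, not an arbitrary assignment over all $k$ classes (which is what the count requires once $k \geq 3$), so your explicit verification genuinely closes a gap rather than merely restating the paper. The cost is negligible: you invoke Corollary~\ref{cor:dichromaticargument}, whose connectivity hypothesis is available, and no circularity arises since both results you cite precede the corollary. An equivalent, slightly more direct alternative would be to note that by the defining property of the sets $S_e$ no induced copy of $P_3$ has its two edges in different classes, so any class-constant colouring gives both edges of every induced $P_3$ the same colour; but your contrapositive route through Corollary~\ref{cor:dichromaticargument} is equally valid.
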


\begin{proof}
	For every edge $e$ in $E(G)$, in a quasi-transitive $2$-edge-colouring, either $c(e) = R$ or $c(e) = B$.
	Since every edge in an equivalence class must map to the same value, there are $k$ equivalence classes, and every edge must map to one of two values, there exist $2^k$ quasi-transitive $2$-edge-colourings of $G$. 
\end{proof}

With our next theorem, we show that there exist graphs with any positive integral number of equivalence classes. This theorem will rely on some graph theory terminology that we must first define.

A \textit{threshold graph} is a graph constructed from a single vertex by repeatedly performing one of two operations: adding an independent vertex or adding a universal vertex.
A \textit{split graph} is a graph such that the vertices can be partitioned into a complete graph and an independent set.

\begin{theorem}\label{thm:threshold}
	For every integer $k \geq 1$, there exists a graph $G$ such that $\mathcal{C}_G$ has exactly $k$ equivalence classes.
\end{theorem}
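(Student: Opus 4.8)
The plan is to realize the required graph as a threshold graph, exploiting the two construction operations to control the number of classes one at a time. The starting point is the following reformulation, available from Corollary~\ref{cor:dichromaticargument} together with Theorem~\ref{thm:equivalencerelation2}: two edges lie in the same class of $\mathcal{C}_G$ exactly when they are joined by a chain of edges in which each consecutive pair forms an induced copy of $P_3$ (equivalently, each consecutive pair is an adjacent pair of edges whose outer endpoints are non-adjacent). Thus the number of equivalence classes equals the number of connected components of the auxiliary forcing graph on $E(G)$ whose adjacencies are the induced copies of $P_3$, and I will track how this count changes under the two threshold operations.

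The heart of the argument is a lemma describing the effect of adding a universal vertex $u$ to a graph $G$. I claim that (i) no two old edges are newly forced together and no old edge is forced together with a new edge $uv$, and (ii) the new edges $\{uv : v \in V(G)\}$ are forced together exactly according to the connected components of the complement $\overline{G}$. Part (i) is a short case check: an induced $P_3$ among old edges depends only on adjacencies among old vertices, which are untouched by the addition; and a would-be induced $P_3$ formed by $uv$ and an incident old edge $vw$ fails to be induced because $u$ is universal, so $uw \in E(G)$. For part (ii), two new edges $uv, uw$ form an induced $P_3$ precisely when $vw \notin E(G)$, i.e.\ when $v$ and $w$ are adjacent in $\overline{G}$; taking the transitive closure identifies the classes of new edges with the components of $\overline{G}$. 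Consequently, adding a universal vertex increases the number of classes by exactly the number of connected components of $\overline{G}$, while leaving the existing classes intact.

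With this lemma in hand, the remaining task is to guarantee that each universal vertex contributes exactly one new class, that is, that $\overline{G}$ is connected at the moment of its addition. Here I use the isolated-vertex operation: adding an isolated vertex creates no edges, so it changes no class, and in the complement it appears as a vertex adjacent to everything, which makes $\overline{G}$ connected. Hence, after inserting one isolated vertex, the next universal vertex raises the class count by precisely $1$.

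The construction then proceeds by induction. Begin with a single vertex and add a universal vertex to obtain $K_2$, which has one edge and hence one class; this settles $k = 1$. For $k \ge 2$, start from $K_2$ and repeat the pair of operations ``add an isolated vertex, then add a universal vertex'' a total of $k-1$ times. By the two observations above each repetition increases the number of classes by exactly $1$, so the final graph has exactly $k$ equivalence classes. The resulting graph is a threshold graph by construction (hence a split graph), and it is connected because the final operation is the addition of a universal vertex. I expect the main obstacle to be the case analysis in the universal-vertex lemma---specifically verifying that adding $u$ neither merges nor splits any existing class---and the bookkeeping that keeps $\overline{G}$ connected before each universal step; once those are pinned down, the count follows immediately.
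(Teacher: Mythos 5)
Your proposal is correct and takes essentially the same approach as the paper: both build the identical family of threshold graphs by alternately adding an isolated and then a universal vertex starting from $K_2$, and both induct on these pairs of operations, showing each pair adds exactly one new equivalence class while leaving the existing classes of $\mathcal{C}_G$ untouched. The only difference is in packaging the key step: where you phrase the universal-vertex count via connectivity of the complement $\overline{G}$ (a slightly more general lemma), the paper argues directly that the pendant edge $v_{n+1}v_{n+2}$ forms an induced copy of $P_3$ with every other newly created edge, forcing all new edges into a single class.
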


\begin{proof}
	Let $G_n$ be a threshold graph with $n$ vertices, where $n$ is an even integer, constructed by alternating the two possible actions, beginning with adding a universal vertex to a copy of $K_1$ (see Figure~\ref{fig:threshold}). $G_n$ is a split graph in which the complete graph and vertex set have the same number of vertices.
	
			\begin{figure}[H]
		\centering
		
		\begin{tikzpicture}[-,-=stealth', auto,node distance=1.5cm,
			thick,scale=0.4, main node/.style={scale=0.8,circle,draw,font=\sffamily\Large\bfseries}]
			
			\node[main node] (1) 					        {$v_1$};
			\node[main node] (2)  [right = 1cm of 1]        {$v_2$}; 
			\node[main node] (3)  [right = 2cm of 2]        {$v_1$};
			\node[main node] (4)  [right = 1cm of 3]        {$v_2$};
			\node[main node] (5)  [above = 1cm of 3]        {$v_3$};
			\node[main node] (6)  [above = 1cm of 4]        {$v_4$};
			\node[main node] (7)  [right = 2cm of 4]        {$v_1$};
			\node[main node] (8)  [right = 1cm of 7]        {$v_2$};			
			\node[main node] (9)  [above = 1cm of 7]        {$v_3$};			
			\node[main node] (10)  [above = 1cm of 8]       {$v_4$};			
			\node[main node] (11)  [above = 1cm of 9]       {$v_5$};	
			\node[main node] (12)  [above = 1cm of 10]      {$v_6$};
			
			\draw[]
			(1) -- (2)
			(3) -- (4)
			(7) -- (8);
			
			\draw[dashed]
			(5) -- (3)
			(5) -- (4)
			(5) -- (6)
			(9) -- (7)
			(9) -- (8)
			(9) -- (10)
			;
			
			\draw[dotted]
			(11) -- (8)
			(11) -- (9)
			(11) -- (10)
			(11) -- (12)
			;
			
			\draw[dotted] (11) edge [out=200,in=160,looseness=1] (7);
			
		\end{tikzpicture}
		
		\caption{Threshold graphs constructed by alternating the two possible actions. Equivalence classes are shown by solid, dashed, and dotted lines.}
		\label{fig:threshold}
		
	\end{figure}
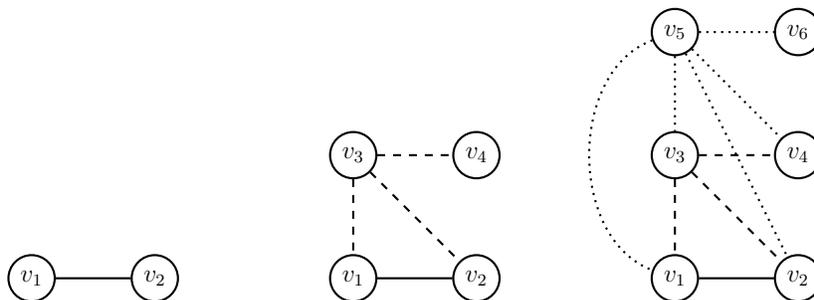

	Let $e$ be an edge in $G_n$. In $G_{n+k}$, for all even integers $k \geq 2$, $S_e$ will be the same set as it is in $G_n$  (assuming the vertices receive the same label in both graphs, as pictured in Figure~\ref{fig:threshold}) because every added vertex is either adjacent to every vertex in $S_e$ or none of the vertices in $S_e$.
	So $G_{n+2}$ has more equivalence classes than $G_n$ for all $n \geq 2$.
	The two vertices, $v_{n+1}$ and $v_{n+2}$, added to $G_n$ to create $G_{n+2}$ are adjacent and this edge, $v_{n+1}v_{n+2}$, induces a copy of $P_3$ with every other edge that is in $G_{n+2}$ and not in $G_n$.
	Therefore, $G_{n+2}$ has exactly one more equivalence class than $G_n$, for all $n \geq 2$.
	Since $G_2$ has only one edge and thus only one equivalence class, the graph $G_n$ has exactly $\frac{n}{2}$ equivalence classes for all even positive integers $n$. 
\end{proof}

To classify those graphs that admit only trivial quasi-transitive $2$-edge-colourings, it suffices to classify those graphs for which $S_e = S_f$ for all $e,f \in E(G)$.
Similarly, to classify those graphs that are uniquely quasi-transitively $2$-edge-colourable, it suffices to classify those graphs for which there exists a pair of edges $e,f \in E(G)$ so that $E(G) = S_e \cup S_f$ and $S_e \neq S_f$.
For these ends, we define the following notation and terminology.

Let $G$ be a graph and let $e \in E(G)$.
An \emph{$S_e$-path} is a path in $G[S_e]$. 
An \emph{$S_e$-$k$-path} is a path of length $k$ in $G[S_e]$. 
For $v \in V(G)$, denote by $N_{G[S_e]}[v]$ the closed neighbourhood of $v$ in $G[S_e]$.

With an eye towards a classification of those graphs with at least $2$-equivalence classes (and thus, also a classification of those graphs with exactly one equivalence class), these next two lemmas show that in every graph that admits a non-trivial quasi-transitive $2$-edge-colouring, there exists an equivalence class $S_e$ for which $|V(S_e)| \neq V(G)$.

	\begin{lemma}\label{lem:ECHF1F2}
	Let $G$ be a graph.
	If $H$ is an induced proper  subgraph of $G$ such that 
	\begin{itemize}
		\item $H$ is connected;
		\item $2 \leq |V(H)| \leq n-1$;  and 
		\item for every vertex $u$ in $V(G) \setminus V(H)$, if $u$ is adjacent to a vertex in $V(H)$, then $u$ is adjacent to every vertex in $V(H)$;
	\end{itemize}  	
	then for all edges $e$ in $E(H)$,  we have $S_e \subseteq E(H)$.
\end{lemma}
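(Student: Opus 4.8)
The plan is to exploit the minimality of $S_e$ recorded in Corollary~\ref{lem:fewestunique}, reducing the whole statement to a single membership check. First I would observe that $S_e$ is contained in \emph{every} member of $\mathcal{S}_e$: given any $T \in \mathcal{S}_e$, Theorem~\ref{thm:intersection} yields $S_e \cap T \in \mathcal{S}_e$, and since $S_e \cap T$ has at most as many edges as $S_e$, the uniqueness of the minimum-order element forces $S_e \cap T = S_e$, that is, $S_e \subseteq T$. Hence, to prove $S_e \subseteq E(H)$ it is enough to show that $E(H)$ is itself a member of $\mathcal{S}_e$.

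Fix an edge $e \in E(H)$. Condition (1) in the definition of $\mathcal{S}_e$ holds at once because $e \in E(H)$, so the entire argument rests on condition (2): no edge $d \in E(G) \setminus E(H)$ can lie in an induced $P_3$ together with an edge $f \in E(H)$. Since $H$ is an \emph{induced} subgraph, any edge of $G$ with both endpoints in $V(H)$ already belongs to $E(H)$; therefore every $d \notin E(H)$ has at least one endpoint outside $V(H)$. I would now split according to how many endpoints of $d$ lie outside $V(H)$. If both endpoints of $d$ lie in $V(G) \setminus V(H)$, then, since any $f \in E(H)$ has both endpoints in $V(H)$, the edges $d$ and $f$ share no vertex and so cannot form a $P_3$ at all.

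The remaining case is the crux and is precisely where the third hypothesis is indispensable. Suppose $d = uv$ with $u \in V(H)$ and $v \in V(G) \setminus V(H)$, and let $f \in E(H)$ meet $d$ in a common vertex. Because both endpoints of $f$ lie in $V(H)$ while $v \notin V(H)$, the shared vertex must be $u$, so $f = uw$ for some $w \in V(H)$, and the only candidate for a $P_3$ is the triple $\{u,v,w\}$ with edges $uv = d$ and $uw = f$. For this triple to induce a $P_3$ we would need $vw \notin E(G)$; but $v$ is a vertex of $V(G) \setminus V(H)$ adjacent to the vertex $u \in V(H)$, so by the module-type hypothesis $v$ is adjacent to \emph{every} vertex of $V(H)$, in particular $vw \in E(G)$. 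Thus $\{u,v,w\}$ induces a triangle rather than a $P_3$, and no induced $P_3$ arises. This establishes condition (2), so $E(H) \in \mathcal{S}_e$ and the containment $S_e \subseteq E(H)$ follows. I expect this boundary case to be the only genuine obstacle; note in passing that neither the connectivity of $H$ nor the order bounds $2 \leq |V(H)| \leq n-1$ are needed for the containment itself, as they serve the surrounding application rather than this argument.
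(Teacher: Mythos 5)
Your proof is correct, but it takes a genuinely different route from the paper's. The paper argues by contradiction from inside $S_e$: it supposes some $f \in S_e \setminus E(H)$, invokes the structural claim that minimality forces every edge of $S_e$ to be linked to $e$ by a path in which consecutive edges lie in a common induced $P_3$, takes the first edge of such a chain leaving $E(H)$, and derives a violation of the module hypothesis from the resulting induced $P_3$ straddling the boundary of $H$. You instead work from outside: you observe that closure under intersection (Theorem~\ref{thm:intersection}) together with uniqueness of the minimum (Corollary~\ref{lem:fewestunique}) makes $S_e$ a subset of \emph{every} member of $\mathcal{S}_e$, and then reduce the lemma to the single verification that $E(H) \in \mathcal{S}_e$, which follows from the same combinatorial core both proofs share --- the module hypothesis turns any would-be induced $P_3$ with one edge in $E(H)$ and one edge leaving $V(H)$ into a triangle, and edges fully outside $V(H)$ cannot meet an edge of $H$ at all. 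Your version buys two things: it avoids the paper's ``$P_3$-chain'' connectivity claim, which the paper asserts from minimality without a detailed proof, so your argument rests only on results already established; and it isolates exactly which hypotheses do the work, correctly noting that connectivity of $H$ and the bounds $2 \leq |V(H)| \leq n-1$ are not needed for the containment $S_e \subseteq E(H)$ itself. The paper's approach, in exchange, makes explicit the chain structure of equivalence classes, a fact it reuses informally elsewhere (e.g., in Lemma~\ref{lem:noteveryec} and Theorem~\ref{thm:HF1F2}).
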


\begin{proof}
	Let $G$ be a graph and let $H$ be an induced proper subgraph of $G$ so that 
	\begin{itemize}
		\item $H$ is connected;
		\item $2 \leq |V(H)| \leq n-1$;  and 
		\item for every vertex $u$ in $V(G) \setminus V(H)$, if $u$ is adjacent to a vertex in $V(H)$, then $u$ is adjacent to every vertex in $V(H)$.
	\end{itemize}  		
	Let $e \in E(H)$.
	Toward a contradiction, suppose that some edge $f$ is in $S_e$ and not in $E(H)$.
	This implies that some endpoint, $v_f$, of $f$ is not in $V(H)$.
	Since $S_e$ is defined to be the smallest set in $\mathcal{S}_e$, every edge in $S_e$ must exist in some induced copy of $P_3$ with another edge in $S_e$.
	Also, every edge in $S_e$ must be contained in some path containing $e$ in which every pair of incident edges belong to an induced copy of $P_3$ in $G$.
	Without loss of generality, suppose that $f$ is incident with some edge in $E(H)$ along a path containing $e$ in which every pair of incident edges belong to an induced copy of $P_3$ in $G$.
	So $v_f$ is adjacent to some vertex in $V(H)$, but $v_f$ is not adjacent to every vertex in $V(H)$.
	This is a contradiction.
	Therefore, $S_e \subseteq E(H)$.
\end{proof}

	\begin{lemma}\label{lem:noteveryec}
		Let $G$ be a connected graph and let $e$ and $f$ be edges in $E(G)$.
		If $S_e \neq S_f$, then $V(S_e) \neq V(S_f)$.
	\end{lemma}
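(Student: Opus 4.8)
The plan is to argue by contradiction. Suppose $S_e \neq S_f$ yet $V(S_e) = V(S_f) =: W$. Since distinct classes are disjoint (by Theorem~\ref{thm:equalsets}), $S_e \cap S_f = \emptyset$, and both are non-empty, so $|W| \geq 2$. The two facts I would lean on are that $G[S_e]$ and $G[S_f]$ are each connected and span $W$ (Corollary~\ref{cor:connected}), together with the ``cross-triangle'' consequence of Corollary~\ref{cor:dichromaticargument}: whenever an $S_e$-edge and an $S_f$-edge (more generally, two edges from different classes) share a vertex, the third edge of the triangle they span is present. The goal is to turn the coexistence of two spanning, connected, edge-disjoint classes into a structural impossibility.

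The key intermediate step I would prove is that \emph{$W$ is a module of $G$}, i.e.\ every vertex $u \in V(G)\setminus W$ that is adjacent to some vertex of $W$ is adjacent to all of $W$. To see this, fix such a $u$ and note that for any $w \in W$ adjacent to $u$, the edge $uw$ lies outside $W$, hence outside $S_e$; so $uw$ and any $S_e$-edge $wa$ incident at $w$ belong to different classes, and Corollary~\ref{cor:dichromaticargument} forces $ua \in E(G)$. Thus $u$ is adjacent to every $S_e$-neighbour of every $W$-vertex it already sees; since $G[S_e]$ is connected and spans $W$, the set of $W$-vertices adjacent to $u$ is closed under taking $S_e$-neighbours and is non-empty, so it is all of $W$. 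With $W$ established as a module, if $W \subsetneq V(G)$ then $G[W]$ is a proper induced subgraph meeting the hypotheses of Lemma~\ref{lem:ECHF1F2}. Because $G[W]$ is induced, the induced copies of $P_3$ among edges of $G[W]$ are exactly those of $G$, so $S_e$ and $S_f$ survive as distinct equivalence classes of $G[W]$ with $V(S_e)=V(S_f)=V(G[W])$. An induction on $|V(G)|$ then supplies the contradiction in this case, with a small base case ruled out by edge counting (two edge-disjoint connected spanning subgraphs on at most three vertices cannot both exist inside $K_3$).

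The step I expect to be the main obstacle is the residual \emph{spanning case} $W = V(G)$, where both classes already cover every vertex and there is no room to restrict to a smaller module. The natural attack is an extremal argument: choose an $S_f$-edge $xy$ minimising $d := \operatorname{dist}_{G[S_e]}(x,y)$ (necessarily $d \geq 2$, since $xy \notin S_e$), take a shortest $S_e$-path $x = z_0, z_1, \dots, z_d = y$, and apply Corollary~\ref{cor:dichromaticargument} to the incident pair $xz_1 \in S_e$ and $xy \in S_f$ to obtain $z_1 y \in E(G)$ with $\operatorname{dist}_{G[S_e]}(z_1,y) = d-1$; minimality forbids $z_1 y \in S_f$. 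The difficulty is that iterating this only walks the chord $z_i y$ along the path, and near the end of the path these chords are genuine $S_e$-edges, so the chase does not obviously close. My expectation is that resolving this spanning case requires a more global invariant — for instance analysing, for a fixed non-adjacent pair $a,c$ lying in an induced $S_e$-coloured $P_3$, the common-neighbour structure and how the classes of the edges to those common neighbours are forced to agree — rather than purely local triangle-chasing; this is the crux of the argument and where I would concentrate the effort.
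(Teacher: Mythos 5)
Your reduction to the spanning case is sound as far as it goes, but the proposal has a genuine gap exactly where you flag it: the case $W = V(S_e) = V(S_f) = V(G)$ is never resolved, and that case is essentially the entire content of the lemma. The module argument for $W$ and the induction on $|V(G)|$ only peel off vertices outside $W$; once both classes span the graph there is nothing left to remove, your extremal distance-chase admittedly does not close, and the ``more global invariant'' you predict is needed is never supplied. So what you have is a correct (if not strictly necessary) reduction plus an acknowledged open core, not a proof.

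The paper closes precisely this core with a component construction that your approach gestures toward but applies to the wrong set. Fix $uv \in S_f$, delete the closed neighbourhood $N_{G[S_e]}[u]$ from $G[S_e]$, and let $B_e(u,v)$ be the vertex set of the component containing $v$ in what remains. If for some choice of $e$, $u$, $v$ this set has at least two vertices, the paper runs exactly your kind of triangle-chase (Corollary~\ref{cor:dichromaticargument} plus induction along $S_e$-paths) to show that $B_e(u,v)$ --- a proper piece of the common vertex set, not all of it --- is a module of $G$; then Lemma~\ref{lem:ECHF1F2} traps the class of an $S_e$-edge $vx$ with $x \in B_e(u,v)$ inside $B_e(u,v)$, while that class equals $S_e$ and so also contains edges incident with $u$ outside $B_e(u,v)$ (as $N_{G[S_e]}[u] \neq \emptyset$), a contradiction. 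If instead $B_e(u,v) = \{v\}$ for every admissible choice, then $v$'s $S_e$-neighbour $x$ must lie in $N_{G[S_e]}[u]$, so $ux \in S_e$; swapping the roles of $e$ and $f$ (and of $u$ and $x$) and invoking the same ``component is only $\{v\}$'' hypothesis forces $u \in N_{G[S_f]}[x]$, i.e.\ $ux \in S_f$, contradicting disjointness of distinct classes (Theorem~\ref{thm:equalsets}). The lesson is that the module/confinement idea does work, but it must be applied to a proper subpiece of the common vertex set, where Lemma~\ref{lem:ECHF1F2} has something to bite on; applied to all of $W$ it is vacuous exactly in the spanning case you could not finish.
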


	\begin{proof}		
		Let $G$ be a connected graph and let $e$ and $f$ be edges in $E(G)$ such that $S_e \neq S_f$. 
		Toward a contradiction, suppose that $V(S_e) = V(S_f)$. 
		Let $uv \in S_f$.
		The set $N_{G[S_e]}[u]$ is not empty since $u$ is incident with at least one edge in $S_e$.
		Let $G[S_e] - N_{G[S_e]}[u]$ be the induced subgraph of $G[S_e]$ containing the vertices of $V(G[S_e]) \setminus N_{G[S_e]}[u]$.
		Let $B_e(u,v)$ be the set of vertices in the component containing $v$ in $G[S_e] - N_{G[S_e]}[u]$.

		Let $C_e(u,v)$ be the set of all vertices in $V(G)$ that are in neither $N_{G[S_e]}[u]$ nor $B_e(u,v)$.
		The set $C_e(u,v)$ may be empty.
					
		\emph{Case 1: There exists some choice of edge $e$ and adjacent vertices $u$ and $v$ such that $uv \in S_f$, and the set $B_e(u,v)$ has at least two vertices.}\\
		We show every vertex in $V(G) \setminus B_e(u,v)$ that is adjacent to a vertex in $B_e(u,v)$ must be adjacent to every vertex in $B_e(u,v)$. 
		Recall that $V(S_e) = V(S_f)$.
		Since all induced copies of $P_3$ have both or neither of its edges in $S_e$, if there exists a vertex in $V(G) \setminus V(S_e)$, then it is either adjacent to every vertex in $V(S_e)$ or none of the vertices in $V(S_e)$.
		
		Every vertex $b$ in $B_e(u,v)$ exists in some $S_e$-path with endpoints $b$ and $v$. 
		Every vertex in $B_e(u,v)$ that is adjacent to $v$ with an edge in $S_e$ must be adjacent to $u$ with an edge not in $S_e$, by Corollary~\ref{cor:dichromaticargument}. 
		By induction, for some $k$, suppose that every vertex $b$ in $B_e(u,v)$ that exists in an $S_e$-$k$-path with endpoints $b$ and $v$ is adjacent to $u$ with an edge not in $S_e$.
		Let $q$ be a vertex that exists in an $S_e$-$(k+1)$-path with endpoints $q$ and $v$. 
		Since the neighbour of $q$ in this path is adjacent to $u$ with an edge not in $S_e$, by Corollary~\ref{cor:dichromaticargument}, $q$ is adjacent to $u$ with an edge not in $S_e$.
		Therefore, for every vertex $b$ in $B_e(u,v)$, $b$ is adjacent to $u$ with an edge that is not in $S_e$.
		By Corollary~\ref{cor:dichromaticargument}, every vertex in $B_e(u,v)$ is adjacent to every vertex in $N_{G[S_e]}[u]$.
		
		There does not exist any edge in $S_e$ connecting a vertex in $B_e(u,v)$ to a vertex in $C_e(u,v)$ because this would imply that these two vertices exist in the same component of $G[S_e] - N_{G[S_e]}(u)$.
		So for all pairs of adjacent vertices $b \in B_e(u,v)$ and $c \in C_e(u,v)$, the edge $bc$ is not in $S_e$.
		By induction suppose that $c$ is adjacent to $b$ and every vertex $b^\prime$ in $B_e(u,v)$ such that $b^\prime$ exists in an $S_e$-$k$-path with $b$ as an endpoint, for some $k \geq 1$.
		Let $b^{\prime\prime}$ be a vertex such that there exists an $S_e$-$(k+1)$-path with endpoints $b$ and $b^{\prime\prime}$.
		Since the neighbour of $b^{\prime\prime}$ in this path is adjacent to $c$ with an edge not in $S_e$, by Corollary~\ref{cor:dichromaticargument}, the edge $b^{\prime\prime}c$ is in $E(G)$.
		Therefore, every vertex in $V(G) \setminus B_e(u,v)$ that is adjacent to a vertex in $B_e(u,v)$ must be adjacent to every vertex in $B_e(u,v)$. 
		So there does not exist any induced copy of $P_3$ in $G$ such that one edge has both endpoints in $B_e(u,v)$ and the other does not.
	
		Let $x$ be a vertex in $B_e(u,v)$ such that $vx \in S_e$.
		Since $vx$ has both endpoints in $B_e(u,v)$, by Lemma~\ref{lem:ECHF1F2}, the equivalence class $S_{vx}$ is a (perhaps improper) subset of the edges with both endpoints in $B_e(u,v)$.
		Since $N_{G[S_e]}[u]$ is nonempty, there exists some edge in $S_e$ that is not in $B_e(u,v)$.
		However, since $vx \in S_e$, we have that $S_{vx} = S_e$.
		This is a contradiction.
		So $V(S_e) \neq V(S_f)$.		
		
	\emph{Case 2: For all choices of edge, $e$, and adjacent vertices $u$ and $v$ such that $uv \in S_f$, the set $B_e(u,v)$ only contains $v$.}\\		
		Recall that $v$ must have at least one neighbour via an edge in $S_e$ because $V(S_e) = V(S_f)$.
		Let $x$ be a neighbour of $v$ such that $vx$ is in $S_e$.
		Since $x$ is not in $B_e(u,v)$, there does not exist an $S_e$-path connecting $x$ to $v$ that contains no vertices in $N_{S_e}[u]$.
		Since the edge $xv$ is in $S_e$, and $v$ is not in $N_{S_e}[u]$, the vertex $x$ must be in $N_{G[S_e]}[u]$.
			
				\begin{figure}[H]
			\centering

			\begin{tikzpicture}[-,-=stealth', auto,node distance=1.5cm,
				thick,scale=0.4, main node/.style={scale=0.8,circle,draw,font=\sffamily\Large\bfseries}]
				
				\node[main node] (1) 					        {$v$};			
				\node[main node] (3)  [below right = 3cm and 2.5cm of 1]        {$x$}; 
				\node[main node] (4)  [right = 2cm of 1]	       {$u$};			
				\node[main node] (7)  [below right= 1.5cm and 3cm of 4]	       {};			
				\node[main node] (8)  [below right = 1cm and 0.5cm of 7]        {};
				\node[main node] (9)  [below left = 0.7cm and 0.5cm of 7]        {};
				\node[draw=none, fill=none] (100)  [below = 4cm of 1]	       {$B_e(u,v)$};			
				\node[draw=none, fill=none] (101)  [right = 1.5cm of 100]        {$N_{G[S_e]}[u]$};
				\node[draw=none, fill=none] (102)  [right = 1.5cm of 101]        {$C_e(u,v)$};

				\draw
				(-4,-10)--(-4,2)--(3,2)--(3,-10)--(-4,-10)
				(4,-10)--(4,-2)--(11,-2)--(11,-10)--(4,-10)
				(12,-10)--(12,-2)--(19,-2)--(19,-10)--(12,-10);
				
				\draw[]
				;
				\draw[dashed]
				(1) -- (3)
				
				(3) -- (4)
				;
				\draw[dotted]
				(1) -- (4)
				;
				
			\end{tikzpicture}

			\caption{Graph $G$ shown with vertices partitioned into sets  $B_e(u,v)$, $N_{G[S_e]}[u]$, and $C_e(u,v)$. $S_e$ is represented by dashed edges and $S_f$ is represented by dotted edges.}
			\label{fig:AllECs2}
		\end{figure}
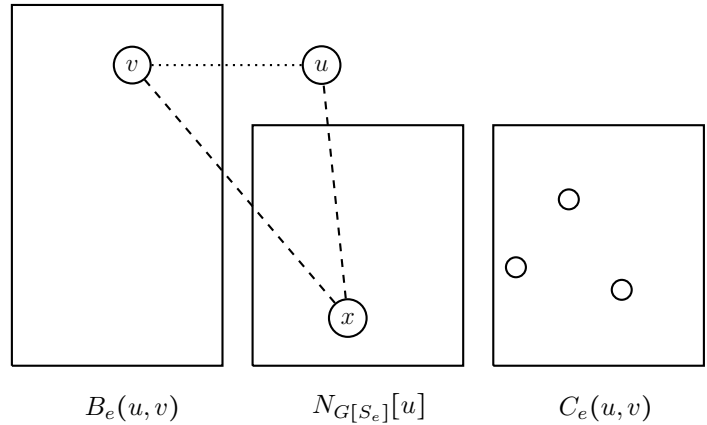
		
		The vertex $v$ is a neighbour of $x$ via an edge that is not in $S_{f}$, and $v$ is a neighbour of $u$ via an edge in $S_{f}$.
		This implies that $u$ is in $N_{G[S_{f}]}[x]$ since otherwise, there would exist an $S_{f}$-path from $u$ to $v$ that contains no vertices in $N_{G[S_{f}]}[x]$, contradicting our supposition that $B_{f}(x,v)$ only contains $v$ (see Figure~\ref{fig:AllECs3}).
		However, the edge $xu$ is in $S_e$, which is distinct from $S_{f}$.
		This is a contradiction. 			
		So $V(S_e) \neq V(S_f)$.

				\begin{figure}[H]
			\centering

			\begin{tikzpicture}[-,-=stealth', auto,node distance=1.5cm,
				thick,scale=0.4, main node/.style={scale=0.8,circle,draw,font=\sffamily\Large\bfseries}]
				
				\node[main node] (1) 					        {$v$};			
				\node[main node] (3)  [below right = 3cm and 1.025cm of 1]        {$u$}; 
				\node[main node] (4)  [right = 2cm of 1]	       {$x$};			
				\node[main node] (7)  [below right= 1.5cm and 3cm of 4]	       {};			
				\node[main node] (8)  [below right = 1cm and 0.5cm of 7]        {};
				\node[main node] (9)  [below left = 0.7cm and 0.5cm of 7]        {};
				\node[draw=none, fill=none] (100)  [below = 4cm of 1]	       {$B_{f}(x,v)$};			
				\node[draw=none, fill=none] (101)  [right = 1.3cm of 100]        {$N_{G[S_{f}]}[x]$};
				\node[draw=none, fill=none] (102)  [right = 1.3cm of 101]        {$C_{f}(x,v)$};

				\draw
				(-4,-10)--(-4,2)--(3,2)--(3,-10)--(-4,-10)
				(4,-10)--(4,-2)--(11,-2)--(11,-10)--(4,-10)
				(12,-10)--(12,-2)--(19,-2)--(19,-10)--(12,-10);
				
				\draw[]
				;
				\draw[dashed]
				
				(3) -- (4)
				(1) -- (4)
				;
				\draw[dotted]
				(1) -- (3)
				;
				
			\end{tikzpicture}

			\caption{Graph $G$ shown with vertices partitioned into sets $B_{f}(x,v)$, $N_{G[S_{f}]}[x]$, and $C_{f}(x,v)$. $S_e$ is represented by dashed edges and $S_f$ is represented by dotted edges. As $u$ must be in both $N_{G[S_{f}]}[x]$ and $B_{f}(x,v)$, a contradiction arises.}
			\label{fig:AllECs3}
		\end{figure}
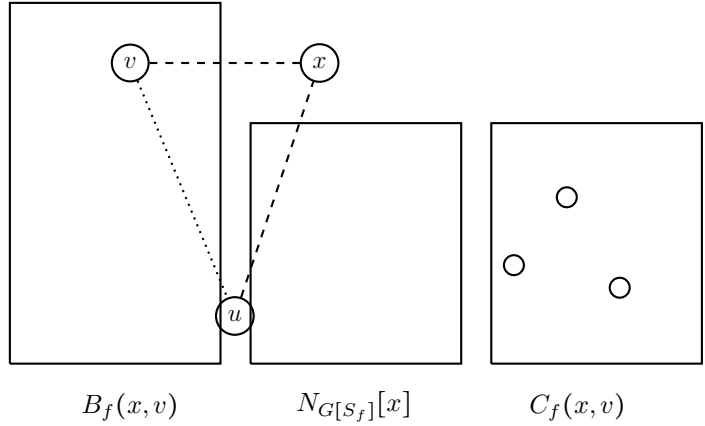
	\end{proof}

\begin{corollary}\label{cor:subset}
	Let $G$ be a connected graph.
	If $\mathcal{C}_G$ has exactly two equivalence classes, $S_e$ and $S_f$, then either $V(S_e) \subset V(S_f)$ or $V(S_f) \subset V(S_e)$.
\end{corollary}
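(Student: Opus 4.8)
The plan is to reduce the statement to a single comparability claim and then settle that claim with a shortest-path argument driven by Corollary~\ref{cor:dichromaticargument}. Since $\mathcal{C}_G$ has exactly two equivalence classes, we have $E(G) = S_e \cup S_f$ with $S_e \cap S_f = \emptyset$. Lemma~\ref{lem:noteveryec} already guarantees that $V(S_e) \neq V(S_f)$, so the only thing left to prove is that $V(S_e)$ and $V(S_f)$ are comparable under inclusion; once comparability is established, the inequality supplied by Lemma~\ref{lem:noteveryec} immediately upgrades the containment to a proper one. I would therefore argue by contradiction, assuming the two vertex sets are incomparable, so that there exist vertices $a \in V(S_e) \setminus V(S_f)$ and $b \in V(S_f) \setminus V(S_e)$.

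The first observation I would record is that, because there are only two classes, every edge incident with $a$ must lie in $S_e$ (an incident $S_f$-edge would place $a$ in $V(S_f)$), and symmetrically every edge incident with $b$ lies in $S_f$. Next I would choose a shortest path $P = p_0 p_1 \cdots p_k$ from $a$ to $b$ in $G$ (which exists since $G$ is connected), and prove the key claim that all edges of $P$ belong to a single equivalence class. Indeed, if two consecutive edges $p_{i-1}p_i$ and $p_i p_{i+1}$ lay in different classes, then since they share the vertex $p_i$, Corollary~\ref{cor:dichromaticargument} would force $p_{i-1}p_{i+1} \in E(G)$; but $p_{i-1}$ and $p_{i+1}$ are at distance two along $P$, so this chord would yield a strictly shorter walk from $a$ to $b$, contradicting the minimality of $P$. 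Hence consecutive edges of $P$ always share a class, and transitivity along the path forces every edge of $P$ into one class.

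To finish, I would note that the first edge $p_0 p_1$ is incident with $a$ and so lies in $S_e$; by the claim, every edge of $P$ then lies in $S_e$. In particular the last edge $p_{k-1}p_k$ lies in $S_e$, yet it is incident with $b$, whence $b \in V(S_e)$ --- contradicting the choice of $b$. This rules out incomparability and completes the argument. I expect the only delicate point to be the bookkeeping in the shortest-path claim: one must check that a transition between the two classes genuinely occurs strictly inside $P$ (guaranteed here because the edge at $a$ is an $S_e$-edge while the edge at $b$ is an $S_f$-edge) and that the resulting distance-two chord really does contradict minimality, noting also that the degenerate case $k=1$ is handled directly by the last-edge step. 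The conceptual substance --- that the two vertex sets are distinct --- is already carried by Lemma~\ref{lem:noteveryec}, so the corollary requires only this comparability step.
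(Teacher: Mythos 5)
Your proof is correct. Note, however, that the paper offers no proof of this corollary at all: it is stated immediately after Lemma~\ref{lem:noteveryec} as though it were a direct consequence, even though that lemma only yields $V(S_e) \neq V(S_f)$, not comparability of the two vertex sets. The comparability step you isolated is exactly the missing content, and your shortest-path argument supplies it: with $a \in V(S_e) \setminus V(S_f)$ and $b \in V(S_f) \setminus V(S_e)$, every edge at $a$ lies in $S_e$ and every edge at $b$ lies in $S_f$ (since the two classes exhaust $E(G)$), while Corollary~\ref{cor:dichromaticargument} forbids a class change at any internal vertex of a shortest $a$--$b$ path, since the resulting chord would shorten the path. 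It is worth pointing out that this claim --- shortest paths use edges from only one equivalence class --- is precisely Theorem~\ref{thm:pathsingleEC} of the paper, proved later by the identical chord-shortcut argument, and your endpoint contradiction mirrors the proof of Corollary~\ref{cor:twopendants}; since those proofs depend only on Corollary~\ref{cor:dichromaticargument}, there is no circularity in deploying that machinery here, and in effect you have re-derived it inline. Your final step, invoking Lemma~\ref{lem:noteveryec} to upgrade containment to proper containment, is also correct. So your argument is sound and in fact more complete than the paper's treatment, which leaves the key comparability step implicit.
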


Using Lemma \ref{lem:noteveryec} we provide our main result of this section: a classification of those connected graphs that admit a non-trivial quasi-transitive $2$-edge-colouring.
	
	\begin{theorem} \label{thm:HF1F2}		
		A connected graph $G$ is \colourable\ if and only if there exists an  induced proper subgraph $H$ of $G$ such that 		
		\begin{itemize}
			\item $H$ is connected;
			\item $2 \leq |V(H)| \leq n-1$; and
			\item for every vertex $v$ in $V(G) \setminus V(H)$, if $v$ is adjacent to a vertex in $V(H)$, then $v$ is adjacent to every vertex in $V(H)$.
		\end{itemize}  
	\end{theorem}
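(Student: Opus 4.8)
The plan is to reduce both directions to the combinatorial statement that $\mathcal{C}_G$ has at least two equivalence classes. By Theorem~\ref{thm:equivalencerelation2} the classes of $\mathcal{C}_G$ are exactly the sets $S_e$, and by Corollary~\ref{cor:countingColourings} a connected graph with $k$ classes has exactly $2^k$ quasi-transitive $2$-edge-colourings. When $k=1$ these two colourings are the monochromatic (trivial) ones, while when $k\geq 2$ there are at least four colourings and hence a nontrivial one. Thus $G$ is \colourable\ if and only if there exist edges $e,f\in E(G)$ with $S_e\neq S_f$, and I will prove that this holds if and only if an $H$ as described exists.

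For the reverse implication, suppose such an $H$ exists. Since $H$ is connected with $|V(H)|\geq 2$ it contains an edge $e$, and Lemma~\ref{lem:ECHF1F2} gives $S_e\subseteq E(H)$. Because $|V(H)|\leq n-1$ and $G$ is connected, some vertex of $V(G)\setminus V(H)$ has positive degree, so there is an edge $f$ with an endpoint outside $V(H)$; then $f\notin E(H)$ and hence $f\notin S_e$. As $f\in S_f$, this forces $S_f\neq S_e$, so $\mathcal{C}_G$ has at least two classes and $G$ is \colourable. This direction is essentially immediate from Lemma~\ref{lem:ECHF1F2}.

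For the forward implication, assume there are edges with $S_e\neq S_f$. By Lemma~\ref{lem:noteveryec}, $V(S_e)\neq V(S_f)$; since both are contained in $V(G)$, at least one of them is a proper subset of $V(G)$, so after relabelling I may assume $V(S_e)\subset V(G)$. I will take $H=G[V(S_e)]$, the subgraph induced on $V(S_e)$. Then $2\leq |V(S_e)|\leq n-1$, the lower bound because $e$ alone contributes two endpoints, and $H$ is connected since $G[S_e]$ is connected by Corollary~\ref{cor:connected} and $H$ contains all of its edges. The only remaining point is the universality condition.

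The heart of the argument, and the step I expect to be the main obstacle, is showing that any vertex $v\in V(G)\setminus V(S_e)$ adjacent to a vertex of $V(S_e)$ is adjacent to \emph{every} vertex of $V(S_e)$. I will prove this by induction on the distance in $G[S_e]$ from a fixed neighbour $w_0\in V(S_e)$ of $v$. For the inductive step, if $u\in V(S_e)$ has a neighbour $u'$ in $G[S_e]$ (so $u'u\in S_e$) with $vu'\in E(G)$ already known, then since $v\notin V(S_e)$ the edge $vu'$ lies outside $S_e$, whence $S_{vu'}\neq S_{u'u}=S_e$; applying Corollary~\ref{cor:dichromaticargument} to the incident pair $vu',u'u$ yields $vu\in E(G)$. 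As $G[S_e]$ is connected, this propagates adjacency to $v$ across all of $V(S_e)$, establishing the universality condition and completing the construction of $H$. This inductive propagation is closely analogous to the one used in the proof of Lemma~\ref{lem:noteveryec}, with Corollary~\ref{cor:dichromaticargument} serving as the workhorse throughout.
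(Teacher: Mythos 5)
Your proposal is correct and follows essentially the same route as the paper: both directions reduce to whether $\mathcal{C}_G$ has at least two equivalence classes, the easy direction seals off $E(H)$ from the rest of the graph (you cite Lemma~\ref{lem:ECHF1F2} where the paper argues directly that no induced $P_3$ crosses $E(H)$), and the hard direction takes $H=G[V(S_e)]$ for a class with $V(S_e)\neq V(G)$ obtained from Lemma~\ref{lem:noteveryec}, then proves the universality condition by the same induction along $S_e$-paths using Corollary~\ref{cor:dichromaticargument}. The only differences are cosmetic choices of which previously established results to cite for the routine steps.
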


	\begin{proof}
		Let $G$ be a connected graph.
		Suppose first that there exists an induced proper subgraph $H$ of $G$ such that 
		\begin{itemize}
	\item $H$ is connected;
	\item $2 \leq |V(H)| \leq n-1$; and
	\item for every vertex $v$ in $V(G) \setminus V(H)$, if $v$ is adjacent to a vertex in $V(H)$, then $v$ is adjacent to every vertex in $V(H)$.
\end{itemize}  		
		From the definition of $H$, it follows that there does not exist an induced copy of $P_3$ in $G$ containing exactly one edge from $E(H)$. 
		Therefore  each of $E(H)$ and $E(G) \setminus E(H)$ arise as the union of some number of equivalence classes under equivalence relation $\mathcal{C}_G$.
		Therefore, the equivalence relation $\mathcal{C}_G$ contains at least two equivalence classes, which means that $G$ is \colourable\ by Theorem~\ref{thm:equivalencerelation2}.
 	
		Suppose now that $G$ is \colourable.
		Thus by Theorem \ref{thm:equivalencerelation2}, the equivalence relation $\mathcal{C}_G$ has at least two equivalence classes.
		By Lemma~\ref{lem:noteveryec}, there exists some equivalence class $S_e$ of $\mathcal{C}_G$ such that $V(S_e) \neq V(G)$.
		Let $H$ be the subgraph induced by $V(S_e)$.
		By Corollary~\ref{cor:connected}, $H$ is connected.
		Since $S_e$ is an equivalence class and there exists at least one vertex in $V(G) \setminus V(S_e)$, it follows that $2 \leq |V(H)| \leq n-1$.

		Let $v$ be a vertex in $V(G) \setminus V(S_e)$ that is adjacent to a vertex $u$ in $V(S_e)$. 
		Since $v$ is in $V(G) \setminus V(S_e)$, the edge $uv$ is not in $S_e$.
		The vertex $u$ has some neighbour $w$ such that $uw$ is in $S_e$.
		By Corollary~\ref{cor:dichromaticargument}, since $v$ is in $V(G) \setminus V(S_e)$, the edge $vw$ exists.
		Use this as a base case and induct on the distance from $u$ along the shortest $S_e$-$k$-path to show that $v$ is adjacent to every vertex in $H$.
		Suppose that for some $k \geq 1$, every vertex $x$ that exists on an $S_e$-$k$-path that also contains $u$ is such that $vx$ is in $E(G)$.
		Let $y$ be a vertex which does not exist on any $S_e$-$k$-path that contains $u$, but does exist on an $S_e$-$(k+1)$-path that contains $u$.
		The vertex $y$ is adjacent to a vertex $z$ that exists on an $S_e$-$k$-path that contains $u$.
		Thus $vz$ is in $E(G)$ and, by Corollary~\ref{cor:dichromaticargument}, $vy$ is in $E(G)$.
		Therefore, $v$ is adjacent to every vertex in $H$.
		So for every vertex $u$ in $V(G) \setminus V(H)$, if $u$ is adjacent to a vertex in $V(H)$, then $u$ is adjacent to every vertex in $V(H)$.
		Thus, our conclusion holds.  
	\end{proof}

	Contrasting this result with the analogous result for quasi-transitively orientable graphs (Theorem~\ref{thm:ghouila}), we see a significant difference in the resulting classification. 
	We return to quasi-transitively orientable graphs in Section \ref{sec:QTOrient}.
	For now, however, we highlight the following difference between quasi-transitively orientable graphs and \colourable\ graphs.

\begin{corollary}\label{cor:noForbidden}
	The family of quasi-transitively colourable graphs admits no forbidden subgraph characterization.
\end{corollary}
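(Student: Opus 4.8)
The plan is to show that the family of \colourable\ graphs fails to be hereditary, i.e.\ fails to be closed under taking induced subgraphs, and to conclude from this that no forbidden subgraph characterization can exist. The underlying principle I would invoke is the standard one: if a graph class $\mathcal{P}$ admitted a characterization by a (possibly infinite) family $\mathcal{F}$ of forbidden subgraphs, then $\mathcal{P}$ would be closed under the subgraph relation, because that relation is transitive and a copy of some $F \in \mathcal{F}$ sitting inside a subgraph $H$ of $G$ is also a copy of $F$ inside $G$; the same reasoning applies verbatim to forbidden \emph{induced} subgraphs. So it is enough to exhibit one \colourable\ graph $G$ possessing an induced subgraph $H$ that is not \colourable.

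For the witness I would take $H = P_4$ with vertex set $\{a,b,c,d\}$ and edges $ab, bc, cd$, and let $G$ be obtained from $H$ by adjoining one new vertex $x$ adjacent to each of $a,b,c,d$. First I would confirm that $P_4$ is not \colourable: the induced copies of $P_3$ on $\{a,b,c\}$ and on $\{b,c,d\}$ force $c(ab)=c(bc)=c(cd)$ in every quasi-transitive $2$-edge-colouring, so all three edges lie in one equivalence class and only monochromatic colourings survive. (Alternatively, $P_4$ has no connected module with between $2$ and $3$ vertices, so Theorem~\ref{thm:HF1F2} yields the same conclusion.)

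Next I would verify that $G$ is \colourable\ by applying Theorem~\ref{thm:HF1F2} to the subgraph $H$ induced on $\{a,b,c,d\}$: it is connected, it satisfies $2 \le |V(H)| \le |V(G)|-1 = 4$, and the only vertex of $G$ outside $V(H)$, namely $x$, is adjacent to every vertex of $V(H)$, so the homogeneity hypothesis of Theorem~\ref{thm:HF1F2} holds. Hence $G$ is \colourable. One can also see this directly by colouring the three path edges with one colour and the four edges incident to $x$ with the other: every induced copy of $P_3$ in $G$ (namely those on $\{a,b,c\}$, $\{b,c,d\}$, $\{a,x,c\}$, $\{a,x,d\}$, $\{b,x,d\}$) is then monochromatic.

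Finally I would assemble the pieces: $P_4$ is an induced subgraph, and in particular a subgraph, of the \colourable\ graph $G$, while $P_4$ itself is not \colourable; hence the family of \colourable\ graphs is closed under neither induced subgraphs nor arbitrary subgraphs, and by the principle above it can have no forbidden subgraph characterization. I do not anticipate a genuinely hard computational step here; the point requiring the most care is the clean statement of the principle connecting forbidden subgraph characterizations to heredity, together with fixing the reading of ``\colourable'' as admitting a \emph{nontrivial} quasi-transitive $2$-edge-colouring (under the trivial reading every graph qualifies and the claim would be vacuous).
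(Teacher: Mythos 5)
Your proof is correct and takes essentially the same route as the paper: both arguments rest on the heredity principle (a forbidden-subgraph family must be closed under subgraphs) together with the same key construction, namely that adding a universal vertex to a graph yields a \colourable\ graph by Theorem~\ref{thm:HF1F2}. The only difference is quantification --- the paper runs the construction universally (every graph $J$ embeds as a subgraph of a \colourable\ graph), whereas you instantiate it at $J = P_4$; your version also makes explicit the verification that some graph ($P_4$) is not \colourable, a fact the paper's argument needs (else the empty forbidden family would suffice) but leaves implicit.
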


\begin{proof}  
	For all graphs $J$, there exists a connected graph $H$ that contains $J$ as a subgraph.
	Let $G$ be the graph created by joining $H$ and $K_1$.
	By Theorem~\ref{thm:HF1F2}, this graph $G$ is \colourable.
	Thus every graph is a subgraph of some quasi-transitively colourable graph.
	Therefore, the family of quasi-transitively colourable graphs admits no forbidden subgraph characterization.
\end{proof}

Theorem \ref{thm:HF1F2} gives a classification of \colourable\ graphs based on the existence of an induced subgraph with particular properties.
So the lack of existence of such a subgraph, and the existence of a unique such subgraph give rise to the following classifications of those graphs which admit only the trivial quasi-transitive $2$-edge-colouring and those which admit a unique quasi-transitive $2$-edge-colouring.

	\begin{corollary}\label{cor:noColouring}
		$G$ is not \colourable\ if and only if for every induced proper connected subgraph $H$ of $G$ such that $E(H) \neq \emptyset$, there exists some vertex $u$ in $V(G) \setminus V(H)$ that is adjacent to at least one vertex in $V(H)$, but not adjacent to all vertices in $V(H)$.
	\end{corollary}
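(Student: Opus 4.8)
The plan is to obtain this statement as the logical contrapositive of Theorem~\ref{thm:HF1F2}, since the condition described here is exactly the negation of the one appearing there (and, as in that theorem, we take $G$ to be connected). First I would invoke Theorem~\ref{thm:HF1F2}, which asserts that a connected graph $G$ is \colourable\ precisely when there is an induced proper connected subgraph $H$ with $2 \le |V(H)| \le n-1$ such that every vertex of $V(G) \setminus V(H)$ adjacent to some vertex of $V(H)$ is in fact adjacent to all of $V(H)$. Negating the biconditional, $G$ fails to be \colourable\ if and only if no subgraph $H$ of this form exists, which is to say that for every induced proper connected subgraph $H$ in the relevant size range, the ``join'' condition must fail.

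The second step is to spell out what the failure of the join condition means for a fixed $H$. The join condition is the universally quantified statement ``for every $v \in V(G) \setminus V(H)$ adjacent to a vertex of $V(H)$, the vertex $v$ is adjacent to every vertex of $V(H)$''; its negation is the existential statement ``there exists $u \in V(G) \setminus V(H)$ that is adjacent to at least one vertex of $V(H)$ but not to all of them,'' which is precisely the condition appearing in the corollary. Threading this negation through the universal quantifier over $H$ yields the stated biconditional.

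The only real care needed---and the single place where the argument is more than pure propositional logic---is reconciling the description of the admissible subgraphs $H$ in the two statements. The corollary quantifies over induced proper connected subgraphs $H$ with $E(H) \neq \emptyset$, whereas Theorem~\ref{thm:HF1F2} quantifies over induced proper connected subgraphs with $2 \le |V(H)| \le n-1$. I would observe that for an induced subgraph these ranges coincide: $H$ is a proper induced subgraph exactly when $V(H) \subsetneq V(G)$, equivalently $|V(H)| \le n-1$ (an induced subgraph is determined by its vertex set, so $V(H)=V(G)$ forces $H=G$); and for a connected $H$ the requirement $E(H) \neq \emptyset$ is equivalent to $|V(H)| \ge 2$. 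Hence the two families of candidate subgraphs agree, and the corollary follows. I expect no genuine obstacle here, only this bookkeeping check; the substance of the result is already contained in Theorem~\ref{thm:HF1F2}.
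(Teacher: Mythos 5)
Your proposal is correct and takes exactly the paper's route: the paper offers no separate argument for this corollary, presenting it as the immediate negation of Theorem~\ref{thm:HF1F2}, which is precisely what you do. Your bookkeeping check --- that for a connected induced subgraph $H$ the condition $E(H) \neq \emptyset$ is equivalent to $|V(H)| \geq 2$, and properness to $|V(H)| \leq n-1$ --- is the only non-trivial content, and you handle it correctly.
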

	
		\begin{theorem} \label{thm:UniqueHF1F2}
		A connected graph $G$ with $n$ vertices is \uniquelycolourable\ if and only if there exists exactly one induced proper subgraph $H$ of $G$ such that 	
		\begin{itemize}
			\item $H$ is connected;
			\item $2 \leq |V(H)| \leq n-1$; and 
			\item for every vertex $u$ in $V(G) \setminus V(H)$, if $u$ is adjacent to a vertex in $V(H)$, then $u$ is adjacent to every vertex in $V(H)$.
		\end{itemize} 		
	\end{theorem}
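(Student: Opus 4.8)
The plan is to convert the statement into one about the number of equivalence classes of $\mathcal{C}_G$ and then read the subgraphs $H$ directly off those classes. By Corollary~\ref{cor:countingColourings}, a connected graph whose relation $\mathcal{C}_G$ has $k$ classes admits $2^{k}$ quasi-transitive $2$-edge-colourings; exactly two of these are monochromatic, so it has $2^{k}-2$ nontrivial ones. Thus $G$ is \uniquelycolourable{} precisely when $2^{k}-2=2$, that is, when $\mathcal{C}_G$ has exactly two classes. It therefore suffices to show that $\mathcal{C}_G$ has exactly two classes if and only if $G$ has exactly one induced subgraph of the stated form (call such a subgraph \emph{admissible}). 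For the dictionary between classes and admissible subgraphs I would reuse the two halves of the proof of Theorem~\ref{thm:HF1F2}: if $H$ is admissible then $E(H)$ is a union of equivalence classes, and conversely for every class $S$ with $V(S)\neq V(G)$ the graph $G[V(S)]$ is admissible (it is connected by Corollary~\ref{cor:connected}). The bookkeeping is then controlled by two facts: an admissible $H$ is connected on at least two vertices and so has no isolated vertex, whence $V(H)=V(E(H))$ and $H$ is determined by the union of classes equal to $E(H)$; and, by Lemma~\ref{lem:noteveryec}, distinct classes have distinct vertex sets, so at most one class can have vertex set equal to $V(G)$.

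For the backward direction I would argue by contraposition. Since an admissible subgraph exists, Theorems~\ref{thm:HF1F2} and~\ref{thm:equivalencerelation2} give at least two classes; suppose there were at least three. By Lemma~\ref{lem:noteveryec} at most one class has vertex set $V(G)$, so at least two classes $S$ and $S'$ satisfy $V(S)\neq V(G)$ and $V(S')\neq V(G)$. Then $G[V(S)]$ and $G[V(S')]$ are both admissible, and since $V(S)\neq V(S')$ they are distinct, contradicting the assumed uniqueness. Hence there are exactly two classes, and $G$ is \uniquelycolourable{}.

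For the forward direction, assume $\mathcal{C}_G$ has exactly two classes $S_e$ and $S_f$. By Corollary~\ref{cor:subset} their vertex sets are nested, say $V(S_e)\subsetneq V(S_f)$; as every vertex of the connected graph $G$ is incident with an edge, every vertex lies in $V(S_e)\cup V(S_f)=V(S_f)$, so $V(S_f)=V(G)$. Any admissible $H$ has $E(H)$ a nonempty union of classes drawn from $\{S_e,S_f\}$ with $V(H)=V(E(H))\neq V(G)$. The choices $E(H)=S_f$ and $E(H)=S_e\cup S_f$ each force $V(H)=V(S_f)=V(G)$ and are excluded, leaving only $E(H)=S_e$, hence $V(H)=V(S_e)$; and $G[V(S_e)]$ is indeed admissible. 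So there is exactly one admissible subgraph, as required.

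I expect the genuine obstacle to be the forward-direction bookkeeping, specifically verifying that an admissible subgraph really is pinned down by a single union of classes. The delicate point is that $E(G[V(S_e)])$ must equal $S_e$ exactly, rather than some larger set that absorbs part of $S_f$; this follows by combining $V(H)=V(E(H))$, the equality $V(S_f)=V(G)$, and the fact that $E(H)$ is a union of classes, but it is the one place where the two descriptions of an admissible subgraph, by its vertex set and by its edge set, have to be reconciled. Everything else reduces to direct citations of the earlier lemmas and corollaries.
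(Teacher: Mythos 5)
Your proposal is correct and follows essentially the same route as the paper's proof: both reduce \uniquelycolourable{} to ``$\mathcal{C}_G$ has exactly two equivalence classes'' and then translate between equivalence classes with proper vertex set and admissible subgraphs using Theorem~\ref{thm:HF1F2}, Corollary~\ref{cor:connected}, Lemma~\ref{lem:noteveryec}, and Corollary~\ref{cor:subset}. The only real difference is organizational: where the paper's backward direction splits into cases according to whether the surplus classes lie in $E(H)$ or in $E(G)\setminus E(H)$, you observe uniformly that three or more classes would give two classes with proper vertex sets and hence two distinct admissible subgraphs --- a mild streamlining of the same argument.
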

	
	\begin{proof}
		Suppose that a graph $G$ is \uniquelycolourable.
		By Theorem~\ref{thm:equivalencerelation2}, there are exactly two equivalence classes of $\mathcal{C}_G$, call them $S_e$ and $S_f$ for edges $e$ and $f$ in $E(G)$. 
		By Corollary~\ref{cor:subset}, either $V(S_e) \subset V(S_f)$ or $V(S_f) \subset V(S_e)$. 
		By Theorem~\ref{thm:HF1F2}, there exists an induced proper subgraph $H$ of $G$ such that 		
		\begin{itemize}
		\item $H$ is connected;
		\item $2 \leq |V(H)| \leq n-1$; and 
		\item for every vertex $u$ in $V(G) \setminus V(H)$, if $u$ is adjacent to a vertex in $V(H)$, then $u$ is adjacent to every vertex in $V(H)$.
		\end{itemize}   
		
		Since every induced copy of $P_3$ has either both or neither of its edges in $E(H)$, the edges of $H$ make up either an equivalence class or a union of equivalence classes. 
		Since we know that there are only two equivalence classes in $G$, the edges of $H$ are one equivalence class, call it $S_e$, and the rest of the edges comprise the other equivalence class, call it $S_f$. 
		Thus if there is a second induced subgraph $H^\prime$ satisfying our list of requirements, then it must be for the subgraph induced by $V(S_f)$ to make up $H^\prime$. 
		However, by Theorem~\ref{thm:HF1F2}, $V(H^\prime)$ must be a proper subset of $V(G)$, and we know that either $V(S_e)$ or $V(S_f)$ is equal to $V(G)$. 
		Thus, there exists exactly one proper induced subgraph $H$ of $G$ such that
		\begin{itemize}
		\item $H$ is connected;
		\item $2 \leq |V(H)| \leq n-1$; and 
		\item for every vertex $u$ in $V(G) \setminus V(H)$, if $u$ is adjacent to a vertex in $V(H)$, then $u$ is adjacent to every vertex in $V(H)$.
		\end{itemize}
			
		Now suppose there exists a unique proper induced subgraph $H$ of $G$ such that
		\begin{itemize}
			\item $H$ is connected;
			\item $2 \leq |V(H)| \leq n-1$; and 
			\item for every vertex $u$ in $V(G) \setminus V(H)$, if $u$ is adjacent to a vertex in $V(H)$, then $u$ is adjacent to every vertex in $V(H)$.
		\end{itemize}
		By Theorem~\ref{thm:HF1F2}, there exists a nontrivial quasi-transitive 2-edge-colouring of $G$.
		Thus since $E(H)$ is either an equivalence class or some union of equivalence classes, there are at least two equivalence classes of $\mathcal{C}_G$. 
		Toward a contradiction, suppose that $\mathcal{C}_G$ has more than two equivalence classes. This yields two cases: Either $E(H)$ contains multiple equivalence classes of $\mathcal{C}_G$, or $E(G) \setminus E(H)$ contains multiple equivalence classes of $\mathcal{C}_G$.

		\emph{Case 1: $E(H)$ contains multiple equivalence classes of $\mathcal{C}_G$.}\\
		Let $e$ be an edge such that $S_e \subset E(H)$.
		Since $S_e$ is an equivalence class, every induced copy of $P_3$ in $G$ is such that either both or neither of its edges belong to $S_e$.
		The graph $G[S_e]$ is connected by Corollary~\ref{cor:connected}.
		We know $2 \leq |V(G[S_e])| \leq n-1$ since $S_e$ contains at least one edge and $S_e \subset E(H)$.
		Since $S_e$ is an equivalence class, every vertex $u$ in $V(G) \setminus V(S_e)$ is such that if $u$ is adjacent to a vertex in $V(S_e)$, then $u$ is adjacent to every vertex in $V(S_e)$.
		Thus the choice of $H$ is not unique since $S_e$ satisfies all of the necessary requirements.
		This is a contradiction.
				
		\emph{Case 2: $E(G) \setminus E(H)$ contains multiple equivalence classes of $\mathcal{C}_G$.}\\
		By Lemma~\ref{lem:noteveryec}, if any two equivalence classes, $S_e$ and $S_f$, are such that $V(S_e) = V(S_f)$, then $S_e = S_f$.
		Therefore, since $E(G) \setminus E(H)$ contains multiple equivalence classes, some equivalence class, $S_e \subset E(G) \setminus E(H)$, must be such that $V(S_e) \neq V(G)$.
		Let $S_d$ be an equivalence class such that $S_d \subset (E(G) \setminus E(H))$ and $V(S_d) \neq V(G)$.
		The graph $G[S_d]$ is connected, by Corollary~\ref{cor:connected}, and $V(S_d)$ has between $2$ and $n-1$ vertices.
		Also since every induced copy of $P_3$ either has both or neither of its edges in $S_d$, we have that for every vertex $u$ in $V(G) \setminus V(S_d)$, if $u$ is adjacent to a vertex in $V(S_d)$, then $u$ is adjacent to every vertex in $V(S_d)$.
		Thus the choice of $H$ is not unique, and this is a contradiction. 
		Therefore, $G$ is \uniquelycolourable.

	\end{proof}
	
	Using Theorem \ref{thm:UniqueHF1F2}, we provide the following example of an infinite family of graphs that admit a unique quasi-transitive $2$-edge-colouring.
	
	Let $P_k = w_0,w_1,w_2,\dots, w_{k-1}$.
	Construct $G$ from $P_k$ by adding vertices $u$ and $v$ and edges $uv, uw_0$ and $vw_0$.
	By observation we have $S_{uv} = \{uv\}$ and $S_e = \{uw_0, vw_0\} \cup \{w_i,w_{i+1} | 0 \leq i \leq k-1\}$ for all $e \neq uv$.
	Therefore $E(G)/\mathcal{C} = \{ S_{uv},  S_{uw_0}\}$.
	
	By Corollary \ref{cor:countingColourings}, it follows that $G$ is \uniquelycolourable.
	In the statement of Theorem \ref{thm:UniqueHF1F2}, the edge $uv$ plays the role of $H$.
		
	In this construction, each of the subgraphs induced by the two equivalence classes satisfy the criteria of Corollary \ref{cor:noColouring}.
	That is, the equivalence classes induce an partition of the graph into two subgraphs that admit only trivial quasi-transitive $2$-edge-colourings.
	Such a partition exists in general.
	
	\begin{theorem}\label{thm:ECnotcolourable}
		Let $G$ be a graph. If $S_e$ is an equivalence class of $\mathcal{C}_G$, then $G[S_e]$ is not \colourable. 
	\end{theorem}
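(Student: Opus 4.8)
The plan is to prove that $G[S_e]$, regarded as a graph in its own right, admits only monochromatic quasi-transitive $2$-edge-colourings, which is exactly the assertion that $G[S_e]$ is not \colourable. Since $G[S_e]$ is connected (Corollary~\ref{cor:connected}), it suffices to show that the relation $\mathcal{C}_{G[S_e]}$ has a single equivalence class: once every edge of $G[S_e]$ is forced to take the colour of $e$ in every quasi-transitive $2$-edge-colouring, Corollary~\ref{cor:countingColourings} (or directly the definition of $\mathcal{C}_{G[S_e]}$) tells us there are exactly $2^1=2$ such colourings, namely the two monochromatic ones.

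The first thing I would pin down is the transfer property, which is also where the care is needed. Although $G[S_e]$ need not be an \emph{induced} subgraph of $G$ (two vertices of $V(S_e)$ may be joined by an edge of $G$ that does not lie in $S_e$), every induced copy of $P_3$ of $G$ whose two edges \emph{both} lie in $S_e$ is also an induced copy of $P_3$ of $G[S_e]$: if $xy,yz\in S_e$ with $xz\notin E(G)$, then $xz\notin S_e=E(G[S_e])$, so $x,y,z$ remains induced in $G[S_e]$. I want to stress that only this direction is needed; the reverse can fail precisely when $xz\in E(G)\setminus S_e$.

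Next I would establish that the edges of $S_e$ are ``chain-connected'' by induced copies of $P_3$ living entirely inside $S_e$: for every $f\in S_e$ there is a sequence $e=h_0,h_1,\dots,h_m=f$ of edges of $S_e$ in which each consecutive pair $h_i,h_{i+1}$ forms an induced copy of $P_3$ in $G$. For this I would let $S'$ be the set of edges of $S_e$ reachable from $e$ by such a chain, and check that $S'\in\mathcal{S}_e$. Indeed $e\in S'\subseteq S_e$, and if some $d\notin S'$ formed an induced $P_3$ with an edge $s\in S'$, then either $d\in S_e$ (whence appending $d$ to a chain reaching $s$ puts $d\in S'$) or $d\notin S_e$ (contradicting $S_e\in\mathcal{S}_e$). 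As $S_e$ is the unique smallest member of $\mathcal{S}_e$ (Corollary~\ref{lem:fewestunique}) and $S'\subseteq S_e$, this gives $S'=S_e$.

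Finally I would combine the two steps: by the transfer property each such chain is a chain of induced copies of $P_3$ in $G[S_e]$, and along any induced copy of $P_3$ the quasi-transitive condition forces its two edges to share a colour. Hence in every quasi-transitive $2$-edge-colouring of $G[S_e]$ each edge takes the colour of $e$, so $\mathcal{C}_{G[S_e]}$ has a single class and $G[S_e]$ is not \colourable. I expect the only genuine obstacle to be the non-induced nature of $G[S_e]$ isolated in the second paragraph; once the one-directional transfer of induced copies of $P_3$ is secured, the remainder is the same minimality argument used throughout Section~\ref{sec:SECHF1F2}.
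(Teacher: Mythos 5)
Your proof is correct, and it takes a genuinely different route from the paper's. The paper argues by contradiction through its classification machinery: if $G[S_e]$ were \colourable, Theorem~\ref{thm:HF1F2} would produce a connected induced proper subgraph $H$ of $G[S_e]$ whose outside neighbours see all of $V(H)$; combining this with the all-or-nothing adjacency of vertices outside $V(S_e)$, the paper concludes that $E(H)$ would itself be an equivalence class of $\mathcal{C}_G$ properly contained in $S_e$, a contradiction. You instead argue directly from the definitions: every edge of $S_e$ is linked to $e$ by a chain of induced copies of $P_3$ whose edges all lie in $S_e$ (your set $S'$, which lies in $\mathcal{S}_e$ and hence equals $S_e$ by the minimality and uniqueness in Corollary~\ref{lem:fewestunique}); these copies of $P_3$ survive passage to $G[S_e]$; and along any induced copy of $P_3$ a quasi-transitive $2$-edge-colouring must be constant, so every colouring of $G[S_e]$ is monochromatic. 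What the paper's route buys is brevity, given that Theorem~\ref{thm:HF1F2} and Lemma~\ref{lem:ECHF1F2} are already in place. What your route buys is independence from that machinery, plus rigour on two points the paper treats loosely: your chain-connectivity claim is proved in full, whereas the paper only asserts the analogous statement inside the proof of Lemma~\ref{lem:ECHF1F2}; and your explicit one-directional transfer of induced copies of $P_3$ confronts the fact that $G[S_e]$ is generally \emph{not} an induced subgraph of $G$ --- a subtlety the paper's own proof glosses over, since the subgraph $H$ it extracts is induced only in $G[S_e]$, yet is then fed into $G$-level statements such as Lemma~\ref{lem:ECHF1F2}. As a small bonus, your chain argument also re-proves Corollary~\ref{cor:connected}, since consecutive edges of a chain share a vertex.
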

	
	\begin{proof}
		Let $G$ be a graph, let $n$ be the number of vertices in $G$, and let $e$ be an edge in $E(G)$.
		Toward a contradiction, suppose that $G[S_e]$ is \colourable.
		By Theorem~\ref{thm:HF1F2}, there exists an induced proper subgraph $H$ of $G[S_e]$ such that
		\begin{itemize}
		\item $H$ is connected;
		\item $2 \leq |V(H)| \leq n-1$; and 
		\item for every vertex $u$ in $V(G) \setminus V(H)$, if $u$ is adjacent to a vertex in $V(H)$, then $u$ is adjacent to every vertex in $V(H)$.
		\end{itemize}	
		Since $S_e$ is an equivalence class of $\mathcal{C}_G$, for every vertex $u$ in $V(G) \setminus V(S_e)$, if $u$ is adjacent to a vertex in $V(S_e)$, then $u$ is adjacent to every vertex in $V(S_e)$.
		So every vertex in $V(G) \setminus V(H)$ that is adjacent to some vertex in $V(H)$ must be adjacent to every vertex in $V(H)$. 
		Thus for all $h$ in $E(H)$, the subset $S_h$ of $E(G)$ must equal $E(H)$.
		Therefore, $E(H)$ is an equivalence class of $\mathcal{C}_G$.
		This is a contradiction because $S_e$ is an equivalence class of $\mathcal{G}$ and $E(H)$ is a proper subset of $S_e$.
		Therefore, $G[S_e]$ is not \colourable. 
	\end{proof}
	
	We conclude this section with two more results that further aid in understanding the structure of graphs that contain multiple equivalence classes.
	
	\begin{theorem}\label{thm:pathsingleEC}
		Let $G$ be a connected graph and let $u$ and $v$ be in $V(G)$. Every shortest path between $x$ and $y$ in $G$ must only contain edges from a single equivalence class of $\mathcal{C}_G$.
	\end{theorem}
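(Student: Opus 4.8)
The plan is to argue edge-by-edge along the path, with Corollary~\ref{cor:dichromaticargument} doing all the real work. Fix a shortest path $P = x_0, x_1, \dots, x_\ell$ between two vertices of $G$. If $\ell \leq 1$ the conclusion is immediate, so assume $\ell \geq 2$ and consider any two consecutive edges $x_{i-1}x_i$ and $x_i x_{i+1}$ of $P$, which share the vertex $x_i$. The heart of the argument is to show that these two edges must lie in a common equivalence class of $\mathcal{C}_G$.

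To see this, suppose toward a contradiction that $S_{x_{i-1}x_i} \neq S_{x_i x_{i+1}}$. Since $G$ is connected, Corollary~\ref{cor:dichromaticargument}, applied with $u = x_{i-1}$, $v = x_i$, and $w = x_{i+1}$, forces $x_{i-1}x_{i+1} \in E(G)$. But then I would replace the length-$2$ subpath $x_{i-1}, x_i, x_{i+1}$ by the single chord $x_{i-1}x_{i+1}$, obtaining the sequence $x_0, \dots, x_{i-1}, x_{i+1}, \dots, x_\ell$. Because the vertices of $P$ are distinct, deleting $x_i$ leaves no repetition, so this sequence is a genuine path from $x_0$ to $x_\ell$ of length $\ell - 1$. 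This contradicts the choice of $P$ as a shortest path between its endpoints, and hence $S_{x_{i-1}x_i} = S_{x_i x_{i+1}}$.

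Having established that every adjacent pair of edges of $P$ belongs to the same equivalence class, I would finish by transitivity: the relation $\mathcal{C}_G$ is an equivalence relation (as noted just before Theorem~\ref{thm:equivalencerelation2}), so all of the edges $x_0x_1, x_1x_2, \dots, x_{\ell-1}x_\ell$ lie in a single equivalence class. I do not anticipate a genuine obstacle here, as the proof is short once Corollary~\ref{cor:dichromaticargument} is brought to bear. The only point meriting care is confirming that the chord $x_{i-1}x_{i+1}$ supplied by the corollary really does shorten the path; this holds precisely because $x_{i-1}$ and $x_{i+1}$ are distinct, non-consecutive vertices of $P$, so the substitution introduces no repeated vertex and strictly reduces the length.
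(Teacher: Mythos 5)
Your proposal is correct and follows essentially the same argument as the paper: assume two consecutive edges of a shortest path lie in different equivalence classes, invoke Corollary~\ref{cor:dichromaticargument} to obtain the chord $x_{i-1}x_{i+1}$, and derive a contradiction by shortening the path. Your added care about the shortcut being a genuine path and the explicit transitivity step at the end are minor refinements of details the paper leaves implicit.
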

	
	\begin{proof}
		Let $G$ be a graph and let $x$ and $y$ be in $V(G)$. 
		Toward a contradiction, suppose there exists a shortest path $P=v_1 \dots v_k$ from $x$ to $y$, with $v_1=x$, $v_k=y$, and that some pair of incident edges in $P$, $v_{i-1}v_i$ and $v_{i}v_{i+1}$, are such that $S_{v_{i-1}v_i}$ is not equal to $S_{v_iv_{i+1}}$. By Corollary~\ref{cor:dichromaticargument}, the edge $v_{i-1}v_{i+1}$ must exist because otherwise an induced $P_3$ with edges from two different equivalence classes would result. The contradiction arises because the shortest path from $x$ to $y$, $P$, can be made shorter by replacing the edges $v_{i-1}v_i$ and $v_{i}v_{i+1}$ by the edge $v_{i-1}v_{i+1}$. Therefore, every shortest path between $x$ and $y$ in $G$ must only contain edges from a single equivalence class of $\mathcal{C}_G$.
	\end{proof}

	\begin{corollary}\label{cor:twopendants}
		Let $G$ be a connected graph. 
		In $G$, there is a maximum of one equivalence class of $\mathcal{C}_G$ that contains a vertex as an endpoint that is not found as an endpoint in any other equivalence class.
	\end{corollary}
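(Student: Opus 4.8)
The plan is to argue by contradiction and reduce everything to Theorem~\ref{thm:pathsingleEC}. First I would restate the hypothesis in convenient language: say that an equivalence class $S$ of $\mathcal{C}_G$ \emph{owns} a vertex $w$ if $w \in V(S)$ but $w \notin V(S')$ for every equivalence class $S' \neq S$. Since every edge of $G$ lies in exactly one equivalence class and $w$ is an endpoint only in $S$, owning $w$ is equivalent to saying that \emph{every} edge of $G$ incident with $w$ belongs to $S$. The corollary then asserts that at most one equivalence class owns a vertex, so I would suppose toward a contradiction that two distinct equivalence classes $S_e$ and $S_f$ each own a vertex, say $S_e$ owns $a$ and $S_f$ owns $b$.

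Next I would exploit connectivity to link these two owned vertices. Let $P$ be a shortest path from $a$ to $b$ in $G$. Because $a$ is incident only with edges of $S_e$, the first edge of $P$ lies in $S_e$; symmetrically, because $b$ is incident only with edges of $S_f$, the last edge of $P$ lies in $S_f$. (If $a$ and $b$ happen to be adjacent these two extreme edges coincide, and then the single edge $ab$ lies simultaneously in $S_e$ and $S_f$.)

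Then I would invoke Theorem~\ref{thm:pathsingleEC}: every shortest path in $G$ contains edges from only a single equivalence class of $\mathcal{C}_G$. Applied to $P$, this forces its first and last edges into the same equivalence class, so $S_e = S_f$, contradicting $S_e \neq S_f$. Hence at most one equivalence class of $\mathcal{C}_G$ owns a vertex, which is exactly the claim.

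The only point requiring care -- and the closest thing to an obstacle -- is the endpoint bookkeeping: I must be sure that an owned vertex genuinely has \emph{all} of its incident edges in its owning class, so that the extreme edges of $P$ are pinned to $S_e$ and $S_f$ respectively, and I must explicitly cover the length-one path $a \sim b$ so that the argument does not tacitly assume $P$ has two distinct edges. Both observations are immediate from the definition of owning, so once the setup is in place the proof is essentially a one-line application of Theorem~\ref{thm:pathsingleEC}.
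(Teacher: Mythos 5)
Your proof is correct and takes essentially the same approach as the paper: argue by contradiction, take a shortest path between the two vertices that lie in only one class each, and apply Theorem~\ref{thm:pathsingleEC} to force the two classes to coincide. Your write-up is somewhat more explicit than the paper's (pinning the first and last edges of the path to $S_e$ and $S_f$ respectively, and covering the length-one path), but these are refinements of the identical argument rather than a different route.
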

	
	\begin{proof}
		Let $v$ be a vertex that is an endpoint in exactly one equivalence class of $\mathcal{C}_G$, say $v$ is in $V(S_e)$ for some edge $e$ in $E(G)$. 
		Toward a contradiction, suppose that $u$ is in $V(S_f)$ for some edge $f$ in $E(G)$, and that $u$ is an endpoint in exactly one equivalence class as well. 
		Let $P$ be the shortest path connecting $u$ to $v$. 
		By Theorem~\ref{thm:pathsingleEC}, $P$ only contains edges from a single equivalence class.
		This is a contradiction because $u$ and $v$ are not endpoints of edges in the same equivalence class.
		Therefore, there is a maximum of one equivalence class of $\mathcal{C}_G$ that contains a vertex as an endpoint that is not found as an endpoint in any other equivalence class.
	\end{proof}
	
	\section{Intersection of Equivalence Classes} \label{sec:3equivClasses}	 
	In Section~\ref{sec:SECHF1F2}, we characterized graphs with at least two equivalence classes (Theorem~\ref{thm:HF1F2}) and graphs with exactly two equivalence classes (Theorem~\ref{thm:UniqueHF1F2}). 
	In this section, we characterize graphs with exactly three equivalence classes. 
	We approach our study by considering the structure of the subgraph induced by a pair of equivalence classes.
	
	By Corollary~\ref{cor:subset}, when graphs have exactly two equivalence classes, $S_e$ and $S_f$, either $V(S_e) \subset V(S_f)$ or $V(S_f) \subset V(S_e)$. 
	However, when a graph has more than two equivalence classes, this subset property does not necessarily hold.
	Let $S_e$ and $S_f$ be distinct equivalence classes of $\mathcal{C}_G$. 
	It is not necessary for there to be any vertices in $V(S_e) \cap V(S_f)$. 
	However if $G$ is a connected graph with at least two equivalence classes, then there must exist some vertex $v$ and some pair of equivalence classes, $S_e$ and $S_f$, such that $v$ is in both $V(S_e)$ and $V(S_f)$. 
	For the purposes of this section, we will suppose, without loss of generality, the set $V(S_e) \cap V(S_f)$ is nonempty (see Figure~\ref{fig:int1}).
	
	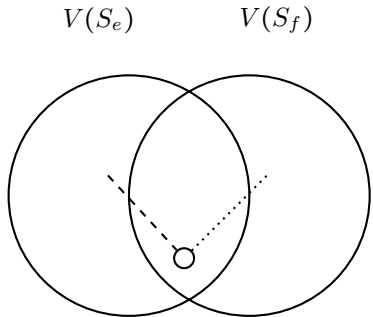
\begin{figure}[H]
	\centering
	
	\begin{tikzpicture}[-,-=stealth', auto,node distance=1.5cm,
		thick,scale=0.4, main node/.style={scale=0.8,circle,draw,font=\sffamily\Large\bfseries}]
		
		\node[draw=none,fill=none] (1) 					        {};
		\node[draw=none,fill=none] (100) [above = 1.5cm of 1]		{$V(S_e)$};
		\node[main node] (2)  [below right = 1cm and 0.9cm of 1]        {};
		\node[draw=none, fill=none] (3)  [above right = 1cm and 1cm of 2]        {}; 
		\node[draw=none,fill=none] (101) [above = 1.5cm of 3]		{$V(S_f)$};

		\draw[]
		(1,-1) circle (4cm)
		(5,-1) circle (4cm);
		
		\draw[dashed]
		(1) -- (2);
		
		\draw[dotted]
		(2) -- (3);
		
	\end{tikzpicture}  
	\caption{The vertex sets of two equivalence classes $S_e$ (represented by dashed edges) and $S_f$ (represented by dotted edges) shown with a nonempty intersection}
	\label{fig:int1}
\end{figure}

	Every edge with endpoints in both $V(S_e) \setminus V(S_f)$ and $V(S_f) \setminus V(S_e)$ must be in neither $S_e$ nor $S_f$. 
	There exist cases in which a single vertex is in the intersection of two equivalence classes and cases where the intersection contains two non-adjacent vertices (both cases arise in $K_4 \setminus \{e\}$, see Figure~\ref{fig:K4-e}).

		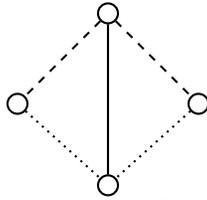
\begin{figure}[H]
	\centering

	\begin{tikzpicture}[-,-=stealth', auto,node distance=1.5cm,
		thick,scale=0.4, main node/.style={scale=0.8,circle,draw,font=\sffamily\Large\bfseries}]
		
		\node[main node] (1) 					        {};			
		\node[main node] (2)  [below left = 1cm and 1cm of 1]         {};
		\node[main node] (3)  [below right = 1cm and 1cm of 1]        {}; 
		\node[main node] (4)  [below = 2cm of 1]	       {};

		\draw[]
		(1) -- (4)
		;
		\draw[dashed]
		(1) -- (2)
		(1) -- (3)
		;
		\draw[dotted]
		(3) -- (4)
		(2) -- (4);
		
	\end{tikzpicture}

	\caption{$K_4 \setminus \{e\}$ shown with the three equivalence classes represented by full, dashed, and dotted lines.}
	\label{fig:K4-e}
\end{figure}
	
	For the purposes of this section, we will suppose that $V(S_e) \setminus V(S_f)$, $V(S_f) \setminus V(S_e)$, and $V(S_e) \cap V(S_f)$ are non-empty. We will prove a number of results regarding the types of graphs in which these three vertex sets are nonempty, culminating in a classification of graphs with exactly three equivalence classes (Theorem~\ref{thm:3Equiv}). We begin with a rather specific result, but one that will be useful in proving Theorem~\ref{thm:redmiddle}.
	
\begin{lemma}\label{lem:tinylemma}
	Let $G$ be a connected graph and let $e,f \in E(G)$ be such that $S_e$ and $S_f$ are equivalence classes of $\mathcal{C}_G$. 
	If
	\begin{itemize}
		\item $V(S_e) \setminus V(S_f)$ is nonempty;
		\item $u,v,y \in V(S_e) \cap V(S_f)$ and $x \in V(S_f) \setminus V(S_e)$;
		\item $uv, vx \in S_f$, $vy \notin S_f$, and $uy \in S_e$;
	\end{itemize}
	then $ux \in E(G)$.
\end{lemma}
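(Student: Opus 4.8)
The plan is to produce the edge $ux$ by three successive applications of Corollary~\ref{cor:dichromaticargument}, manufacturing two auxiliary edges along the way. The only inputs I need are the class memberships listed in the hypotheses together with the observation that $S_e \neq S_f$. The latter is immediate: since $x \in V(S_f)\setminus V(S_e)$, the vertex sets $V(S_e)$ and $V(S_f)$ differ, so the two equivalence classes cannot coincide. This is what makes the corollary available at each step, since it fires precisely when a length-two path has its two edges in distinct classes.

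First I would establish that $vy$ is genuinely an edge of $G$. The bare hypothesis $vy \notin S_f$ only asserts that the pair $vy$ is not an element of the edge set $S_f$, which would be vacuously true if $vy$ were a non-edge, so this must be derived. The edges $uy$ and $uv$ share the vertex $u$, with $uy \in S_e$ and $uv \in S_f$; since $S_e \neq S_f$, Corollary~\ref{cor:dichromaticargument} (with $u$ as the shared vertex) yields $vy \in E(G)$. Next I would produce the edge $xy$: now that $vy$ is known to be an edge, pair it with $vx$. These share $v$, and $vx \in S_f$ while $vy \notin S_f$, so $S_{vx} \neq S_{vy}$; Corollary~\ref{cor:dichromaticargument} (with $v$ as the shared vertex) then gives $xy \in E(G)$.

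Finally I would close the argument at $y$. Because $x \in V(S_f)\setminus V(S_e)$, the vertex $x$ is incident to no edge of $S_e$, and in particular $xy \notin S_e$; meanwhile $uy \in S_e$. Hence $uy$ and $xy$ share the vertex $y$ and lie in different equivalence classes, so one last application of Corollary~\ref{cor:dichromaticargument} delivers $ux \in E(G)$, as required.

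The argument is short, and the main thing to get right is the order of operations. The corollary is useless on the natural length-two path $u$--$v$--$x$, whose edges both lie in $S_f$, and there is no ``mixed'' incidence reaching $x$ at the outset; so the mild obstacle is recognising that one must first create a mixed incidence at $v$ (which requires first showing $vy$ is an edge) and thereby a mixed incidence at $y$, rather than hoping to reach $x$ in a single step. I note in passing that the hypothesis ``$V(S_e)\setminus V(S_f)$ is nonempty'' is not actually needed for this conclusion; it is part of the standing assumptions of the section, and $S_e \neq S_f$ already follows from $x \in V(S_f)\setminus V(S_e)$.
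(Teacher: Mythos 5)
Your proof is correct and takes essentially the same route as the paper's: the paper applies Corollary~\ref{cor:dichromaticargument} to the pair $vx$, $vy$ to obtain $xy \in E(G)$, observes that $xy \notin S_e$ because $x \in V(S_f)\setminus V(S_e)$, and applies the corollary once more to $xy$, $uy$ to conclude $ux \in E(G)$. Your preliminary step deriving $vy \in E(G)$ from $uy \in S_e$ and $uv \in S_f$ is a small improvement rather than a different approach --- the paper's proof silently treats $vy$ as an edge (as it is in the lemma's one application inside Theorem~\ref{thm:redmiddle}, where $vy$ is produced before the lemma is invoked) --- and your remark that the nonemptiness of $V(S_e)\setminus V(S_f)$ is never used is likewise accurate.
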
	

\begin{proof}
	Let $G$ be a connected graph and let $e,f \in (G)$ be such that $S_e$ and $S_f$ are equivalence classes of $\mathcal{C}_G$. 
	Suppose
	\begin{itemize}
		\item $V(S_e) \setminus V(S_f)$ is nonempty;
		\item $u,v,y \in V(S_e) \cap V(S_f)$ and $x \in V(S_f) \setminus V(S_e)$;
		\item $uv, vx \in S_f$, $vy \notin S_f$, and $uy \in S_e$.
	\end{itemize}
	
		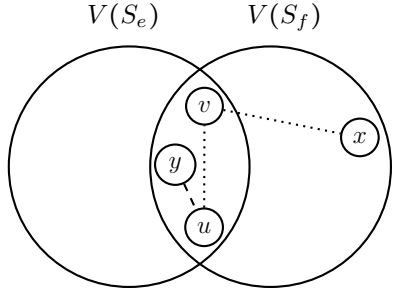
\begin{figure}[H]
	\centering

	\begin{tikzpicture}[-,-=stealth', auto,node distance=1.5cm,
		thick,scale=0.4, main node/.style={scale=0.8,circle,draw,font=\sffamily\Large\bfseries}]
		
		\node[draw=none, fill=none] (1) 					        {};			
		\node[main node] (2)  [below right = 0.05cm and 1.8cm of 1]        {$u$};
		\node[main node] (4)  [above left = 0.45cm and 0.001cm of 2]	       {$y$};			
		\node[main node] (5)  [above right = 0.4cm and 0.01cm of 4]        {$v$};
		\node[main node] (6)  [below right = 0.05cm and 1.7cm of 5]        {$x$};
		\node[draw=none,fill=none] (100) [above right = 2cm and 0.3cm of 1]		{$V(S_e)$};
		\node[draw=none,fill=none] (101) [right = 0.9cm of 100]		{$V(S_f)$};

		\draw[]
		(2.8,1.1) circle (4cm)
		(7.5,1.1) circle (4cm);
		
		\draw[]
		;
		\draw[dashed]
		(4) -- (2)
		;
		\draw[dotted]
		(2) -- (5)
		(5) -- (6);
		
	\end{tikzpicture}

	\caption{Theoretical intersection $V(S_e) \cap V(S_f)$ containing both endpoints of an edge in $S_f$ and both endpoints of an edge in $S_e$. Dotted edges are in $S_f$ and dashed edges are in $S_e$.}
	\label{fig:int4}
\end{figure}
	
	By Corollary~\ref{cor:dichromaticargument}, since $vy$ and $vx$ belong to different equivalence classes, the edge $xy$ must exist.
	Since $x \in V(S_f) \setminus V(S_e)$, the edge $xy$ is not in $S_e$.
	Therefore by Corollary~\ref{cor:dichromaticargument}, since $xy$ and $uy$ belong to different equivalence classes, the edge $ux$ must exist. 	
\end{proof}

\begin{theorem} \label{thm:redmiddle}
	Let $G$ be a connected graph and let $e,f \in E(G)$ be such that $S_e$ and $S_f$ are equivalence classes of $\mathcal{C}_G$. 
	If $V(S_e) \setminus V(S_f)$, $V(S_f) \setminus V(S_e)$, and $V(S_e) \cap V(S_f)$ are all nonempty, then $V(S_e) \cap V(S_f)$ does not contain any pair of vertices $u$ and $v$ such that $uv$ is in either $S_e$ or $S_f$. 
\end{theorem}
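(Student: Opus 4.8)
The plan is to argue by contradiction, using the symmetry of the hypotheses in $e$ and $f$ to reduce to a single case and then invoking Lemma~\ref{lem:ECHF1F2}. Since the three sets $V(S_e)\setminus V(S_f)$, $V(S_f)\setminus V(S_e)$, and $V(S_e)\cap V(S_f)$ appear symmetrically in the statement, it suffices to rule out a pair $u,v\in V(S_e)\cap V(S_f)$ with $uv\in S_f$; the case $uv\in S_e$ then follows by interchanging $e$ and $f$. So I would assume such an edge $uv\in S_f$ exists, with both endpoints in the intersection, and work toward a contradiction.

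The heart of the argument is to show that the induced subgraph $H=G[V(S_e)]$ satisfies the hypotheses of Lemma~\ref{lem:ECHF1F2}. It is connected because it contains $G[S_e]$ as a spanning subgraph, and $G[S_e]$ is connected by Corollary~\ref{cor:connected}. It is a proper induced subgraph with $2\le |V(H)|\le n-1$: the lower bound holds because $S_e$ contains the edge $e$, and the upper bound holds because $V(S_f)\setminus V(S_e)$ is nonempty, so some vertex of $G$ lies outside $V(S_e)$. The remaining, and key, requirement is the join condition: every vertex $z\in V(G)\setminus V(S_e)$ that is adjacent to a vertex of $V(S_e)$ must be adjacent to \emph{all} of $V(S_e)$. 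This is exactly the property established by the $S_e$-path induction in the second half of the proof of Theorem~\ref{thm:HF1F2}: if $zw\in E(G)$ with $w\in V(S_e)$, then $zw\notin S_e$ (as $z\notin V(S_e)$), and for any $S_e$-neighbour $w'$ of $w$ Corollary~\ref{cor:dichromaticargument} forces $zw'\in E(G)$; inducting along the connected graph $G[S_e]$ reaches every vertex of $V(S_e)$.

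With $H$ verified, the contradiction is immediate. Since $u,v\in V(S_e)\cap V(S_f)\subseteq V(S_e)$ and $uv\in E(G)$, the edge $uv$ lies in $E(H)$, so Lemma~\ref{lem:ECHF1F2} gives $S_{uv}\subseteq E(H)=E(G[V(S_e)])$. But $uv\in S_f$ means $S_{uv}=S_f$, so every edge of $S_f$ has both endpoints in $V(S_e)$; that is, $V(S_f)\subseteq V(S_e)$, contradicting the hypothesis that $V(S_f)\setminus V(S_e)$ is nonempty. Hence no such edge $uv\in S_f$ can lie inside the intersection, and by symmetry neither can an edge of $S_e$.

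The step I expect to require the most care is the verification of the join condition for $H=G[V(S_e)]$, since this is precisely what licenses the application of Lemma~\ref{lem:ECHF1F2}; the rest is bookkeeping. This route also has the advantage of sidestepping the delicate edge-by-edge case analysis (in particular the triangle configurations in which a forced neighbour $y$ of $u$ satisfies $vy\in S_f$) that a direct argument through Corollary~\ref{cor:dichromaticargument} and Lemma~\ref{lem:tinylemma} would otherwise demand.
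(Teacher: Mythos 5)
Your proof is correct, but it takes a genuinely different route from the paper's. The paper argues locally: assuming $uv\in S_f$ with $u,v\in V(S_e)\cap V(S_f)$, it places $uv$ in an induced copy of $P_3$, say $uvx$ with $vx\in S_f$, and then splits into two cases according to whether some vertex of the intersection that is an $S_f$-neighbour of $v$ also has an $S_e$-neighbour in $V(S_e)\setminus V(S_f)$; each case is resolved by chaining Corollary~\ref{cor:dichromaticargument} (together with Lemma~\ref{lem:tinylemma} and the connectivity of $G[S_e]$ from Corollary~\ref{cor:connected}) until either $ux\in E(G)$ contradicts $uvx$ being induced, or $G[S_e]$ is forced to be disconnected. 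You instead argue globally: you verify that the subgraph $H$ of $G$ induced by $V(S_e)$ satisfies the three hypotheses of Lemma~\ref{lem:ECHF1F2} --- the only nontrivial one being the join condition, which you re-derive by the same $S_e$-path induction used in the second half of the paper's proof of Theorem~\ref{thm:HF1F2} --- and then Lemma~\ref{lem:ECHF1F2} together with Theorem~\ref{thm:equalsets} traps $S_{uv}=S_f$ inside $E(H)$, giving $V(S_f)\subseteq V(S_e)$ and contradicting the nonemptiness of $V(S_f)\setminus V(S_e)$. Your route avoids the case analysis and Lemma~\ref{lem:tinylemma} altogether, and it proves something slightly stronger: whenever $V(S_f)\setminus V(S_e)\neq\emptyset$, no edge of $S_f$ has both endpoints anywhere in $V(S_e)$, not merely in the intersection (indeed you never use the hypothesis that $V(S_e)\cap V(S_f)$ is nonempty). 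The only cost is that the module-like property of $V(S_e)$ is not isolated as a standalone lemma in the paper, so you must repeat that induction inline rather than cite it; the paper's local argument, by contrast, leans entirely on lemmas it set up expressly for this theorem.
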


\begin{proof}
	Toward a contradiction, suppose that $u$ and $v$ are in $V(S_e) \cap V(S_f)$ and that $uv$ is in $S_f$. 
	In order for $S_f$ to not be a singleton set, $uv$ must exist in some induced copy of $P_3$ with another edge from $S_f$. 
	Without loss of generality, suppose $xv$ is in $S_f$ and that $uvx$ is an induced copy of $P_3$. 
	Every vertex in $V(S_e) \cap V(S_f)$ must be incident with an edge in $S_e$.
	
	\emph{Case 1: There does not exist a vertex in $V(S_e) \cap V(S_f)$ that is adjacent to $v$ with an edge in $S_f$ and also adjacent to a vertex in $V(S_e) \setminus V(S_f)$ with an edge in $S_e$.}\\
	
	So every edge in $S_e$ that is incident to $u$ must have both endpoints in $V(S_e) \cap V(S_f)$.
	Let $uy$ be such an edge.
	By Corollary~\ref{cor:dichromaticargument}, since $uv$ and $uy$ belong to different equivalence classes, the edge $vy$ must exist.
	By Lemma~\ref{lem:tinylemma}, if $vy \notin S_f$, then $ux \in E(G)$, contradicting $uvx$ being an induced copy of $P_3$. 
	So $vy \in S_f$.
	However, if every neighbour of $u$ via $S_e$ edges is a vertex in $V(S_e) \cap V(S_f)$ that is adjacent to $v$ via an $S_f$ edge, then there does not exist an $S_e$ edge with one endpoint in $V(S_e) \setminus V(S_f)$ and the other endpoint in $V(S_e) \cap V(S_f)$. 
	Therefore since $V(S_e) \setminus V(S_f)$ is nonempty, $G[S_e]$ is not connected.
	This contradicts Corollary~\ref{cor:connected}.
	
	\emph{Case 2: Some vertex in $V(S_e) \cap V(S_f)$ that is adjacent to $v$ with an edge in $S_f$ is also adjacent to a vertex in $V(S_e) \setminus V(S_f)$ with an edge in $S_e$.}\\
	
	Without loss of generality, suppose that $u$ is such a vertex.
	Let $y$ be a vertex in $V(S_e) \setminus V(S_f)$ such that $uy \in S_e$.
	By Corollary~\ref{cor:dichromaticargument}, since $uv$ and $uy$ belong to different equivalence classes, the edge $vy$ must exist.
	The edge $vy$ is not in $S_f$ since $y \in V(S_e) \setminus V(S_f)$.
	So by Corollary~\ref{cor:dichromaticargument}, since $vx$ and $vy$ belong to different equivalence classes, the edge $xy$ must exist.
	The edge $xy$ is not in $S_e$ since $x \in V(S_f) \setminus V(S_e)$.
	Finally by Corollary~\ref{cor:dichromaticargument}, since $uy$ and $xy$ belong to different equivalence classes, the edge $ux$ must exist.
	This contradicts $uvx$ being an induced copy of $P_3$.
	
	Therefore, there does not exist a pair of adjacent vertices $u,v \in V(S_e) \cap V(S_f)$ such that $uv$ is in either $S_e$ or $S_f$.

\end{proof}
	
	The following four results will build upon each other and be used to justify Theorem~\ref{thm:completetripartite}, and then finally yield our classification of those graphs that have exactly three equivalence classes (Theorem~\ref{thm:3Equiv}).
		
	\begin{lemma} \label{lem:allred}
		Let $G$ be a connected graph and let $S_e$ and $S_f$ be two equivalence classes of $\mathcal{C}_G$.
		If $V(S_e) \setminus V(S_f)$, $V(S_f) \setminus V(S_e)$, and $V(S_e) \cap V(S_f)$ are all nonempty, then every vertex in $V(S_e) \setminus V(S_f)$ is adjacent to every vertex in $V(S_f)$ and every vertex in $V(S_f) \setminus V(S_e)$ is adjacent to every vertex $V(S_e)$.		
	\end{lemma}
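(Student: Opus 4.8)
Fix the two equivalence classes $S_e$ and $S_f$ with all three of $V(S_e)\setminus V(S_f)$, $V(S_f)\setminus V(S_e)$, and $V(S_e)\cap V(S_f)$ nonempty. By symmetry between $e$ and $f$ it suffices to prove one of the two assertions, say that every vertex in $V(S_e)\setminus V(S_f)$ is adjacent to every vertex of $V(S_f)$; the other follows by interchanging the roles of $e$ and $f$. So I fix a vertex $a\in V(S_e)\setminus V(S_f)$ and aim to show $a$ is adjacent to every vertex of $V(S_f)$. The main tool throughout will be Corollary~\ref{cor:dichromaticargument}: whenever two incident edges lie in different equivalence classes, the third side of the triangle exists in $G$.

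**Strategy.** The natural approach is an induction along $S_f$-paths emanating from a convenient ``anchor'' vertex in the intersection $V(S_e)\cap V(S_f)$, propagating adjacency to $a$ one $S_f$-edge at a time. First I would locate a vertex $w\in V(S_e)\cap V(S_f)$ that is adjacent to $a$ via an $S_e$-edge: since $a\in V(S_e)$, it has an $S_e$-neighbour, and because $G[S_e]$ is connected (Corollary~\ref{cor:connected}) while $V(S_f)\setminus V(S_e)$ is nonempty, I can arrange that some $S_e$-neighbour $w$ of $a$ lies in the intersection (if every $S_e$-neighbour of $a$ avoided $V(S_f)$ for every such $a$, the intersection could not be reached by $S_e$-edges, contradicting connectivity of $G[S_e]$; I will need to verify this reduction carefully). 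With such a $w$ in hand, the edge $aw\in S_e$ together with any $S_f$-edge $wz$ at $w$ gives, by Corollary~\ref{cor:dichromaticargument}, the edge $az$, establishing the base case of the induction for $S_f$-neighbours of $w$.

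**The inductive step.** I would then induct on the length of a shortest $S_f$-path in $G[S_f]$ from $w$ to an arbitrary target vertex $t\in V(S_f)$. Suppose $a$ is adjacent to every vertex reachable from $w$ by an $S_f$-path of length at most $k$, and let $t$ be reachable by a shortest $S_f$-path of length $k+1$, with penultimate vertex $p$ (so $pt\in S_f$ and $a$ is adjacent to $p$ by the inductive hypothesis). The key point is that the edge $ap$ must lie outside $S_f$: since $a\in V(S_e)\setminus V(S_f)$, no edge incident to $a$ belongs to $S_f$. Then $ap\notin S_f$ and $pt\in S_f$ are incident edges in different equivalence classes, so Corollary~\ref{cor:dichromaticargument} forces $at\in E(G)$. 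Since $G[S_f]$ is connected, every vertex of $V(S_f)$ is reached, and $a$ is adjacent to all of $V(S_f)$ as claimed.

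**Anticipated obstacle.** The delicate part is the base case — guaranteeing that $a$ has an $S_e$-neighbour $w$ that actually lies in $V(S_e)\cap V(S_f)$, so that the induction has a vertex of $V(S_f)$ to start propagating from. It is conceivable that $a$'s $S_e$-neighbours all lie in $V(S_e)\setminus V(S_f)$. To handle this I would instead anchor the induction at the intersection and walk \emph{backwards} along an $S_e$-path inside $G[S_e]$ from some intersection vertex out to $a$: because $G[S_e]$ is connected, there is an $S_e$-path from any fixed $w_0\in V(S_e)\cap V(S_f)$ to $a$, and I can first establish (again via Corollary~\ref{cor:dichromaticargument}, using that intersection vertices have $S_f$-edges) that every vertex along that path is adjacent to a suitable vertex of $V(S_f)$, then transport the conclusion to $a$. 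Making this two-layer induction clean — one induction along $S_e$-paths to reach $a$, nested with or preceding the induction along $S_f$-paths to sweep out $V(S_f)$ — is where the real care is needed; the individual adjacency deductions are all routine applications of Corollary~\ref{cor:dichromaticargument} once the path structure is set up correctly.
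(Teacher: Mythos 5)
Your main induction --- sweeping $V(S_f)$ along shortest $S_f$-paths from an anchor adjacent to $a$, using Corollary~\ref{cor:dichromaticargument} together with the fact that no edge incident to $a$ can lie in $S_f$ --- is exactly the engine of the paper's proof, and your treatment of the special case where $a$ has an $S_e$-neighbour in $V(S_e)\cap V(S_f)$ is correct. The genuine gap is the case you flag yourself and never close: an arbitrary $a\in V(S_e)\setminus V(S_f)$ need not have such a neighbour (your connectivity argument only produces \emph{some} vertex of $V(S_e)\setminus V(S_f)$ with one, and incidentally it should cite nonemptiness of $V(S_e)\cap V(S_f)$, not of $V(S_f)\setminus V(S_e)$). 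Your proposed fix, the ``two-layer induction,'' is only a plan, and as sketched it stalls at its key point: propagating along an $S_e$-path $w_0,p_1,\dots,p_m=a$, Corollary~\ref{cor:dichromaticargument} applies to the incident pair $p_ip_{i+1}\in S_e$ and $p_iq$ only when $p_iq\notin S_e$; if your ``suitable vertex'' $q\in V(S_f)$ happens to lie in $V(S_e)\cap V(S_f)$, the edge $p_iq$ may itself belong to $S_e$ and no conclusion can be drawn. The whole step therefore turns on choosing $q$ in $V(S_f)\setminus V(S_e)$ --- a vertex incident with no $S_e$-edge at all, so that every edge toward it is automatically usable --- and on obtaining the initial adjacency from an $S_f$-edge joining $q$ to an intersection vertex. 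That placement is exactly what the paper secures with Theorem~\ref{thm:redmiddle} (every $S_f$-neighbour of an intersection vertex lies in $V(S_f)\setminus V(S_e)$); calling the remaining deductions ``routine once the path structure is set up correctly'' conceals that this choice \emph{is} the content of the step.

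For comparison, the paper closes the general case more economically than your plan. In its first stage it proves the two anchored statements simultaneously: the special vertex $a$ is adjacent to all of $V(S_f)$, \emph{and} the vertex $u\in V(S_f)\setminus V(S_e)$ (an $S_f$-neighbour of $x_0$, placed outside $V(S_e)$ by Theorem~\ref{thm:redmiddle}) is adjacent to all of $V(S_e)$. Then any other $z\in V(S_e)\setminus V(S_f)$ is already adjacent to $u$ by the second statement, and since $zu\notin S_f$, your own $S_f$-path induction started at $u$ finishes immediately. This also shows why your opening reduction ``by symmetry it suffices to prove one of the two assertions'' is slightly misleading: the proof of either assertion for \emph{all} vertices passes through the anchored version of the other. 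If you prefer to keep your two-layer structure, it can be repaired without Theorem~\ref{thm:redmiddle}: connectivity of $G[S_f]$ (Corollary~\ref{cor:connected}) yields an $S_f$-edge $wu_0$ with $w\in V(S_e)\cap V(S_f)$ and $u_0\in V(S_f)\setminus V(S_e)$; anchoring the $S_e$-path walk at $w$ with fixed target $u_0$ makes every propagation step legitimate, after which the $S_f$-sweep from $u_0$ completes the proof.
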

	
	\begin{proof}

		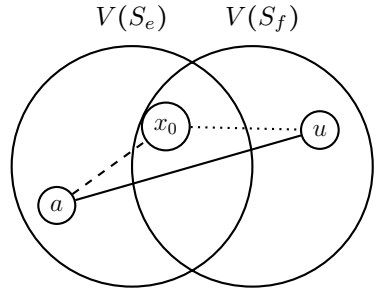
\begin{figure}[H]
	\centering
	
	\begin{tikzpicture}[-,-=stealth', auto,node distance=1.5cm,
		thick,scale=0.4, main node/.style={scale=0.8,circle,draw,font=\sffamily\Large\bfseries}]
		
		\node[main node] (1) 					        {$a$};			
		\node[draw=none, fill=none] (2)  [below right = 2cm and 5cm of 1]        {};
		\node[main node] (3)  [above left = 3cm and 1.5cm of 2]        {$u$}; 
		\node[draw=none, fill=none] (4)  [above = 2cm of 1]	       {};			
		\node[draw=none, fill=none] (5)  [above right = 2cm and 5cm of 4]        {};
		\node[main node] (7)  [above left = 3cm and 3.5cm of 2]         {$x_0$};
		\node[draw=none,fill=none] (100) [above right = 2cm and 0.2cm of 1]		{$V(S_e)$};
		\node[draw=none,fill=none] (101) [right = 0.5cm of 100]		{$V(S_f)$};

		\draw[]
		(2.5,1.3) circle (4cm)
		(6.5,1.3) circle (4cm);
		
		\draw[]
		(1) -- (3)
		;
		
		\draw[dashed]
		(1) -- (7)
		;
		
		\draw[dotted]
		(7) -- (3)
		;
		
	\end{tikzpicture}
	
	\caption{The vertex sets of two equivalence classes with nonempty intersection. Dotted edges are in $S_f$ and dashed edges are in $S_e$. The full edge is from a third equivalence class, $S_{au}$.}
	\label{fig:int2}
	
\end{figure}

		Let $x_0 \in V(S_e) \cap V(S_f)$, and $a \in V(S_e) \setminus V(S_f)$ such that $ax_0 \in S_e$. 
		Such a pair of vertices must exist because otherwise, with no edge with one endpoint in $V(S_e) \cap V(S_f)$ and one endpoint in $V(S_e) \setminus V(S_f)$, the graph $G[S_e]$ would be disconnected, contradicting Corollary~\ref{cor:connected}. 
		Since $x_0$ is in $V(S_e) \cap V(S_f)$, there exists a vertex $u$ such that $ux_0$ is in $S_f$.
		The vertex $u$ must be in $V(S_f) \setminus V(S_e)$ by Theorem~\ref{thm:redmiddle}. 
		By Corollary~\ref{cor:dichromaticargument}, since $ax_0$ and $ux_0$ are in different equivalence classes, the edge $au$ must exist.
		The vertex $a$ is not incident with any edges in $S_f$ and the vertex $u$ is not incident with any edges in $S_e$. 
		We will use induction to show that every vertex in $V(S_f)$ is adjacent to $a$ (and equivalently, every vertex in $V(S_e)$ is adjacent to $u$), inducting on the length of the shortest $S_f$-path connecting a vertex to $x_0$.
		By Theorem~\ref{thm:redmiddle}, every vertex that is adjacent to $x_0$ with an edge in $S_f$ must be in $V(S_f) \setminus V(S_e)$.
		Thus by Corollary~\ref{cor:dichromaticargument}, every vertex that is adjacent to $x_0$ with an edge in $S_f$ must also be adjacent to $a$ with an edge in neither $S_e$ nor $S_f$.
		Now suppose that every vertex $v$, such that the shortest $S_f$-path connecting $v$ to $x_0$ is length $k$, is adjacent to $a$. 
		If there does not exist some vertex $v$ such that the shortest $S_f$-path connecting $v$ to $x_0$ is length $k+1$, then $a$ is adjacent to every vertex in $V(S_f)$.
		Otherwise, let $P = x_0 \dots x_{k+1}$ be an $S_f$-$(k+1)$-path.
		Then by Corollary~\ref{cor:dichromaticargument}, since $a$ is adjacent to $x_k$ with an edge that is not in $S_f$, the edge $ax_{k+1}$ must exist.
		Therefore, by induction, the vertex $a$ is adjacent to every vertex in $S_f$ (and equivalently, every vertex in $V(S_e)$ is adjacent to $u$).
		
		We now prove that every other vertex in $V(S_e) \setminus V(S_f)$ is adjacent to every vertex in $V(S_f)$ (and equivalently, that every other vertex in $V(S_f) \setminus V(S_e)$ is adjacent to every vertex in $V(S_e)$).
		Let $z$ be a vertex in $V(S_e) \setminus V(S_f)$.
		By Corollary~\ref{cor:dichromaticargument}, since the vertex $z$ is adjacent to the vertex $u$, the vertex $z$ is also adjacent to every vertex which is adjacent to $u$ with an edge in $S_f$. 
		We will induct on the distance from $u$ along $S_f$-paths.
		Suppose that every vertex $y$, such that the shortest $S_f$-path connecting $y$ to $u$ is length $k$, is adjacent to $z$. 
		If there does not exist some vertex $y$ such that the shortest $S_f$-path connecting $y$ to $u$ is length $k+1$, then $z$ is adjacent to every vertex in $V(S_f)$.
		Otherwise, let $P = u y_1 \dots y_{k+1}$ be an $S_f$-$(k+1)$-path.
		Then by Corollary~\ref{cor:dichromaticargument}, since $z$ is adjacent to $y_k$ with an edge that is not in $S_f$, the edge $zy_{k+1}$ must exist.
		Therefore, by induction, the vertex $z$ is adjacent to every vertex in $S_f$.
		Thus every vertex in $V(S_e) \setminus V(S_f)$ is adjacent to every vertex in $V(S_f)$. 
		Equivalently, every vertex in $V(S_f) \setminus V(S_e)$ is adjacent to every vertex in $V(S_e)$.	
	\end{proof}
	
	\begin{corollary} \label{cor:allcross}
		Let $G$ be a connected graph, and let $S_e$ and $S_f$ be equivalence classes of $\mathcal{C}_G$. 
		If $V(S_e) \setminus V(S_f)$, $V(S_f) \setminus V(S_e)$, and $V(S_e) \cap V(S_f)$ are all nonempty, then every edge in $S_e$ and every edge in $S_f$ has an endpoint in $V(S_e) \cap V(S_f)$.
	\end{corollary}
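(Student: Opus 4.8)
The plan is to prove the equivalent statement that no edge of $S_e$ has both endpoints in $V(S_e)\setminus V(S_f)$ (and symmetrically for $S_f$). This suffices: an edge of $S_e$ has both endpoints in $V(S_e)$, and Theorem~\ref{thm:redmiddle} already forbids both endpoints from lying in $V(S_e)\cap V(S_f)$, so once the ``both endpoints in $V(S_e)\setminus V(S_f)$'' case is excluded, every edge of $S_e$ must have an endpoint in $V(S_e)\cap V(S_f)$. Since the hypotheses are symmetric in $e$ and $f$, handling $S_e$ also handles $S_f$. Throughout I would write $A = V(S_e)\setminus V(S_f)$ and $C = V(S_e)\cap V(S_f)$, so that $V(S_e) = A \cup C$ with both $A$ and $C$ nonempty.

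Toward a contradiction, I would suppose there is an edge $ab \in S_e$ with $a,b \in A$. The key device is to apply Lemma~\ref{lem:ECHF1F2} to the connected component $H$ of the induced subgraph $G[A]$ that contains $a$ and $b$. Because $ab$ is an edge of $G$ with both endpoints in $A$, it is an edge of $G[A]$, so $a$ and $b$ lie in a common component and $ab \in E(H)$; in particular $|V(H)| \geq 2$. Moreover $V(H) \subseteq A \subsetneq V(G)$ (the latter since $V(S_f)\setminus V(S_e)$ is nonempty), so $H$ is a connected induced proper subgraph with $2 \leq |V(H)| \leq n-1$.

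The main obstacle is verifying the remaining hypothesis of Lemma~\ref{lem:ECHF1F2}: every vertex $u \in V(G)\setminus V(H)$ adjacent to a vertex of $V(H)$ is adjacent to all of $V(H)$. I would split into cases on where $u$ lies. If $u \in A$, then $u$ belongs to a different component of $G[A]$ than $V(H)$; but any edge from $u$ to $V(H)$ would have both endpoints in $A$ and hence lie in $G[A]$, contradicting the separation of components, so this case is vacuous. If $u \in V(S_f)$ (in particular if $u \in C$ or $u \in V(S_f)\setminus V(S_e)$), then Lemma~\ref{lem:allred} gives that $u$ is adjacent to every vertex of $A$, hence to all of $V(H)$. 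The delicate remaining case is $u \in V(G)\setminus(V(S_e)\cup V(S_f))$, a vertex outside both classes: here I would use that $V(S_e)$ is the vertex set of an equivalence class, so $u$ cannot be adjacent to only part of $V(S_e)$. Concretely, if $u$ is adjacent to $w \in V(H) \subseteq V(S_e)$ and $ww' \in S_e$, then $uww'$ cannot be an induced $P_3$ (else $uw$ would be forced into $S_e$), so $u$ is adjacent to $w'$; inducting along the connected graph $G[S_e]$ (Corollary~\ref{cor:connected}) shows $u$ is adjacent to all of $V(S_e) \supseteq V(H)$. This reuses the adjacency-propagation argument already carried out in the proof of Theorem~\ref{thm:HF1F2}.

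With the hypotheses verified, Lemma~\ref{lem:ECHF1F2} yields $S_e = S_{ab} \subseteq E(H)$. To finish, I would exhibit an edge of $S_e$ not in $E(H)$: pick $w \in C$, which lies in $V(S_e)$ and is therefore incident to some edge $ww' \in S_e$; since $w \in C$ and $C \cap A = \emptyset$, the vertex $w$ is not in $V(H) \subseteq A$, so $ww' \notin E(H)$. This contradicts $S_e \subseteq E(H)$, completing the argument. I expect the case analysis for the outside-vertex condition---particularly the vertices lying outside $V(S_e)\cup V(S_f)$---to be the step requiring the most care, since it is the only place where the equivalence-class structure of $S_e$ (rather than Lemma~\ref{lem:allred}) must be invoked.
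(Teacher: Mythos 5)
Your proof is correct, and it reaches the same final contradiction as the paper, but by a noticeably different (and heavier) route. Both arguments share the same skeleton: assume some edge of one class has both endpoints outside the intersection, confine that entire equivalence class away from $V(S_e)\cap V(S_f)$, and then contradict the fact that every vertex of the nonempty intersection must be an endpoint of an edge of each class. The paper's confinement step, however, is a direct consequence of Lemma~\ref{lem:allred}: since $V(S_f)\setminus V(S_e)$ is joined to all of $V(S_e)$, no induced copy of $P_3$ can pair an edge with both endpoints in $V(S_f)\setminus V(S_e)$ with an edge having an endpoint in the intersection; as any edge forming an induced $P_3$ with an edge of $S_f$ lies in $S_f$ (hence has its endpoints in $V(S_f)$), the class generated from such an edge never escapes $V(S_f)\setminus V(S_e)$. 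Your route instead packages the confinement into Lemma~\ref{lem:ECHF1F2} applied to the component $H$ of $G[V(S_e)\setminus V(S_f)]$ containing the offending edge. This buys you a ready-made conclusion ($S_{ab}\subseteq E(H)$), but it costs you the verification of the lemma's adjacency hypothesis, including the case of vertices outside $V(S_e)\cup V(S_f)$, which the paper's proof never needs to touch; there you correctly re-run the propagation argument from the proof of Theorem~\ref{thm:HF1F2} using Corollary~\ref{cor:dichromaticargument} and Corollary~\ref{cor:connected}, so the gap is filled, just at the price of extra case analysis. One small superfluity: your appeal to Theorem~\ref{thm:redmiddle} in the reduction is unnecessary, since an edge with both endpoints in $V(S_e)\cap V(S_f)$ trivially has an endpoint there; excluding the ``both endpoints in $V(S_e)\setminus V(S_f)$'' case already yields the statement.
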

	
	\begin{proof}
		Let $G$ be a connected graph, and let $S_e$ and $S_f$ be equivalence classes of $\mathcal{C}_G$. 
		Toward a contradiction, suppose that $f$ has both endpoints in $V(S_f) \setminus V(S_e)$ and that both $V(S_e) \cap V(S_f)$ and $V(S_e) \setminus V(S_f)$ are nonempty.
		By Lemma~\ref{lem:allred}, the vertices in $V(S_e) \cap V(S_f)$ are joined to the vertices in $V(S_f) \setminus V(S_e)$.
		So there does not exist an induced copy of $P_3$ containing an edge with both endpoints in $V(S_f) \setminus V(S_e)$ and an edge with one endpoint in $V(S_f) \cap V(S_e)$.
		So $S_f$ does not contain any edges with an endpoint in $V(S_e) \cap V(S_f)$.
		This is a contradiction though as $V(S_e) \cap V(S_f)$ is nonempty.
		Therefore, if $V(S_e) \cap V(S_f)$ is nonempty, then every edge in $S_f$ (and thus, $S_e$) has an endpoint in $V(S_e) \cap V(S_f)$.   
	\end{proof}

	\begin{lemma} \label{lem:notajoin}
		Let $G$ be a connected graph, and let $S_e$ and $S_f$ be equivalence classes of $\mathcal{C}_G$. 
		If $V(S_e) \setminus V(S_f)$, $V(S_f) \setminus V(S_e)$, and $V(S_e) \cap V(S_f)$ are all nonempty, then none of those three vertex sets induces a join.
	\end{lemma}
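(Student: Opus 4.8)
The plan is to exploit the fact that each of the three sets is one ``side'' of a bipartite equivalence class. Set $A = V(S_e)\setminus V(S_f)$, $B=V(S_f)\setminus V(S_e)$, and $C=V(S_e)\cap V(S_f)$, and for a generic target set $X$ let $S=S_e$ when $X\in\{A,C\}$ and $S=S_f$ when $X=B$. In every case $X\subseteq V(S)$, and by Theorem~\ref{thm:redmiddle} together with Corollary~\ref{cor:allcross} each edge of $S$ has exactly one endpoint in $X$ (its other endpoint lying in $C$ when $X\in\{A,B\}$, and in $A$ when $X=C$). Thus $S$ is a set of edges running between $X$ and a disjoint ``opposite side''. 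I would prove the three statements at once by showing that if $G[X]$ were a join then the single equivalence class $S$ would have to split, contradicting its connectivity.

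The key ingredient is a linkage description of the classes. From the definition of $S_e$ as the unique minimum element of $\mathcal{S}_e$ (Corollary~\ref{lem:fewestunique}) and property~(2) of $\mathcal{S}_e$, one checks that $S_e$ is exactly the set of edges reachable from $e$ through chains of edges in which consecutive members share a vertex and together induce a copy of $P_3$: the reachable set lies in $\mathcal{S}_e$ and, by property~(2), is contained in every member of $\mathcal{S}_e$, so it equals $S_e$ (this is the same connectivity already used in Corollary~\ref{cor:connected}). I would then assign to each edge of $S$ a \emph{label} recording which part of the supposed join the $X$-endpoint of that edge belongs to, and argue that this label is invariant along any linkage chain.

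The crux is the label-invariance, which I would establish by a two-case analysis of a single linkage step. Suppose $G[X]$ is a join with partition $X=X_1\cup X_2$ into nonempty parts, so that every $X_1$--$X_2$ pair is adjacent. Two distinct incident edges of $S$ share exactly one vertex, and since each such edge has exactly one endpoint in $X$, they share either their $X$-endpoint or their opposite endpoint. If they share their $X$-endpoint, their labels agree automatically. If they share their opposite endpoint $y$, say the edges are $x y$ and $x' y$ with $x,x'\in X$, then forming an induced $P_3$ requires $xx'\notin E(G)$; were the labels different we would have $x\in X_1$, $x'\in X_2$ and hence $xx'\in E(G)$ by the join, a contradiction. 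Hence every linkage step preserves the label, so all of $S$ carries a single label. But $X_1$ and $X_2$ are nonempty subsets of $V(S)$, and every vertex of $V(S)$ is an endpoint of an edge of $S$, so edges of both labels occur in $S$; this contradiction shows $G[X]$ is not a join. Applying this with $S=S_e$ for $X=A$ and $X=C$, and $S=S_f$ for $X=B$, completes the argument.

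I expect the main obstacle to be the first sentence of the second paragraph: pinning down, from the minimality definition of $S_e$, that the class is precisely the induced-$P_3$ linkage closure, since the rest of the proof depends on being able to pass between any two edges of the class by single incidence steps that each form an induced $P_3$. A secondary point requiring care is verifying that the two-case split of a linkage step is exhaustive, which relies on every edge of $S$ being an $X$-to-opposite-side edge so that two incident class-edges cannot share both of their endpoints' sides.
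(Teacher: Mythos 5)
Your proof is correct, and its mechanics differ from the paper's in two respects. The paper handles the sets in two separate passes ($V(S_e)\setminus V(S_f)$ together with its symmetric twin, then $V(S_e)\cap V(S_f)$), and in each pass the fact that no induced $P_3$ crosses the relevant partition of the class is supplied by Lemma~\ref{lem:allred}: the second join-part is joined not only to the first part but also to the opposite side ($V(S_e)\cap V(S_f)$, respectively $V(S_e)\setminus V(S_f)$), so any potential crossing $P_3$ has a chord; Theorem~\ref{thm:redmiddle} is never invoked there. You instead use Theorem~\ref{thm:redmiddle} together with Corollary~\ref{cor:allcross} to establish up front that each class is bipartite between the target set $X$ and its opposite side, which lets you treat all three sets uniformly with a single label-invariance argument whose only geometric input is the join's internal adjacency ($xx'\in E(G)$ for $x,x'$ in different parts) --- Lemma~\ref{lem:allred} is not needed at all. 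Underneath, both proofs reach the same contradiction: the supposed join forces a partition of a single equivalence class that no induced $P_3$ crosses, which is impossible because both join-parts meet $V(S)$ and the class is the minimum element of $\mathcal{S}_e$ (equivalently, is linkage-connected). Your explicit verification that $S_e$ equals the induced-$P_3$ linkage closure --- the step you flagged as the main obstacle --- is sound (the closure lies in $\mathcal{S}_e$ and, by property~(2), is contained in every member of $\mathcal{S}_e$, hence equals the minimum), and it is the same mechanism the paper uses implicitly in Corollary~\ref{cor:connected}; your exhaustiveness check for the two linkage cases also goes through, since $X$ and its opposite side are disjoint sets.
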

	
	\begin{proof}
		We will first prove that $V(S_e) \setminus V(S_f)$ (and thus, $V(S_f) \setminus V(S_e)$) does not induce a join.  
		By Corollary~\ref{cor:allcross}, every edge in $S_e$ has an endpoint in $V(S_e) \cap V(S_f)$. 
		Toward a contradiction, suppose that the subgraph induced by $V(S_e) \setminus V(S_f)$ is a join. 
		Call the two joined vertex sets $A$ and $B$, so that $A \cup B = V(S_e) \setminus V(S_f)$. 
		Suppose, without loss of generality, that $e$ has an endpoint in $A$. 
		We know that $G[S_e]$ is a connected graph, by Corollary~\ref{cor:connected}, and that $B$ is joined not only to $A$, but also to $A \cup (V(S_e) \cap V(S_f))$. 
		Hence there cannot exist an induced copy of $P_3$ with one edge having both endpoints in $A \cup (V(S_e) \cap V(S_f))$ and the other edge having one endpoint in $B$. Thus, no edge with an endpoint in $B$ is in $S_e$. 
		This is a contradiction. 
		Neither $V(S_e) \setminus V(S_f)$ nor $V(S_f) \setminus V(S_e)$ is a join. 
		
		Now we prove that $V(S_e) \cap V(S_f)$ does not induce a join using a similar argument.
		By Corollary~\ref{cor:allcross}, every edge in $S_e$ has an endpoint in $V(S_e) \cap V(S_f)$. 
		Toward a contradiction, suppose that the subgraph induced by $V(S_e) \cap V(S_f)$ is a join. 
		Call the two joined vertex sets $A ^\prime$ and $B ^\prime$, so that $A ^\prime \cup B ^\prime = V(S_e) \cap V(S_f)$. 
		Suppose, without loss of generality, that $e$ has an endpoint in $A ^\prime$. 
		We know that $G[S_e]$ is a connected graph, by Corollary~\ref{cor:connected}, and that $B ^\prime$ is joined to not only $A ^\prime$, but also to $A ^\prime \cup (V(S_e) \setminus V(S_f))$. 
		Hence there cannot exist an induced copy of $P_3$ with one edge having both endpoints in $A ^\prime \cup (V(S_e) \setminus V(S_f))$ and the other edge having one endpoint in $B ^\prime$. 
		Thus, no edge with an endpoint in $B ^\prime$ is in $S_e$. 
		This is a contradiction. 
		Therefore, $V(S_e) \cap V(S_f)$ is not a join. 
	\end{proof}
	
	\begin{lemma}\label{lem:allblack}
	Let $G$ be a connected graph and let $S_e$ and $S_f$ be equivalence classes of $\mathcal{C}_G$. 
	If $V(S_e) \setminus V(S_f)$, $V(S_f) \setminus V(S_e)$, and $V(S_e) \cap V(S_f)$ are all nonempty, then every edge with one endpoint in $V(S_e) \setminus V(S_f)$ and one endpoint in $V(S_f) \setminus V(S_e)$ belongs to the same equivalence class of $\mathcal{C}_G$, call it $S_d$.
\end{lemma}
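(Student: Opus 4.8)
The plan is to reduce the statement to a connectivity argument about the \emph{crossing edges}, those with one endpoint in each of $V(S_e)\setminus V(S_f)$ and $V(S_f)\setminus V(S_e)$. Write $A = V(S_e)\setminus V(S_f)$, $B = V(S_f)\setminus V(S_e)$, and $C = V(S_e)\cap V(S_f)$. By Lemma~\ref{lem:allred}, every vertex of $A$ is adjacent to every vertex of $B\cup C$ and every vertex of $B$ is adjacent to every vertex of $A\cup C$; in particular each pair $a\in A$, $b\in B$ is joined by an edge, so the crossing edges are exactly $\{ab : a\in A,\ b\in B\}$. By Corollary~\ref{cor:allcross}, no crossing edge lies in $S_e$ or $S_f$, since a crossing edge has no endpoint in $C$.

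First I would record how a crossing edge can participate in an induced copy of $P_3$. If $ab$ is a crossing edge and $ax$ is incident to it at $a$, then $b\text{--}a\text{--}x$ is induced only if $bx\notin E(G)$; since $b$ is adjacent to every vertex of $A\cup C$, this forces $x\in B$, so the partner edge is again crossing. Symmetrically, an induced $P_3$ extending $ab$ at $b$ must have its other edge of the form $a'b$ with $a'\in A$ and $aa'\notin E(G)$. Combined with Corollary~\ref{cor:dichromaticargument} (two incident edges in different classes force the third edge of the triangle to exist, hence incident edges forming an induced $P_3$ lie in the same class), this yields two \emph{moves}: $S_{ab}=S_{ab'}$ whenever $bb'\notin E(G)$, and $S_{ab}=S_{a'b}$ whenever $aa'\notin E(G)$.

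The heart of the argument is to connect any two crossing edges $a_1b_1$ and $a_2b_2$ by a chain of such moves. I would do this in two stages: first change the $A$-endpoint from $a_1$ to $a_2$ with $b_1$ fixed, then change the $B$-endpoint from $b_1$ to $b_2$ with $a_2$ fixed. The first stage is possible precisely when there is a path $a_1=z_0,z_1,\dots,z_m=a_2$ in $A$ all of whose consecutive pairs are \emph{non}-edges of $G$, that is, a path in the complement of $G[A]$; each step $z_iz_{i+1}\notin E(G)$ gives $S_{z_ib_1}=S_{z_{i+1}b_1}$. Such a path exists because a graph is a join of two nonempty parts if and only if its complement is disconnected, and Lemma~\ref{lem:notajoin} guarantees that $A$ does not induce a join, so the complement of $G[A]$ is connected (trivially so when $|A|=1$). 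The identical reasoning, with Lemma~\ref{lem:notajoin} applied to $B$, completes the second stage.

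Combining the two stages gives $S_{a_1b_1}=S_{a_2b_1}=S_{a_2b_2}$, so all crossing edges lie in a common class $S_d$. The main obstacle I expect is exactly the situation driving the third paragraph: when there are genuine edges inside $A$ or inside $B$, the naive one-step connection $b_1\text{--}a\text{--}b_2$ need not be an induced $P_3$, and one must instead route through other crossing edges. Recognizing that the admissible detours are governed by the complements of $G[A]$ and $G[B]$, and that the connectivity of those complements is precisely the content of the non-join Lemma~\ref{lem:notajoin}, is the key step that makes the argument go through.
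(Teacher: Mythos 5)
Your proof is correct and takes essentially the same route as the paper's: both propagate equality of equivalence classes through induced copies of $P_3$ centred at an endpoint of a crossing edge, using Lemma~\ref{lem:allred} to guarantee the centre is adjacent to both ends and Lemma~\ref{lem:notajoin} to supply the needed non-adjacencies. The differences are organizational rather than substantive: the paper fixes $v \in V(S_e)\setminus V(S_f)$ and greedily covers $V(S_f)\setminus V(S_e)$ by induction (leaving the globalization over the other endpoint implicit in ``it will be sufficient''), whereas you recast the non-join condition as connectivity of the complements of $G[A]$ and $G[B]$ and connect two arbitrary crossing edges by an explicit two-stage chain, which is a slightly cleaner packaging of the same idea.
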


\begin{proof}
	Let $G$ be a connected graph and let $S_e$ and $S_f$ be equivalence classes of $\mathcal{C}_G$. 
	Suppose that $V(S_e) \setminus V(S_f)$, $V(S_f) \setminus V(S_e)$, and $V(S_e) \cap V(S_f)$ are all nonempty.
	We know from Lemma~\ref{lem:allred} that $V(S_e) \setminus V(S_f)$ is joined to $V(S_f) \setminus V(S_e)$ and that none of these edges belong to $S_e$ or $S_f$. 
	Let $v \in V(S_e) \setminus V(S_f)$. 
	It will be sufficient to prove that every $vx$, where $x$ is a vertex in $V(S_f) \setminus V(S_e)$, belongs to the same equivalence class. 
	Suppose $vw_0$ is in some equivalence class $S_d$ for some $w_0$ in $V(S_f) \setminus V(S_e)$. 
	By Lemma~\ref{lem:notajoin}, we know that $V(S_f) \setminus V(S_e)$ is not a join.
	Therefore, there exists some vertex $w_1$ in $V(S_f) \setminus V(S_e)$ that is not adjacent to $w_0$.
	Since $w_0vw_1$ is an induced copy of $P_3$, both $vw_0$ and $vw_1$ belong to the same equivalence class.
	Thus, $vw_1$ is in $S_d$.
	By induction, suppose that $v$ is adjacent to $k$ different vertices in $V(S_f) \setminus V(S_e)$, $w_0, \dots, w_k$, with edges in $S_d$.
	Let $G_k$ be the subgraph induced by $\{w_0, \dots, w_k, v\}$.
	Since $V(S_f) \setminus V(S_e)$ is not a join, if there exists a vertex in $V(S_f) \setminus V(S_e)$ that is not in $G_k$, then there exists such a vertex that is not adjacent to every vertex in $G_k$.
	Let $w_{k+1}$ be such a vertex and let $w_k$ be a vertex in $G_k$ not adjacent to $w_{k+1}$.
	Since $w_k$ is not adjacent to $w_{k+1}$, the edges $vw_k$ and $vw_{k+1}$ belong to the same equivalence class, $S_d$.
	Therefore for all $x$ in $V(S_f) \setminus V(S_e)$, $vx$ belongs to $S_d$.
	Therefore, every edge with one endpoint in $V(S_e) \setminus V(S_f)$ and one endpoint in $V(S_f) \setminus V(S_e)$ belongs to the same equivalence class.
\end{proof}

	\begin{theorem}\label{thm:completetripartite}
		If $G$ is a connected graph, $\mathcal{C}_G$ has exactly three equivalence classes, $S_e$ and $S_f$ are distinct equivalence classes of $\mathcal{C}_G$, and $V(S_e) \setminus V(S_f)$, $V(S_f) \setminus V(S_e)$, and $V(S_e) \cap V(S_f)$ are all nonempty, then $G$ is complete tripartite.
	\end{theorem}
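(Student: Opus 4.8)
The plan is to exhibit an explicit tripartition of $V(G)$ and verify directly that it realises $G$ as a complete tripartite graph. Write $A = V(S_e) \setminus V(S_f)$, $B = V(S_f) \setminus V(S_e)$, and $C = V(S_e) \cap V(S_f)$; by hypothesis all three are nonempty. First I would pin down the three classes. By Lemma~\ref{lem:allblack}, every edge with one endpoint in $A$ and one in $B$ lies in a single class, call it $S_d$; since $A$ and $B$ are nonempty and completely joined by Lemma~\ref{lem:allred}, at least one such edge exists, so $S_d$ is a genuine class. By Corollary~\ref{cor:allcross} every edge of $S_e$ and of $S_f$ meets $C$, whereas an $A$--$B$ edge does not, so $S_d$ is distinct from both $S_e$ and $S_f$. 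As $\mathcal{C}_G$ has exactly three classes, they are precisely $S_e, S_f, S_d$, and every edge of $G$ lies in one of them.

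Next I would record that all cross-part pairs are completely joined and that $V(G) = A \cup B \cup C$. The joins come for free from Lemma~\ref{lem:allred}: every vertex of $A$ is adjacent to every vertex of $V(S_f) = B \cup C$, and every vertex of $B$ is adjacent to every vertex of $V(S_e) = A \cup C$, so the $A$--$B$, $A$--$C$, and $B$--$C$ pairs are all completely joined. For the vertex set I must rule out a vertex $p$ incident only to $S_d$-edges, i.e. $p \notin V(S_e) \cup V(S_f)$. I would do this by applying Lemma~\ref{lem:allred} and Theorem~\ref{thm:redmiddle} to the pair $(S_e, S_d)$, for which $p \in V(S_d) \setminus V(S_e)$, $A \subseteq V(S_e) \cap V(S_d)$, and $B \subseteq V(S_d) \setminus V(S_e)$, deriving that such a $p$ would be forced to be adjacent to all of $C$ and thereby create structure incompatible with having only three classes. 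I expect this bookkeeping to be the more delicate of the two set-up steps, so I would treat it carefully before moving on.

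The crux is to show that each of $A$, $B$, $C$ is independent; granting this, $G$ is complete tripartite with parts $A, B, C$. I would argue that an internal edge spawns a fourth equivalence class, contradicting the hypothesis. Recall that the classes are exactly the connected components of the graph on $E(G)$ in which two edges are joined when they form an induced copy of $P_3$. Suppose $xy$ is an edge with both endpoints in one part, say $x, y \in A$. Because every vertex of $A$ is adjacent to all of $B \cup C$ by Lemma~\ref{lem:allred} and $V(G) = A \cup B \cup C$, every non-neighbour of an $A$-vertex again lies in $A$; hence any edge forming an induced $P_3$ with $xy$ shares $x$ or $y$ and has its far endpoint a non-neighbour of the other endpoint, forcing that far endpoint into $A$. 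Iterating, the entire component of $xy$ consists of edges internal to $A$. This component is an equivalence class all of whose edges avoid $B$ and $C$, so it cannot equal $S_e$ or $S_f$ (whose edges meet $C$ by Corollary~\ref{cor:allcross}) and cannot equal $S_d$ (which contains the $A$--$B$ edges, each meeting $B$); it is therefore a fourth class, a contradiction. The same argument, using that every $C$-vertex is adjacent to all of $A \cup B$ by Lemma~\ref{lem:allred}, rules out internal edges of $C$ (complementing Theorem~\ref{thm:redmiddle}, which already excludes internal $C$-edges from $S_e$ and $S_f$), and a symmetric argument handles $B$.

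With all three parts independent and all three cross pairs completely joined, $G$ is exactly the complete tripartite graph with parts $A$, $B$, and $C$, as claimed. The principal obstacle I anticipate is the independence step, and within it the verification that an internal edge's induced-$P_3$ component stays confined to a single part; this is precisely where Lemma~\ref{lem:allred}, together with the equality $V(G) = A \cup B \cup C$, does the essential work, and it is also the only place where the full force of the \emph{exactly three classes} hypothesis is consumed, via the appearance of a forbidden fourth class.
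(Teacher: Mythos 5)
Your proposal follows the same route as the paper's proof: Lemma~\ref{lem:allred} and Lemma~\ref{lem:allblack} supply the cross-joins and the third class $S_d$ of $A$--$B$ edges, and internal edges are eliminated by showing that the induced-$P_3$ component of such an edge would be a fourth equivalence class, distinct from $S_e$ and $S_f$ (via Corollary~\ref{cor:allcross} and Theorem~\ref{thm:redmiddle}) and from $S_d$. Your treatment of that independence step is correct and in fact more explicit than the paper's about why the component of an internal edge never leaves its part. The one place you go beyond the paper is the claim $V(G) = A \cup B \cup C$: the paper's proof never addresses it, even though its assertion that ``any induced copy of $P_3$ which contains $z$ must only contain edges in $V(S_e) \setminus V(S_f)$'' tacitly assumes that every non-neighbour of an endpoint of $z$ lies in $A \cup B \cup C$. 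You were right to isolate this as a required step, and right that your step 4 consumes it.

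However, your plan for that step has a genuine gap. Lemma~\ref{lem:allred} and Theorem~\ref{thm:redmiddle} applied to the pair $(S_e, S_d)$ require all three of $V(S_e) \setminus V(S_d)$, $V(S_d) \setminus V(S_e)$, and $V(S_e) \cap V(S_d)$ to be nonempty, and the first of these is not available to you; worse, it is self-defeating. Every edge incident with $p$ lies in $S_d$ (it cannot lie in $S_e$ or $S_f$ because $p \notin V(S_e) \cup V(S_f)$, and there are only three classes), so if Lemma~\ref{lem:allred} did apply to $(S_e, S_d)$, then $p$ would be joined to all of $V(S_e)$ by $S_d$-edges, forcing $V(S_e) \subseteq V(S_d)$ and contradicting the nonemptiness you just invoked. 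Thus your sketch only refutes the case $V(S_e) \setminus V(S_d) \neq \emptyset$, and the case $V(S_e) \subseteq V(S_d)$ --- the only one compatible with the existence of $p$ --- is left open. The step can be closed with the paper's tools as follows. Since $G$ is connected, some vertex $p \notin V(S_e) \cup V(S_f)$ has a neighbour in $A \cup B \cup C$; as every edge at $p$ lies in $S_d$, Corollary~\ref{cor:dichromaticargument} applied to an edge at $p$ together with an $S_e$- or $S_f$-edge at its far endpoint forces a further edge from $p$, and induction along $S_e$- and $S_f$-paths (exactly as in the proof of Lemma~\ref{lem:allred}, using Corollary~\ref{cor:connected}) shows every such $p$ is adjacent to all of $A \cup B \cup C$. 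Then $H = G[A \cup B \cup C]$ satisfies the hypotheses of Lemma~\ref{lem:ECHF1F2}, so for an $A$--$B$ edge $ab$ we get $S_d = S_{ab} \subseteq E(H)$; but the edge from $p$ into $H$ lies in $S_d$ and not in $E(H)$, a contradiction. With that repair, your argument is complete and coincides with (indeed tightens) the paper's proof.
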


	\begin{proof}
	Let $G$ be a connected graph, $\mathcal{C}_G$ have exactly three equivalence classes, $S_e$ and $S_f$ be distinct equivalence classes of $\mathcal{C}_G$, and $V(S_e) \setminus V(S_f)$, $V(S_f) \setminus V(S_e)$, and $V(S_e) \cap V(S_f)$ all be nonempty.
	By Lemma~\ref{lem:allred}, if $a$ is a vertex in $V(S_e) \setminus V(S_f)$ and $b$ is a vertex in $V(S_f) \setminus V(S_e)$, then $ab$ is an edge in $E(G)$.
	By Lemma~\ref{lem:allblack}, every such edge belongs to the same equivalence class, call it $S_{ab}$.
	By Lemma~\ref{lem:allred}, every vertex in $V(S_e) \cap V(S_f)$ is adjacent to every vertex in $V(S_e) \setminus V(S_f)$ and every vertex in $V(S_f) \setminus V(S_e)$.
	Now all that remains to be shown is that there are no edges in $E(G)$ such that both endpoints are in $V(S_e) \setminus V(S_f)$, $V(S_f) \setminus V(S_e)$, or $V(S_e) \cap V(S_f)$.
	Toward a contradiction, suppose that an edge $z$ exists with both endpoints in $V(S_e) \setminus V(S_f)$.
	If $z$ does not exist in an induced copy of $P_3$, then $z$ is itself a fourth equivalence class and this would be a contradiction. 
	So $z$ exists in some induced copy of $P_3$.
	Both endpoints of $z$ are adjacent to every vertex in $V(S_f) \setminus V(S_e)$ and $V(S_e) \cap V(S_f)$. 
	Therefore, any induced copy of $P_3$ which contains $z$ must only contain edges in $V(S_e) \setminus V(S_f)$.
	Thus no edge with both endpoints in $V(S_e) \setminus V(S_f)$ is in $S_e$, $S_f$, or $S_{ab}$.
	This is a contradiction since $G$ has exactly three equivalence classes.
	Therefore, no edge exists with both endpoints in $V(S_e) \setminus V(S_f)$ (or equivalently, $V(S_f) \setminus V(S_e)$).
	
	Now toward a contradiction, suppose that an edge $y$ exists with both endpoints in $V(S_e) \cap V(S_f)$.
	If $y$ does not exist in an induced copy of $P_3$, then $y$ is itself a fourth equivalence class and this would be a contradiction. 
	So $y$ exists in some induced copy of $P_3$.
	Both endpoints of $y$ are adjacent to every vertex in $V(S_f) \setminus V(S_e)$ and $V(S_e) \setminus V(S_f)$. 
	Therefore, any induced copy of $P_3$ which contains $y$ must only contain edges in $V(S_e) \cap V(S_f)$.
	Thus no edge with both endpoints in $V(S_e) \cap V(S_f)$ is in $S_e$, $S_f$, or $S_{ab}$.
	This is a contradiction since $G$ has exactly three equivalence classes.
	Therefore, no edge exists with both endpoints in $V(S_e) \cap V(S_f)$.
	So $G$ is complete tripartite.
	\end{proof}

Graphs with exactly three equivalence classes are classified as follows.

\begin{theorem}\label{thm:3Equiv}
	Let $G$ be a connected graph such that $\mathcal{C}_G$ has exactly three equivalence classes, $S_d$, $S_e$, and $S_f$. 
	If $G$ is not complete tripartite, then there exists $e \in E(G)$ so that $V(S_e) = V(G)$.
\end{theorem}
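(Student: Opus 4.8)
The plan is to prove the statement directly: assuming $G$ is connected with exactly the three classes $S_d,S_e,S_f$ and is \emph{not} complete tripartite, I will locate one class whose vertex set is all of $V(G)$. First note that every vertex of $G$ is an endpoint of some edge and every edge lies in one of $S_d,S_e,S_f$, so $V(S_d)\cup V(S_e)\cup V(S_f)=V(G)$, and each of these three vertex sets is nonempty since each class contains at least one edge. Because $G$ is not complete tripartite, Theorem~\ref{thm:completetripartite} forbids any pair of the three classes from having both of its differences and its intersection simultaneously nonempty; hence for every pair of classes at least one of these three associated sets is empty.

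Next I record the intersection pattern of the three vertex sets on the index set $\{d,e,f\}$, declaring two indices \emph{linked} when the corresponding vertex sets meet. The key observation is that if some class, say $S_d$, had $V(S_d)$ disjoint from $V(S_e)\cup V(S_f)$, then no edge could join $V(S_d)$ to the rest of $G$: any such edge would lie in a class $S_g$ placing both of its endpoints into $V(S_g)$, and each of $g\in\{d,e,f\}$ then forces a vertex into an empty intersection, a contradiction. This would disconnect $G$. Consequently the linking pattern on $\{d,e,f\}$ is connected, so it is either a triangle (all three pairs meet) or a path (exactly two pairs meet).

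For any linked pair the nonempty set among the three is the intersection, so the empty one must be a difference; that is, one of the two vertex sets contains the other. In the triangle case all three vertex sets are pairwise comparable under inclusion and therefore form a chain, whose largest member contains the other two and thus equals $V(G)$. In the path case, say $d\!-\!e\!-\!f$ with $V(S_d)\cap V(S_f)=\emptyset$, I argue the middle class swallows both ends: if instead $V(S_e)\subseteq V(S_d)$, then $V(S_e)\cap V(S_f)\subseteq V(S_d)\cap V(S_f)=\emptyset$, contradicting that $e$ and $f$ are linked, so $V(S_d)\subseteq V(S_e)$, and symmetrically $V(S_f)\subseteq V(S_e)$; hence $V(S_e)=V(S_d)\cup V(S_e)\cup V(S_f)=V(G)$. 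Either way some class has vertex set $V(G)$, as required.

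I expect the main obstacle to be the path-case bookkeeping — correctly identifying which class is central and ruling out the reversed containments — rather than the triangle case, which is an immediate chain argument. The connectivity-of-the-pattern step is short but load-bearing, and it is the place where the hypothesis that $G$ is connected actually enters the proof; Lemma~\ref{lem:noteveryec} may be cited to note that distinct classes have distinct vertex sets, making the inclusions strict, although the argument does not require strictness.
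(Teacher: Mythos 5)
Your proof is correct and follows essentially the same route as the paper's: invoke Theorem~\ref{thm:completetripartite} to conclude that every pair of classes has vertex sets that are either disjoint or nested, then use connectivity of $G$ to force one class's vertex set to contain the other two, hence all of $V(G)$. Your write-up is in fact more careful than the paper's at the two points it leaves implicit — the argument that no class's vertex set can be disjoint from the union of the other two (via which class a crossing edge would belong to), and the explicit triangle/path case analysis of the linking pattern.
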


\begin{proof}
	Let $G$ be a connected graph such that $\mathcal{C}_G$ has exactly three equivalence classes.
	Since $G$ is connected, every equivalence class $S_x$ must be such that $V(S_x) \cap V(S_y) \neq \emptyset$ for some other equivalence class $S_y$. 
	Suppose that for every pair of equivalence classes $S_x$ and $S_y$, either $V(S_x) \setminus V(S_y)$, $V(S_y) \setminus V(S_x)$, or $V(S_x) \cap V(S_y)$ is empty, because otherwise by Theorem~\ref{thm:completetripartite}, $G$ is complete tripartite. 
	This implies that for every pair of equivalence classes, $S_x$ and $S_y$, either the intersection of their vertex sets is empty or one vertex set is a subset of the other.
	Since $G$ is connected, some pair of equivalence classes, $S_e$ and $S_f$, are such that $V(S_f) \subseteq V(S_e)$.
	Let $S_d$ be the other equivalence class.
	Since $G$ is connected, either $V(S_d)$ is a subset of $V(S_e)$ or $V(S_e)$ is a subset of $V(S_d)$.
	In either case, there exists an equivalence class for which the vertex set contains the vertex sets of all other equivalence classes.
	This implies there exists some equivalence class such that the vertex set contains every vertex in $G$.
\end{proof}

We provide the following infinite family of graphs that are not complete tripartite and have exactly three equivalence classes.
Let $P_k = v_0,v_1,\dots, v_{k-1}$ and $P^\prime_k = v^\prime_0,v^\prime_1,\dots, v^\prime_{k-1}$ be disjoint paths.
Let $G$ be the graph formed from $P_k$ and $P^\prime_k$ by adding a universal vertex $v$.
We claim $G$ has exactly three equivalence classes.

For each $0 \leq i,j \leq k-1$, the edge $v_iv_j^\prime$ does not exist.
Therefore the edges incident with $v$ are in the same equivalence class because of the induced copies of $P_3$ that exist.

Since every vertex not in $P_k$ is adjacent to either every vertex in $P_k$ or no vertex in $P_k$, by Lemma \ref{lem:ECHF1F2}, the equivalence class of any edge in $P_k$ contains only edges of $P_k$.
Therefore $S_{v_iv_{i+1}} = E(P_k)$ for all $0 \leq i \leq k-2$.
A similar argument implies  $S_{v^\prime_iv^\prime_i+1} = E(P^\prime_k)$ for all $0 \leq i \leq k-2$.
Therefore $E(G)/\mathcal{C} = \{ S_{v_0v_1}, S_{v^\prime_0v^\prime_1}, S_{uv_0} \}$.

Notice that in the statement of Theorem \ref{thm:3Equiv}, $uv_0$ plays the role of $e$.

\section{Uniquely Quasi-Transitively Orientable Graphs}\label{sec:QTOrient}
The statement of Corollary \ref{cor:noForbidden} hints at a significant difference between the family of quasi-transitively orientable graphs and \colourable\ graphs.
Recall that the family of quasi-transitively orientable graphs is equal to the family of comparability graphs (Theorem~\ref{thm:ghouila}).
Such graphs admit a forbidden subgraph characterisation \cite{H94} and thus can be identified in polynomial time.
On the other hand, by Theorem \ref{thm:HF1F2} and Corollary \ref{cor:noForbidden}, no such forbidden subgraph characterisation of \colourable\ graphs exists.
However, one may verify that the conditions of Theorem \ref{thm:HF1F2} can be checked in polynomial time.

In this section we restrict our attention to the family of comparability graphs.
Using techniques similar to those in Section \ref{sec:SECHF1F2} we find that sets of the form $S_e$ arise as equivalence classes in a relation related to quasi-transitive orientations of comparability graphs.

Let $G$ be a comparability graph and consider $uv \in E(G)$.
As $G$ is a comparability graph, there exists at least one quasi-transitive orientation of $G$ for which the edge $uv$ is oriented to have its head at $v$.
Note that if $vw \in E(G)$ and $uw \notin E(G)$, then necessarily, the edge $vw$ is oriented with its head at $v$ whenever $uv$ is oriented with its head at $v$. 

A graph $G$ is \textit{uniquely quasi-transitively orientable} if there exist exactly two quasi-transitive orientations of $G$. Given one of these two orientations, the other can be created by reversing the direction of each arc.
A \textit{partial quasi-transitive orientation generated by $\overrightarrow {uv}$ in $G$} is a quasi-transitive orientation of a largest subgraph (most edges) of $G$ that is uniquely quasi-transitively orientable, in which $\overrightarrow {uv}$ is an arc.
In Figure~\ref{fig:QTExample1}, we see the graph $G$ from Figure~\ref{fig:QT2EC} with a quasi-transitive orientation of the edges and we also see a partial quasi-transitive orientation generated by an edge in $G$.

\begin{figure}[]
	\centering	
	\begin{tikzpicture}[-,-=stealth', auto,node distance=1.5cm,
		thick,scale=0.8, main node/.style={scale=0.8,circle,draw,font=\sffamily\Large\bfseries}]
		
		\node[main node] (1) 					    {$v_1$};			
		\node[main node] (2)  [right = 2cm of 1]        {$v_2$};
		\node[main node] (3)  [below right = 2cm and 1cm of 2]        {$v_3$};  
		\node[main node] (4)  [below left = 2cm and 1cm of 3]	      {$v_4$};				
		\node[main node] (5)  [left = 2cm of 4]        {$v_5$};
		\node[main node] (6)  [above left = 2cm and 1cm of 5]        {$v_6$};
		
		\node[main node] (7)  [right = 4cm of 2]					    {$v_1$};			
		\node[main node] (8)  [right = 2cm of 7]        {$v_2$};
		\node[main node] (9)  [below right = 2cm and 1cm of 8]        {$v_3$};  
		\node[main node] (10)  [below left = 2cm and 1cm of 9]	      {$v_4$};				
		\node[main node] (11)  [left = 2cm of 10]        {$v_5$};
		\node[main node] (12)  [above left = 2cm and 1cm of 11]        {$v_6$};	
		
		

		
		\draw[->]
		(6) edge (2)
		(5) edge (4)
		(1) edge (4)
		(3) edge (2)
		(3) edge (4)
		(6) edge (4)
		(1) edge (2)
		(5) edge (2)
		
		(7) edge (10)
		(12) edge (8)
		(9) edge (8)
		(9) edge (10)
		(12) edge (10)
		(7) edge (8);
		

		\draw [->]
		(1) edge (5)
		(3) edge (5)
		(1) edge (6)
		(6) edge (5);
		
		
	\end{tikzpicture}
	\caption{Quasi-transitive orientation $Q$ of a graph $G$ and a partial quasi-transitive orientation of $Q$ generated by an arc.}
	\label{fig:QTExample1}	
\end{figure}
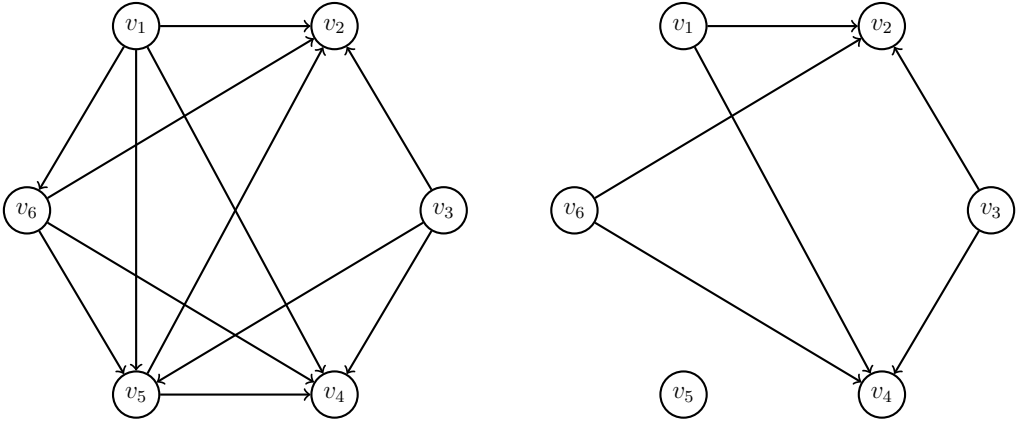

\begin{theorem}\label{thm:uniquePQTOGUV}
	For all comparability graphs $G$, if $uv \in E(G)$, then 
	
	\begin{itemize}
		\item there exists a unique partial quasi-transitive orientation generated by $\overrightarrow{uv}$ and
		\item the set of edges that are directed in the unique partial quasi-transitive orientation generated by $\overrightarrow{uv}$ is the equivalence class $S_{uv}$ of $\mathcal{C}_G$.
	\end{itemize}
\end{theorem}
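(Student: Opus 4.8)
The plan is to identify the unique largest uniquely quasi-transitively orientable subgraph of $G$ carrying $\overrightarrow{uv}$ with the subgraph $G[S_{uv}]$, which proves both bullets simultaneously. Concretely, I would establish three things: (i) fixing $\overrightarrow{uv}$ forces a determined orientation on every edge of $S_{uv}$; (ii) this forced orientation is quasi-transitive on $G[S_{uv}]$, so that $G[S_{uv}]$ is uniquely quasi-transitively orientable; and (iii) no edge outside $S_{uv}$ is forced by $\overrightarrow{uv}$, so that enlarging the subgraph beyond $S_{uv}$ destroys unique orientability. Since $G$ is a comparability graph, by Theorem~\ref{thm:ghouila} it admits a quasi-transitive orientation, and after possibly reversing all arcs I may fix one, say $\sigma$, with $u\to v$; this $\sigma$ serves both as a witness that the forced orientation is realizable and as a source of comparison orientations.

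For (i), recall from the construction of $S_{uv}$ in Section~\ref{sec:SECHF1F2} (cf.\ the proofs of Corollary~\ref{cor:connected} and Lemma~\ref{lem:ECHF1F2}) that every edge of $S_{uv}$ lies on a path starting at $uv$ in which each consecutive pair of edges forms an induced copy of $P_3$. The observation preceding this theorem says that in any quasi-transitive orientation the two edges of an induced $P_3$ must agree at their shared vertex (both oriented toward it or both away), since otherwise a directed $2$-path would force the missing edge to exist. Inducting along such a path, the orientation of $\overrightarrow{uv}$ determines the orientation of each successive edge, and hence of every edge of $S_{uv}$; well-definedness is guaranteed by the existence of $\sigma$. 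Consequently $G[S_{uv}]$ has at most two quasi-transitive orientations, this one and its reverse.

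For (ii), I would show that $\sigma$ restricted to $S_{uv}$ is quasi-transitive on $G[S_{uv}]$, which with (i) yields exactly two orientations. This can only fail through an induced directed $2$-path $x\to y\to z$ with $xy,yz\in S_{uv}$ whose closing edge is absent from $G[S_{uv}]$. If $xz\notin E(G)$ then $xyz$ is an induced $P_3$, so by the forcing of (i) the edges $xy,yz$ must agree at $y$, contradicting the directed path; hence $xz\in E(G)$. The crux is then to rule out $xz\in E(G)\setminus S_{uv}$, that is, to show the closing edge of such a forced directed $2$-path already lies in $S_{uv}$. I expect this to be the main obstacle: it is a triangle-type statement about the equivalence classes, and I would attack it by following the chain of induced copies of $P_3$ that places $xy$ and $yz$ in the common class $S_{uv}$ and repeatedly applying Corollary~\ref{cor:dichromaticargument} to propagate the edge $xz$ along that chain, an argument in the spirit of Golumbic's Triangle Lemma for implication classes of comparability graphs.

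For (iii), I would first characterize $S_{uv}$ as exactly the set of edges whose orientation is forced by $\overrightarrow{uv}$. Part (i) gives one inclusion; for the reverse I would show that each $f\notin S_{uv}$ is unforced by exhibiting two quasi-transitive orientations of $G$ that agree on $\overrightarrow{uv}$ but differ on $f$. Any uniquely quasi-transitively orientable subgraph carrying $\overrightarrow{uv}$ can then contain only forced edges, hence sits inside $G[S_{uv}]$; together with (ii) this forces equality, giving both the uniqueness of the largest such subgraph and the identification of its edge set with $S_{uv}$, so that the partial quasi-transitive orientation generated by $\overrightarrow{uv}$ is precisely the unique orientation of $G[S_{uv}]$ containing $\overrightarrow{uv}$. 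The delicate point in (iii) is producing those two orientations differing only on $f$, since distinct equivalence classes of $\mathcal{C}_G$ (cf.\ Theorem~\ref{thm:equivalencerelation2}) need not be orientable wholly independently; I would resolve it by working with a transitive orientation of $G$ and reversing along the class of $f$ while holding $S_{uv}$ fixed, a step that interfaces with the section's correspondence between single-class comparability graphs and unique orientability.
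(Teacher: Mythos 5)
Your proposal follows the same skeleton as the paper's own proof: fix a quasi-transitive orientation of $G$ containing $\overrightarrow{uv}$ (available by Theorem~\ref{thm:ghouila}), propagate forced orientations along chains of induced copies of $P_3$ to cover all of $S_{uv}$, identify the forced set with $S_{uv}$ via the minimal closed set and Theorem~\ref{thm:equivalencerelation2}, and then argue maximality because edges outside $S_{uv}$ are not forced. Your step (iii) is in fact tighter than what the paper does: the paper dismisses edges outside $S_{uv}$ with a one-line remark about a partition of the edge set that no induced $P_3$ crosses, whereas your class-reversal construction can be completed cleanly (reversing every arc of a class $S_f \neq S_{uv}$ preserves quasi-transitivity, since any directed $2$-path missing its closing edge in $G$ would be an induced $P_3$, and an induced $P_3$ has both or neither of its edges reversed), and it directly exhibits two quasi-transitive orientations of $G$ agreeing on $\overrightarrow{uv}$ but differing on $f$.

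The one place your proposal is not yet a proof is the crux you yourself flag in (ii): that if $xy, yz \in S_{uv}$ and the forced orientation gives $x \to y \to z$, then the closing edge $xz$ (which exists in $G$ by quasi-transitivity of the ambient orientation) lies in $S_{uv}$ rather than in another class. This is a genuine, nontrivial fact---it is essentially Golumbic's Triangle Lemma, i.e., the statement that implication classes of a comparability graph transitively orient their colour classes---and ``repeatedly applying Corollary~\ref{cor:dichromaticargument} along the chain'' is a plausible plan, not an argument. Without it you cannot conclude that $G[S_{uv}]$ is quasi-transitively orientable at all (it is generally not an induced subgraph of $G$, so the comparability property does not pass to it hereditarily), hence not that it is \emph{uniquely} quasi-transitively orientable, which the definition of a partial quasi-transitive orientation generated by $\overrightarrow{uv}$ requires. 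You should know, however, that the paper's proof never addresses this point either: it establishes the forcing, invokes Corollary~\ref{lem:fewestunique} for uniqueness of the minimal closed set, and asserts the conclusion, without ever verifying that $G[S_{uv}]$ is uniquely quasi-transitively orientable as a standalone graph. So your attempt is at least as complete as the published argument, and the step you left open is a hole in both.
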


\begin{proof}
	Suppose $G$ is a comparability graph and that $uv$ is chosen to be directed $\overrightarrow {uv}$ in a quasi-transitive orientation $X$ of $G$. 
	Every induced $P_3$ of the form $uvw$ is such that $\overrightarrow{wv}$ is an arc in $X$. 
	Also for every $P_3$ that contains either $uv$ or $vw$, the direction of the other edge is the same in all partial quasi-transitive orientations generated by $\overrightarrow{uv}$. 
	Let $S$ be a minimal subset of $E(G)$ which contains $uv$ and is such that every induced $P_3$ either has both or neither of its edges in $S$.
	By Lemma~\ref{lem:fewestunique}, $S$ is the only smallest subset of $E(G)$ which contains $uv$ and is such that every induced $P_3$ either has both or neither of its edges in $S$.
	No partial quasi-transitive orientation generated by $\overrightarrow{uv}$ can contain any edges not contained in $S$ because there would exist a partition of the edge set into two sets such that no induced copy of $P_3$ includes an edge from both sets.
	Thus $S$ is unique, and by Theorem~\ref{thm:equivalencerelation2}, the set $S$ is equal to the equivalence class $S_{uv}$ of $\mathcal{C}_G$.
\end{proof}	

We will denote the unique partial quasi-transitive orientation generated by $\overrightarrow{uv}$ in $G$ by $\Gamma_{\overrightarrow{uv}}$.
Let $\mathcal{O}_G$ be the relation on $E(G)$ so that $uv \sim xy$ when any of the following are true:
\begin{itemize}
	\item $\Gamma_{\overrightarrow{uv}} = \Gamma_{\overrightarrow{xy}}$
	\item $\Gamma_{\overrightarrow{uv}} = \Gamma_{\overrightarrow{yx}}$
	\item $\Gamma_{\overrightarrow{vu}} = \Gamma_{\overrightarrow{xy}}$
	\item $\Gamma_{\overrightarrow{vu}} = \Gamma_{\overrightarrow{yx}}$.
\end{itemize} 
Given a graph $G$, the relation $\mathcal{O}_G$ is an equivalence relation because both $\mathcal{O}_G$ and $\mathcal{C}_G$ partition the edges of $G$ in the same manner.

\begin{corollary}
	Let $G$ be a comparability graph. If $S_e$ is an equivalence class of $\mathcal{O}_G$, then $G[S_e]$ is uniquely quasi-transitively orientable.
\end{corollary}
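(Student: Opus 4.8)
The plan is to read the statement off directly from Theorem~\ref{thm:uniquePQTOGUV} together with the definition of a partial quasi-transitive orientation, so the argument is essentially a translation of that theorem into the language of the induced-on-edge-set subgraph. First I would recall, from the discussion immediately preceding the corollary, that $\mathcal{O}_G$ and $\mathcal{C}_G$ induce the same partition of $E(G)$; hence every equivalence class $S_e$ of $\mathcal{O}_G$ is also a class of $\mathcal{C}_G$, and in particular it is nonempty (it contains $e$), so $G[S_e]$ has at least one edge.

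Next I would fix a representative edge $uv \in S_e$, so that $S_{uv} = S_e$, and invoke Theorem~\ref{thm:uniquePQTOGUV}. That theorem supplies a \emph{unique} partial quasi-transitive orientation $\Gamma_{\overrightarrow{uv}}$ generated by $\overrightarrow{uv}$ and identifies the set of edges it directs as exactly $S_{uv} = S_e$. Consequently, the subgraph of $G$ that $\Gamma_{\overrightarrow{uv}}$ orients has edge set $S_e$ and vertex set $V(S_e)$, which is precisely $G[S_e]$.

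The conclusion is then immediate from the definition: a partial quasi-transitive orientation generated by $\overrightarrow{uv}$ is, by definition, a quasi-transitive orientation of a largest subgraph of $G$ that is uniquely quasi-transitively orientable and in which $\overrightarrow{uv}$ is an arc. Since that subgraph is $G[S_e]$, it follows at once that $G[S_e]$ is uniquely quasi-transitively orientable; its two quasi-transitive orientations are $\Gamma_{\overrightarrow{uv}}$ and the orientation obtained by reversing every arc, matching the ``exactly two'' in the definition of unique quasi-transitive orientability.

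Because the argument is so short, there is no deep obstacle; the one point deserving care is the identification of the subgraph oriented by $\Gamma_{\overrightarrow{uv}}$ with $G[S_e]$. This requires the oriented edge set to be \emph{exactly} $S_e$, with no superfluous edges or extraneous isolated vertices, which is precisely the content of the second bullet of Theorem~\ref{thm:uniquePQTOGUV}. Once that identification is made, the property of being uniquely quasi-transitively orientable is already built into the definition of the partial orientation, so no further argument is needed.
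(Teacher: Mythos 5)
Your proof is correct and takes essentially the same approach the paper intends: the paper states this corollary without proof precisely because it follows immediately from Theorem~\ref{thm:uniquePQTOGUV} together with the definition of a partial quasi-transitive orientation, which is exactly the unwinding you carry out. The one point you flag for care---that the subgraph oriented by $\Gamma_{\overrightarrow{uv}}$ is exactly $G[S_e]$, with edge set $S_{uv}=S_e$ and no extraneous vertices---is indeed the only identification needed, and it is supplied by the second bullet of that theorem together with the paper's convention that $G[E]$ has vertex set $V(E)$.
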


\begin{corollary}
	For all integers $k \geq 1$, there exists a graph $G$ with $k$ equivalence classes under the relation $\mathcal{O}_G$.	
\end{corollary}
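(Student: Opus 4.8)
The plan is to reduce the statement to Theorem~\ref{thm:threshold} together with the observation, recorded immediately before the statement, that for a comparability graph $G$ the relations $\mathcal{O}_G$ and $\mathcal{C}_G$ partition $E(G)$ in exactly the same way. Because the two partitions coincide, they have the same number of blocks, so it suffices to produce, for each $k \geq 1$, a \emph{comparability} graph $G$ whose relation $\mathcal{C}_G$ has exactly $k$ equivalence classes. The count of $\mathcal{O}_G$-classes will then equal $k$ automatically.

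First I would invoke Theorem~\ref{thm:threshold}. Its proof already constructs, for every even $n$, a threshold graph $G_n$ for which $\mathcal{C}_{G_n}$ has exactly $\frac{n}{2}$ equivalence classes. Setting $n = 2k$ yields a graph $G_{2k}$ with precisely $k$ classes of $\mathcal{C}_{G_{2k}}$, so the only remaining issue is to certify that this witness lies in the class of graphs on which $\mathcal{O}_G$ is even defined, namely the comparability graphs.

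This certification is the one place where the restriction to comparability graphs has any force, and it is where I expect the (minor) work to lie. The graphs $G_n$ are threshold graphs, and threshold graphs contain no induced $P_4$, hence they are cographs. Cographs are built from single vertices by disjoint union and join, and each such operation preserves being a comparability graph (the disjoint union of two partial orders, and the ordinal sum of two partial orders, realize respectively the union and the join of the associated comparability graphs). By induction every cograph, and in particular every $G_n$, is a comparability graph, so $\mathcal{O}_{G_n}$ is defined. Taking $G = G_{2k}$ and appealing to the equality of the $\mathcal{O}_G$- and $\mathcal{C}_G$-partitions then gives a graph with exactly $k$ equivalence classes under $\mathcal{O}_G$, completing the argument. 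The main obstacle is purely the comparability verification; the class count itself is inherited verbatim from Theorem~\ref{thm:threshold}.
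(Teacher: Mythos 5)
Your proposal is correct and takes essentially the same route as the paper: invoke Theorem~\ref{thm:threshold} to obtain a graph whose relation $\mathcal{C}_G$ has exactly $k$ equivalence classes, then transfer the count to $\mathcal{O}_G$ using the fact that the two relations induce the same partition of $E(G)$. Your only addition is the explicit check that the threshold graphs $G_{2k}$ are comparability graphs (threshold $\Rightarrow$ $P_4$-free $\Rightarrow$ cograph $\Rightarrow$ comparability), a step the paper's proof leaves implicit but which is indeed needed for $\mathcal{O}_G$ to be defined.
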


\begin{proof}
	By Theorem~\ref{thm:threshold} and Theorem~\ref{thm:uniquePQTOGUV}, for all integers $k \geq 1$, there exists a graph $G$ with $k$ equivalence classes under the relation $\mathcal{C}_G$ and both $\mathcal{C}_G$ and $\mathcal{O}_G$ partition the edges of $G$ in the same manner. 
	Therefore, for all integers $k \geq 1$, there exists a graph $G$ with $k$ equivalence classes under the relation $\mathcal{O}_G$.
\end{proof}

\begin{theorem}\label{thm:countingOrientations}
	If $G$ is a comparability graph so that $\mathcal{C}_G$ has $k$ equivalence classes, then there are $2^{k}$ quasi-transitive orientations of $G$.
\end{theorem}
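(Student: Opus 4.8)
The plan is to exhibit a bijection between the quasi-transitive orientations of $G$ and the $2^{k}$ ways of independently orienting each equivalence class. Write the equivalence classes of $\mathcal{C}_G$ (equivalently of $\mathcal{O}_G$) as $S_{e_1},\dots,S_{e_k}$; we may assume $G$ is connected, the general case following by taking products over the components. By the corollary preceding this theorem, each induced subgraph $G[S_{e_i}]$ is uniquely quasi-transitively orientable, hence has exactly two quasi-transitive orientations, say $\Gamma_i$ and its reverse $\overline{\Gamma_i}$. Since the classes partition $E(G)$, any choice of one orientation from $\{\Gamma_i,\overline{\Gamma_i}\}$ for each $i$ assembles into a single orientation of $G$; call the resulting map $\Phi$ from $\{0,1\}^k$ to the set of orientations of $G$. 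The goal is to show that $\Phi$ is injective and that its image is exactly the set of quasi-transitive orientations of $G$.

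First I would prove that every $\Phi(b)$ is quasi-transitive. Suppose $\Phi(b)$ contained a directed $2$-path $u\to v\to w$ with $uw\notin E(G)$; then $uvw$ is an induced copy of $P_3$, so by Corollary~\ref{cor:dichromaticargument} the edges $uv$ and $vw$ lie in the same class $S_{e_i}$. Then $u\to v\to w$ would be a directed $2$-path inside the restriction of $\Phi(b)$ to $G[S_{e_i}]$, which is $\Gamma_i$ or $\overline{\Gamma_i}$ and hence quasi-transitive; but since $E(G[S_{e_i}])\subseteq E(G)$ and $uw\notin E(G)$, we have $uw\notin E(G[S_{e_i}])$, a contradiction. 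So no such $2$-path exists and $\Phi(b)$ is quasi-transitive. Injectivity is then immediate: distinct vectors differ in some coordinate $i$, and because $S_{e_i}$ is nonempty and $\Gamma_i,\overline{\Gamma_i}$ disagree on every edge of $S_{e_i}$, the assembled orientations differ.

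The remaining, and most delicate, step is surjectivity onto the quasi-transitive orientations: I would show an arbitrary quasi-transitive orientation $X$ of $G$ equals $\Phi(b)$ for some $b$. Fix a class $S_{e_i}$ together with the direction $X$ assigns to $e_i$. Using the forcing already exploited in the proof of Theorem~\ref{thm:uniquePQTOGUV} --- namely that for an induced copy of $P_3$ whose two edges both lie in $S_{e_i}$, quasi-transitivity forces those edges to point both toward or both away from the shared vertex --- I would propagate the orientation of $e_i$ along the chains of induced $P_3$'s joining $e_i$ to every other edge of $S_{e_i}$, which exist because $S_{e_i}$ is the minimal set containing $e_i$ closed under induced-$P_3$ incidence. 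Thus $X$ restricted to $S_{e_i}$ is completely determined by the single arc at $e_i$, so it coincides with $\Gamma_i$ or $\overline{\Gamma_i}$; recording these choices over all $i$ yields a vector $b$ with $\Phi(b)=X$.

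Combining the three steps, $\Phi$ is a bijection from $\{0,1\}^k$ onto the set of quasi-transitive orientations of $G$, giving exactly $2^{k}$ such orientations. The crux --- the point I expect to require the most care --- is surjectivity: one must argue that the induced-$P_3$ forcing genuinely reaches every edge of a class, so that a global quasi-transitive orientation cannot deviate from one of the two local orientations $\Gamma_i,\overline{\Gamma_i}$ on any class. This argument mirrors the count in Corollary~\ref{cor:countingColourings} for $2$-edge-colourings, with Theorem~\ref{thm:uniquePQTOGUV} playing the role that Theorem~\ref{thm:equivalencerelation2} plays there.
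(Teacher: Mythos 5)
Your proof is correct and, at bottom, it performs the same count as the paper: two possible orientations per equivalence class, chosen independently over the $k$ classes. The difference is in what gets made explicit. The paper's own proof is four sentences: it cites Theorem~\ref{thm:uniquePQTOGUV} (each arc generates a unique partial quasi-transitive orientation whose edge set is exactly its class $S_{uv}$), concludes that each class admits two orientations within any quasi-transitive orientation, and jumps to $2^k$ --- silently assuming the choices across classes are independent, i.e.\ that every one of the $2^k$ combinations assembles into a quasi-transitive orientation of $G$. Your proposal proves exactly that elided step: by Corollary~\ref{cor:dichromaticargument} no induced copy of $P_3$ has its two edges in different classes, so any quasi-transitivity violation in an assembled orientation $\Phi(b)$ would have to lie inside a single class, where it is ruled out because the restriction there is $\Gamma_i$ or $\overline{\Gamma_i}$, quasi-transitive on $G[S_{e_i}]$. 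Conversely, your surjectivity step re-derives, rather than cites, the forcing argument that is the content of Theorem~\ref{thm:uniquePQTOGUV}; the one point there worth writing out in full is that a $G$-induced $P_3$ with both edges in $S_{e_i}$ remains induced in $G[S_{e_i}]$, so $\Gamma_i$ obeys the same alignment constraints as the restriction of $X$, and the two restrictions the forcing permits are therefore precisely $\Gamma_i$ and $\overline{\Gamma_i}$. In short, the paper buys brevity by leaning entirely on Theorem~\ref{thm:uniquePQTOGUV}, while your explicit bijection is longer but self-contained and closes the independence (multiplicativity) gap that the paper's proof glosses over.
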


\begin{proof}
	For all graphs $G$, the number of equivalence classes of $\mathcal{C}_G$ is equal to the number of equivalence classes of $\mathcal{O}_G$.
	For every arc $\overrightarrow{uv}$, there exists a unique partial quasi-transitive orientation generated by $\overrightarrow{uv}$ by Theorem~\ref{thm:uniquePQTOGUV}.
	Since in any graph orientation, every edge must be assigned one of two directions, for each equivalence class in $G$, there exist two possible orientations of those edges that can exist in a given quasi-transitive orientation of $G$.
	Therefore, there exist $2^k$ quasi-transitive orientations of $G$, where $k$ is the number of equivalence classes.

\end{proof}

\begin{corollary}
	A comparability graph $G$ is uniquely quasi-transitively orientable if and only if it is not \colourable.
\end{corollary}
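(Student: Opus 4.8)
The plan is to reduce both sides of the stated equivalence to a single numerical invariant, namely the number $k$ of equivalence classes of the relation $\mathcal{C}_G$. Both ``uniquely quasi-transitively orientable'' and ``\colourable'' have, at this point, already been shown to be governed by $k$, so the corollary should fall out by directly comparing the two counting results.

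First I would handle the orientation side. By Theorem~\ref{thm:countingOrientations}, a comparability graph $G$ whose relation $\mathcal{C}_G$ has $k$ equivalence classes admits exactly $2^{k}$ quasi-transitive orientations, and this applies precisely because $G$ is assumed to be a comparability graph. By definition, $G$ is uniquely quasi-transitively orientable exactly when it has exactly two quasi-transitive orientations. Hence $G$ is uniquely quasi-transitively orientable if and only if $2^{k} = 2$, that is, if and only if $k = 1$.

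Next I would handle the colouring side. By Corollary~\ref{cor:countingColourings} (together with Theorem~\ref{thm:equivalencerelation2}, which identifies the classes of $\mathcal{C}_G$ with the sets $S_e$), $G$ admits exactly $2^{k}$ quasi-transitive $2$-edge-colourings, exactly two of which are the trivial (monochromatic) all-$R$ and all-$B$ colourings. Thus a \emph{nontrivial} quasi-transitive $2$-edge-colouring exists if and only if $2^{k} > 2$, i.e. if and only if $k \geq 2$; equivalently, $G$ fails to be \colourable\ if and only if $k = 1$. This same dichotomy also follows directly from Theorem~\ref{thm:HF1F2}, which characterises colourability as having at least two equivalence classes.

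Combining the two reductions gives $G$ uniquely quasi-transitively orientable $\iff k = 1 \iff G$ not \colourable, which is the assertion. The argument is essentially a bookkeeping comparison of the two counting theorems, so I expect no substantive obstacle; the only points requiring care are confirming that the degenerate count $2^{k}=2$ corresponds exactly to the two trivial colourings (so that absence of a nontrivial colouring is equivalent to $k=1$), and noting that the hypothesis that $G$ is a comparability graph is precisely what licenses the use of Theorem~\ref{thm:countingOrientations} on the orientation side.
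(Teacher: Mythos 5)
Your proof is correct and follows essentially the same route as the paper, which states this corollary without proof as an immediate consequence of Theorem~\ref{thm:countingOrientations} and Corollary~\ref{cor:countingColourings}: both properties reduce to whether $\mathcal{C}_G$ has exactly one equivalence class. Your explicit bookkeeping (uniquely orientable $\iff 2^k=2 \iff k=1 \iff$ no nontrivial colouring) is exactly the intended argument.
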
	

By the definitions of $S_{uv}$ and $[\overrightarrow{uv}]_{\mathcal{O}}$, Theorem \ref{thm:uniquePQTOGUV} tells us $S_{uv}$ is the largest set of edges such that for every quasi-transitive orientation $X$ of $G$ and every edge $ab \in S_{uv}$, if $\overrightarrow{ab}$ and $\overrightarrow{uv}$ are arcs in $X$, then $\overrightarrow{ab}$ is an arc in every quasi-transitive orientation of $G$ in which $\overrightarrow{uv}$ is an arc.
In Figure~\ref{fig:QTExample1}, the image on the right shows the set of oriented edges as exactly those of the set $S_{v_1v_2}$.

By Theorem \ref{thm:equivalencerelation2} and Theorem \ref{thm:uniquePQTOGUV}, we have $|E(G)/\mathcal{C}| = | E(G)/\mathcal{O}|$.
So we have the following result.
\begin{theorem}
	A comparability graph $G$ is uniquely quasi-transitively orientable if and only if $G$ admits only the trivial quasi-transitive $2$-edge-colouring.
\end{theorem}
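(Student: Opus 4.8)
The plan is to reduce both sides of the biconditional to a single numerical condition on $k := |E(G)/\mathcal{C}|$, the number of equivalence classes of $\mathcal{C}_G$, and then to invoke the two parallel counting results already in hand. This is natural because the sentence immediately preceding the statement records $|E(G)/\mathcal{C}| = |E(G)/\mathcal{O}|$, so the same integer $k$ governs both orientations and colourings.

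First I would unpack the orientation side in terms of $k$. By Theorem~\ref{thm:countingOrientations}, a comparability graph $G$ has exactly $2^{k}$ quasi-transitive orientations. By definition, $G$ is uniquely quasi-transitively orientable precisely when it has exactly two quasi-transitive orientations. Hence $G$ is uniquely quasi-transitively orientable if and only if $2^{k} = 2$, that is, if and only if $k = 1$.

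Second I would unpack the colouring side. By Corollary~\ref{cor:countingColourings} there are exactly $2^{k}$ quasi-transitive $2$-edge-colourings of $G$. The two monochromatic colourings always exist, and every nonmonochromatic colouring is, by definition, nontrivial; therefore ``$G$ admits only the trivial quasi-transitive $2$-edge-colouring'' is equivalent to ``$G$ has exactly two quasi-transitive $2$-edge-colourings,'' i.e. $2^{k} = 2$, i.e. $k = 1$. (Equivalently, one may phrase this as: $G$ is not \colourable\ iff $\mathcal{C}_G$ has a single equivalence class, via Theorem~\ref{thm:equivalencerelation2} and Theorem~\ref{thm:HF1F2}.) Chaining the two equivalences through the shared condition $k = 1$ then yields the theorem.

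The argument is short, so the only point requiring care is making the counting hypotheses apply. Corollary~\ref{cor:countingColourings} is stated for connected graphs, so I would either add a connectivity assumption (mirroring the corollary stated just before this theorem) or handle the general case by components, observing that both the partition $\mathcal{C}_G$ and the set of quasi-transitive orientations factor over the connected components, so the count remains $2^{k}$ and the reduction to $k = 1$ is unaffected. The hard part, such as it is, will be bookkeeping: confirming that ``uniquely orientable'' (exactly two orientations) and ``only the trivial colouring'' (exactly two colourings) are genuinely captured by the single equation $2^{k} = 2$, which rests on the identity $|E(G)/\mathcal{C}| = |E(G)/\mathcal{O}|$ noted immediately above.
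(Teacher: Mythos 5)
Your proposal is correct and takes essentially the same route as the paper: the paper's (very terse) justification is precisely the identity $|E(G)/\mathcal{C}| = |E(G)/\mathcal{O}|$ combined with the two counting results (Theorem~\ref{thm:countingOrientations} and Corollary~\ref{cor:countingColourings}), which reduces both ``exactly two quasi-transitive orientations'' and ``only trivial quasi-transitive $2$-edge-colourings'' to the single condition that $\mathcal{C}_G$ has one equivalence class. Your explicit bookkeeping, including the handling of connectivity by components, only spells out details the paper leaves implicit.
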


 \section{Conclusions and Future Work}

Though the respective literature yields significant examples of commonalities in the study of oriented graphs and $2$-edge-coloured graphs, our work here highlights a fundamental difference between these two relational structures.
As with many concepts under study in oriented and $2$-edge-coloured graphs, the difference between the definitions of quasi-transitivity for oriented graphs and for $2$-edge-coloured graphs is only slight.
However in this case, the resulting classification is markedly different.

Such a phenomenon also occurs with the study of chromatic polynomials of oriented and $2$-edge-coloured graphs.
In \cite{C19}, Cox and Duffy fully classified those oriented graphs whose oriented chromatic polynomial was identical to the chromatic polynomials of the underlying simple graph as quasi-transitive orientations of co-interval graphs, notably a class of graphs for which the chromatic polynomial can be computed in polynomial time \cite{G77}.
In the sequel for $2$-edge-coloured graphs, Beaton, Cox, Duffy and Zolkavich \cite{B20} obtain only a partial classification of $2$-edge-coloured graphs whose $2$-edge-coloured chromatic polynomial is identical to the chromatic polynomial of the underlying simple graph.
Here, work was stymied by a paucity of research results on quasi-transitive $2$-edge-coloured graphs.
However, their preliminary results suggest that those $2$-edge-coloured graphs for which the $2$-edge-coloured chromatic polynomial is identical to the chromatic polynomial of the underlying simple graph will comprise a family of graphs for which computing the chromatic polynomial is NP-hard.
We expect that our results, particularly Theorem \ref{thm:HF1F2}, will provide the insight needed to complete this classification.

\bibliographystyle{abbrv}
\bibliography{references}

\end{document}